\newcommand{\R}{\mathbb{R}}
\newcommand{\mH}{\mathbb{H}}
\newcommand{\mL}{\mathcal{L}}
\newcommand{\CH}{\mathbb{H}_{\mathbb{C}}}
\newcommand{\vp}{\varphi}
\newcommand{\I}{\mathcal{I}}
\newcommand{\dU}{\Delta U}
\newcommand{\dv}{\Delta v}
\newcommand{\dch}{\Delta\chi}
\newcommand{\dps}{\Delta\psi}
\newcommand{\al}{\alpha}
\newcommand{\be}{\beta}
\newcommand{\de}{\delta}
\newcommand{\la}{\lambda}
\newcommand{\pr}{\prime}
\newcommand{\Ga}{\Gamma}
\newcommand{\ga}{\gamma}
\newcommand{\mA}{\mathcal{A}}
\newcommand{\ep}{\epsilon}
\newcommand{\mC}{\mathcal{C}}
\newcommand{\mW}{\mathcal{W}}
\newcommand{\Om}{\Omega}
\newcommand{\lan}{\langle}
\newcommand{\ran}{\rangle}
\newcommand{\M}{\mathcal{M}}
\newcommand{\om}{\omega}
\newcommand{\ti}{\tilde}
\newcommand{\si}{\sigma}
\newtheorem{theorem}{Theorem}[section]
\newtheorem{proposition}[theorem]{Propostion}
\newtheorem{remark}[theorem]{Remark}
\theoremstyle{definition}
\theoremstyle{plain}
\newtheorem{lemma}[theorem]{Lemma}
\numberwithin{equation}{section}
\begin{document}

%
%

\pagestyle{headings}
\renewcommand{\headrulewidth}{0.4pt}

\title{\textbf{Convexity of reduced energy and mass angular momentum inequalities}}
\author{Richard Schoen and Xin Zhou\footnote{The first author was partially supported by NSF grant DMS-1105323}\\}
\date{}
\maketitle


\begin{abstract}
In this paper, we extend the work in \cite{D}\cite{ChrusLiWe}\cite{ChrusCo}\cite{Co}. We weaken 
the asymptotic conditions on the second fundamental form, and we also give an $L^{6}-$norm bound for the difference between general data and Extreme Kerr data or Extreme Kerr-Newman data by proving convexity of the renormalized Dirichlet energy when the target has non-positive curvature. In particular, we give the first proof of the strict mass/angular momentum/charge
inequality for axisymmetric Einstein/Maxwell data which is not identical with the extreme Kerr-Newman solution. 
\end{abstract}



\section{Introduction}\label{introduction}
An interesting question about solutions of the Einstein equations is whether the angular
momentum (and charge for the Einstein/Maxwell case) can be bounded by the mass for 
physically reasonable solutions. This is
true for the Kerr and Kerr-Newman black hole solutions which are stationary. For dynamical, axisymmetric
solutions some general results have been obtained, first by S. Dain \cite{D} and later by
other authors \cite{ChrusLiWe}\cite{ChrusCo}\cite{Co} over the past several years. In this 
paper we introduce a new method for obtaining such inequalities which is technically
simpler and which provides sharper results in many cases. We apply this method to both the vacuum black hole case and to the Einstein/Maxwell black hole case. An interesting feature of
our method is that it provides a quantitative lower bound on the gap in the inequality
in terms of an $L^6$ measure of the distance between the dynamical solution and the
comparison stationary solution. As such it readily handles the borderline case, and
provides an extremal characterization of the Kerr and Kerr-Newman solutions. In this paper
we deal with the reduction of the initial data to a mapping and we state our theorems in
terms of the mapping. For the corresponding statements in terms of physical quantities we
refer to Theorem 1.1 of \cite{ChrusLiWe} for the vacuum case and to Theorem 1.1 of \cite{Co}
for the Einstein/Maxwell case.

It is well known that the Dirichlet energy for mappings from compact manifolds into negatively curved Riemannian manifolds has a strong convexity property along geodesic deformations \cite{S}. Here we will prove a similar convexity result for the normalized Dirichlet energy of certain singular mappings to negatively curved Riemannian manifold arising from mathematical general relativity (see \cite{D}\cite{We}\cite{ChrusLiWe}). We will use this convexity to show that singular harmonic maps are unique in a class of maps with finite reduced energy and the same asymptotic singular behavior. Moreover, we can control the $L^{6}$ norm of the distance between any such map and the singular harmonic map by the reduced energy gap.

On $\mathbb{R}^{3}$, we use $(\rho, \varphi, z)$ to denote cylindrical coordinates, and $(r, \theta, \phi)$ to denote spherical coordinates. We use $\Gamma$ to denote the $z-$axis which is given by $\{\rho\equiv 0\}$. We define $g$ by
\begin{equation}
g=2\log\rho,
\end{equation}
and note that $g$ is the potential of a uniform charge distribution on $\Gamma$. In particular $g$ is  harmonic on $\mathbb{R}^{3}\setminus\Gamma$. Now we are interested in the mapping $(X, Y): \Om\subset\mathbb{R}^{3}\rightarrow\mathbb{H}^{2}$, where $\mathbb{H}^{2}=\{(X, Y)\in\mathbb{R}^{2}, X>0\}$ is the hyperbolic right half plane with metric $ds^{2}_{-1}=\frac{dX^{2}+dY^{2}}{X^{2}}$. Since $X>0$, we can rewrite $X$ as $X=e^{g+x}$, or equivalently
\begin{equation}\label{x and X}
x=\log X-g.
\end{equation}
We are interested in the following functional discussed in \cite{D}.
\begin{equation}\label{reduced energy}
\mathcal{M}_{\Om}(x, Y)=\int_{\Om}|\partial x|^{2}+e^{-2g-2x}|\partial Y|^{2}d\mu.
\end{equation}
We denote $\M(x, Y)=\M_{\mathbb{R}^{3}}(x, Y)$. The motivation to study this functional is that the \emph{extreme Kerr Solution} of the Einstein vacuum equations gives rise to a local critical point of the above functional. The extreme Kerr solution corresponds to the map $(X_{0}, Y_{0})$, or equivalently $(x_{0}, Y_{0})$ where $x_{0}=\log X_{0}-g$, which in spherical coordinates, is given by (see \cite{D})
\begin{equation}\label{Extreme Kerr}
X_{0}=\big(\tilde{r}^{2}+|J|+\frac{2|J|^{3/2}\tilde{r}\sin^{2}\theta}{\Sigma}\big)\sin^{2}\theta,\ \ Y_{0}=2J(\cos^{3}\theta-3\cos\theta)-\frac{2J^{2}\cos\theta\sin^{4}\theta}{\Sigma},
\end{equation}
and
\begin{equation}
\tilde{r}=r+\sqrt{|J|},\ \ \Sigma=\tilde{r}^{2}+|J|\cos^{2}\theta,
\end{equation}
where the number $J$ corresponds to the angular momentum of the spacetime corresponding
to $(X_{0}, Y_{0})$.

Now we are interested in the class of $(x, Y)$ such that functional $\mathcal{M}$ in equation (\ref{reduced energy}) is well-defined, finite and physically corresponds to an axisymmetric initial data set for the vacuum Einstein equations\footnote{We refer this physical background to \cite{D} and \cite{D2}.}. In fact, we are interested in a class of data which can be written as variations of the Kerr solutions. Denote
\begin{equation}
x=x_{0}+\alpha,\ \ \ Y=Y_{0}+y.
\end{equation}
Let $\alpha\in H^{1}(\mathbb{R}^{3})$, which is the completion of $C^{\infty}_{c}(\mathbb{R}^{3}\setminus\{0\})$ under the norm
\begin{equation}\label{norm for x}
\|\alpha\|_{1}=\big(\int_{\mathbb{R}^{3}}|\partial\alpha|^{2}d\mu\big)^{1/2},
\end{equation}
and $y\in H^{1}_{0, X_{0}}(\mathbb{R}^{3}\setminus\Gamma)$, which is the completion of $C^{\infty}_{c}(\mathbb{R}^{3}\setminus\Gamma)$ under the norm
\begin{equation}\label{norm for y}
\|y\|_{1, X_{0}}=\big(\int_{\mathbb{R}^{3}}X_{0}^{-2}|\partial y|^{2}d\mu\big)^{1/2}.
\end{equation}
Here $d\mu$ denotes the Euclidean volume measure.

We will give a simplified proof of a strengthening of Theorem 1.2 of \cite{D}.
\begin{theorem}\label{main theorem}
The functional $\mathcal{M}(x, Y)$ achieves a global minimum at the Extreme Kerr solution 
$(x_{0}, Y_{0})$ over all $\{x=x_{0}+\alpha, Y=Y_{0}+y\}$, where 
$\alpha\in H^{1}(\mathbb{R}^{3})$ with $\alpha_{-}=\inf\{0, \alpha\}\in L^{\infty}(\mathbb{R}^{3})$, and $y\in H^{1}_{0, X_{0}}(\mathbb{R}^{3}\setminus\Gamma)$, that is, for any such $(x,Y)$
\begin{equation}\label{main inequality}
\mathcal{M}(x, Y)\geq\mathcal{M}(x_{0}, Y_{0}).
\end{equation}
Furthermore, we have the following gap bound,
\begin{equation}\label{gap inequality}
\mathcal{M}(x, Y)-\mathcal{M}(x_{0}, Y_{0})\geq C\{\int_{\R^{3}}d_{-1}^{6}\big((X, Y), (X_{0}, Y_{0})d\mu\big)\}^{1/3}
\end{equation}
where $d_{-1}(\cdot,\cdot)$ is the distance function on $\mathbb H^2$.
\end{theorem}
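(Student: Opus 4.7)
The plan is to exploit the non-positive curvature of the target $\mH^{2}$ through a geodesic homotopy argument. For each $p\in\R^{3}\setminus\Ga$, let $u_{t}(p)=(X_{t}(p),Y_{t}(p))$ denote the constant-speed hyperbolic geodesic in $\mH^{2}$ from $u_{0}(p)=(X_{0}(p),Y_{0}(p))$ to $u_{1}(p)=(X(p),Y(p))$, and set $x_{t}=\log X_{t}-g$. The hypothesis $\al_{-}\in L^{\infty}$ is used to show that the weight $e^{-2g-2x_{t}}$ remains uniformly controlled along this homotopy, so that each $(x_{t},Y_{t})$ stays in the admissible class with $x_{t}-x_{0}\in H^{1}(\R^{3})$ and $Y_{t}-Y_{0}\in H^{1}_{0,X_{0}}(\R^{3}\setminus\Ga)$.

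The heart of the argument is the convexity estimate
\begin{equation}
\frac{d^{2}}{dt^{2}}\M(x_{t},Y_{t}) \;\geq\; 2\int_{\R^{3}}|DV_{t}|^{2}\, d\mu \;\geq\; 2\int_{\R^{3}}|\partial\phi|^{2}\, d\mu,
\end{equation}
where $V_{t}=\partial_{t}u_{t}$ is the velocity of the homotopy and $\phi(p)=d_{-1}(u_{0}(p),u_{1}(p))$. The first inequality is the classical Bochner identity for the energy density along a geodesic homotopy, combined with the curvature sign $K_{\mH^{2}}=-1\leq 0$; the second is Kato's inequality applied to $|V_{t}|\equiv\phi$, which is constant in $t$ on a constant-speed geodesic. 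To pass from the NPC convexity of the ambient Dirichlet energy $E(u)=\int|\partial u|^{2}_{\mH^{2}}\, d\mu$ to that of $\M$, one uses the algebraic identity
\[
E(u_{t})=\int|\partial g|^{2}d\mu + 2\int\partial g\cdot \partial x_{t}\, d\mu + \M(x_{t},Y_{t}),
\]
in which the first term is independent of $t$ and the second, being linear in $x_{t}$ with harmonic coefficient $g$, contributes zero to the second $t$-derivative after an integration by parts; this is legitimate because the boundary contributions on $\Ga$ and at infinity vanish under the Sobolev hypotheses.

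Because $(x_{0},Y_{0})$ is the extreme Kerr harmonic map, the Euler--Lagrange equations of $\M$ (which coincide with the harmonic map equations for $u$ into $\mH^{2}$, using $\Delta g=0$ on $\R^{3}\setminus\Ga$) are satisfied at $(x_{0},Y_{0})$, so $\tfrac{d}{dt}\M(x_{t},Y_{t})|_{t=0}=0$. Integrating the convexity inequality twice yields
\[
\M(x,Y)-\M(x_{0},Y_{0})\;\geq\;\int_{\R^{3}}|\partial\phi|^{2}\, d\mu,
\]
which proves (\ref{main inequality}). The gap estimate (\ref{gap inequality}) follows from the Sobolev inequality on $\R^{3}$, namely $\|\phi\|_{L^{6}(\R^{3})}^{2}\leq C\|\partial\phi\|_{L^{2}(\R^{3})}^{2}$, provided $\phi$ decays at infinity, which is ensured by admissibility of $(x,Y)$. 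The main technical obstacle is to make all of the above rigorous in the presence of the singular set $\Ga$: one must verify that the geodesic homotopy preserves the admissible Sobolev class (where $\al_{-}\in L^{\infty}$ is essential to prevent $e^{-2g-2x_{t}}$ from blowing up), and legitimately discard the boundary contributions in the integration-by-parts argument both near $\Ga$ (where $g$ is singular) and at infinity.
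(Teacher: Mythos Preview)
Your overall strategy---geodesic homotopy in the NPC target, convexity of the energy, vanishing first variation at the harmonic map, then Sobolev---is exactly the paper's. But the execution you outline has a genuine gap, and the paper's proof supplies precisely the device you are missing.

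The identity you write,
\[
E(u_{t})=\int_{\R^{3}}|\partial g|^{2}\,d\mu+2\int_{\R^{3}}\partial g\cdot\partial x_{t}\,d\mu+\M(x_{t},Y_{t}),
\]
is not meaningful on $\R^{3}$: both $E(u_{t})$ and $\int|\partial g|^{2}$ are infinite. So one cannot ``subtract off'' the infinite part globally and hope the remainder behaves. The paper therefore works only on compact annuli $A_{R,\ep}=B_{R}\setminus B_{\ep}$ and uses the finite relation $E_{\Om}=\M_{\Om}+\int_{\partial\Om}\frac{\partial g}{\partial n}(g+2x_{t})\,d\sigma$ on domains $\Om\subset\R^{3}\setminus\Ga$. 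More seriously, your claim that the cross term has zero second $t$-derivative ``after an integration by parts'' relies on $\Delta g=0$, but $g=2\log\rho$ is harmonic only off $\Ga$; as a distribution on $\R^{3}$ its Laplacian is a nonzero measure on $\Ga$. The resulting boundary contribution on $\{\rho=\ep\}$ involves $x_{t}''=(\log X_{t})''=-X_{t}^{-2}(Y_{t}')^{2}$, which does \emph{not} vanish in general along a hyperbolic geodesic (it vanishes only when $Y_{t}$ is constant). There is no reason, under the bare hypotheses $\al\in H^{1}$, $y\in H^{1}_{0,X_{0}}$, that this term disappears.

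The paper's fix is to first approximate: take $\al_{n}\in C^{\infty}_{c}(A_{R_{n},\ep_{n}})$ and, crucially, $y_{n}\in C^{\infty}_{c}(\R^{3}\setminus\Ga)$ supported away from the axis. Then in a neighborhood of the cylinder $\mathcal C_{\ep}$ one has $Y=Y_{0}$, so the hyperbolic geodesic there is \emph{vertical}: $X_{t}=X_{0}e^{t\al_{n}}$, i.e.\ $x_{t}=x_{0}+t\al_{n}$ is genuinely linear in $t$. This---not the harmonicity of $g$---is what kills the second derivative of the boundary term. The domain is then split: on $\Om_{R,\ep}=A_{R,\ep}\setminus\mathcal C_{\ep}$ one uses the $E$--$\M$ relation (the boundary term on $\partial\mathcal C_{\ep}$ is now linear in $t$, and the terms on $\partial A_{R,\ep}$ vanish since $\al_{n}$ is compactly supported), while on $A_{R,\ep}\cap\mathcal C_{\ep}$ one computes $\M$ directly from the explicit formula $x_{t}=x_{0}+t\al_{n}$, $Y_{t}=Y_{0}$. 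Both pieces give the desired lower bound $2\int|\partial d_{-1}|^{2}$. Finally one passes to the limit; the condition $\al_{-}\in L^{\infty}$ is used not to control the weight along the homotopy as you suggest, but to show $d_{-1}\big((X_{n},Y_{n}),(X,Y)\big)\to 0$ a.e.\ (via the bound $e^{-\al}|y-y_{n}|/X_{0}$) so that Fatou's lemma applies to the $L^{6}$ side.
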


\begin{remark}
Here the condition $\alpha_{-}\in L^{\infty}$ is needed to insure that $\mathcal{M}(x, Y)$ to be finite for $y\in H^{1}_{0, X_{0}}$. We do not need the $L^{\infty}$ condition for $X_{0}^{-1}y$ which is assumed in Theorem 1.2 of \cite{D}, since we do not need to construct a minimizer of 
$\mathcal{M}$ in our proof. 
\end{remark}

In \cite{Chrus1}, P. Chru\'sciel generalized the class of axially symmetric initial data which admit a representation as a mapping to $\mathbb H^2$ and extended a theorem of D. Brill \cite{Brill} to prove the positive mass theorem for data in this class. The mass/angular momentum inequality for this class was obtained by P. Chru\'sciel, Y. Y. Li, and G. Weinstein \cite{ChrusLiWe}. In Section \ref{chrusciel} we extend our method to recover their theorem in a stronger form including
the gap estimate. This is done in Theorem \ref{main theorem2}. In addition to obtaining the
$L^6$ lower bound for the gap, we weaken the asymptotic assumptions, requiring the second
fundamental form $h$ to decay strictly faster than $r^{-3/2}$ while the results of \cite{ChrusLiWe}
require decay strictly faster than $r^{-5/2}$.

In Section \ref{e-m case} we apply our method to the case of Einstein/Maxwell black hole
data. In this case the target manifold for the associated mapping is the complex hyperbolic
plane $\mathbb H^2_{\mathbb C}$ (four real dimensions). In Theorem \ref{main theorem3}
we give an extension of Theorem \ref{main theorem} to bound the gap in the reduced energy between a general map to $\mathbb H^2_{\mathbb C}$ in an appropriate asymptotic class (see (\ref{conditions of dPsi in E/M case})) and the harmonic map corresponding to the extremal
Kerr-Neuman solution. In order to prove mass/angular momentum inequalities for black hole Einstein/Maxwell initial data, we extend our method in Section \ref{cc data} to cover a class of
initial data introduced by Chru\'sciel and J. Costa \cite{ChrusCo}, \cite{Co}. This requires
a careful examination of the asymptotic conditions which is given in \ref{asymptotic}.
The main theorem extending the results of \cite{ChrusCo} and \cite{Co} is Theorem \ref{main theorem4}. Our theorem includes a lower bound on the gap and therefore also implies the borderline case which gives a characterization of the Kerr-Newman solution. This does not
appear to follow from \cite{ChrusCo} and \cite{Co}.


\section{Convexity for $\mathcal{M}$}\label{convexity}

The motivation to study convexity properties of $\mathcal{M}$ comes from the relation between $\mathcal{M}$ and the Dirichlet energy $E$, which is defined for $(X, Y): \mathbb{R}^{3}\rightarrow\mathbb{H}^{2}$ by
\begin{equation}\label{harmonic energy}
E(X, Y)=\int_{\mathbb{R}^{3}}\frac{|\partial X|^{2}+|\partial Y|^{2}}{X^{2}}d\mu.
\end{equation}
Here $E$ is just the standard harmonic map energy\footnote{See definition and properties in \cite{S}} for mapping $(X, Y): \mathbb{R}^{3}\rightarrow\mathbb{H}^{2}$.

\subsection{Convexity of the Dirichlet energy}

Now let us first discuss a general result. Let $(M, g)$ be a general $n$ dimensional Riemannian manifold, and $\Omega\subset(M, g)$ an open subset with or without boundary. Let $(N, h)$ be a target Riemannian manifold, and $u_{0},\ u_{1}:\Omega\rightarrow(N, h)$ be $C^{2}$ mappings. Now connect them by a $C^{2}$ family of mappings $F: \Omega\times[0, 1]\rightarrow(N, h)$. We denote the energy restricted to maps on $\Omega$ by $E_{\Omega}$. We let $F_t$ denote the map with $t$ fixed, and we consider the second variation of the energy\footnote{The results went back to Section 3 of \cite{S}.} of $F_t$. Denote the variational vector field by 
$V=F_{*}(\frac{\partial}{\partial t})$, then we have the \textbf{second variation formula}:
\begin{equation}\label{second variation of E}
\begin{split}
\frac{d^{2}}{dt^{2}}E_{\Omega}(F_{t}) & =2\int_{\Omega}\big[\sum_{\alpha=1}^{n}\|\nabla^{N}_{(F_{t})_{*}(e_{\alpha})}V\|^{2}_{h}\\
                                      &-\sum_{\alpha=1}^{n}R^{N}(V, (F_{t})_{*}(e_{\alpha}), V, (F_{t})_{*}(e_{\alpha}))-div_{F(M)}(\nabla^{N}_{V}V)\big]dvol_{M},
\end{split}
\end{equation}
where $\{e_{\alpha}\}_{\alpha=1}^{n}$ is a local orthonormal basis on $(\Omega, g)$. So if the target manifold $(N, h)$ has non-positive sectional curvature, then the second term in the above integral is non-negative. If we can choose $F_{t}$ to be a geodesic deformation, i.e $F_{t}(x): [0, 1]\rightarrow(N, h)$ is a geodesic for any fixed $x\in\Omega$, then we know that $\nabla^{N}_{V}V\equiv 0$, so the last term in the above integral is zero. So we get $\frac{d^{2}}{dt^{2}}E_{\Omega}(F_{t})\geq 0$, which is the convexity for the Dirichlet energy under geodesic deformations.

Moreover, we have a refined estimate. In the second variation formula (\ref{second variation of E}), the third term in the integrand is zero, and the second term is nonnegative. To deal with the first term, we will use the following Kato inequality,
\begin{lemma}
 If $e$ and $V$ are two tangent vector fields on $(N, h)$, then
\begin{equation}
 \|\nabla_{e}V\|_{h}\geq |\nabla_{e}\|V\|_{h}|.
\end{equation}
\end{lemma}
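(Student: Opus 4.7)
The plan is to reduce the stated bound to a fiberwise Cauchy--Schwarz inequality on $TN$, with a small regularization to handle the zero set of $V$. At any point where $V\neq 0$, the scalar function $\|V\|_h=\sqrt{h(V,V)}$ is smooth, and metric compatibility of $\nabla$ gives
\[
\nabla_e \|V\|_h = \frac{h(\nabla_e V, V)}{\|V\|_h}.
\]
Applying the pointwise Cauchy--Schwarz bound $|h(\nabla_e V,V)|\leq \|\nabla_e V\|_h\,\|V\|_h$ on the fiber of $TN$ and cancelling one factor of $\|V\|_h$ yields the inequality on the open set $\{V\neq 0\}$. That is the entire computational content.

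The only technicality is at points where $V$ vanishes, where $\|V\|_h$ may fail to be differentiable. Two approaches resolve this cleanly. First, at any such point $\|V\|_h$ attains its global minimum $0$, so wherever its derivative happens to exist it must vanish and the inequality holds trivially. Second, and more uniformly, one regularizes by $f_\ep=\sqrt{h(V,V)+\ep}$ for $\ep>0$: the same Cauchy--Schwarz calculation gives $|\nabla_e f_\ep|\leq \|\nabla_e V\|_h$ everywhere, and sending $\ep\to 0^+$ recovers the pointwise inequality almost everywhere (and distributionally), which is the form actually needed when the lemma is inserted into the integrated second variation formula (\ref{second variation of E}).

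Because Cauchy--Schwarz supplies the whole argument, there is no substantive obstacle; the only item worth flagging is the standard care near zeros of $V$, which the $\ep$-smoothing dispatches in one line.
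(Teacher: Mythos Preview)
Your proof is correct and follows the same approach as the paper: compute $\nabla_e\|V\|_h = h(\nabla_e V, V)/\|V\|_h$ via metric compatibility and apply Cauchy--Schwarz. The paper's proof is just these two lines; your additional discussion of the zero set of $V$ is a reasonable technical remark but not something the paper addresses.
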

\begin{proof} We have
$$\nabla_{e}\|V\|_{h}=\frac{\lan\nabla_{e}V, V\ran_{h}}{\|V\|_{h}},$$
so by the Cauchy-Schwartz inequality, we get the desired result.
\end{proof}
Applying the above result to the first term in equation (\ref{second variation of E}),
\begin{displaymath}
 \begin{split}
  \sum_{\alpha=1}^{n}\|\nabla^{N}_{(F_{t})_{*}e_{\alpha}}V\|_{h}^{2} & \geq\sum_{\alpha=1}^{n}|\nabla^{N}_{(F_{t})_{*}e_{\alpha}}\|V\|_{h}|^{2}\\
  & =\sum_{\alpha=1}^{n}|\nabla^{M}_{e_{\alpha}}(\|V\|_{h}\circ F_{t})|^{2}.
 \end{split}
\end{displaymath}
Since $F_{t}$ is chosen to be a geodesic deformation, we know that
$$\|V\|_{h}(F_{t}(x))=dist_{h}(F_{0}(x), F_{1}(x))=dist_{h}(u_{0}(x), u_{1}(x)),$$
where $dist_{h}$ is the distance function of $(N, h)$. Now putting this into equation (\ref{second variation of E}), we have the \textbf{refined second variation formula}:
\begin{equation}\label{refined second variation for E}
 \frac{d^{2}}{dt^{2}}E_{\Omega}(F_{t})\geq2\int_{\Omega}\|\nabla dist_{h}(u_{0}, u_{1})\|^{2}_{g}dvol_{M}.
\end{equation}
If $u_{0}$ is a harmonic map, by integrating the above inequality twice with respect to the variable $t$, we can get an estimate of the $L^{2}$ norm of the gradient of the distance function $dist_{h}(u_{0}, u_{1})$ by the energy gap.

\subsection{Singular case}\label{singular case}

Now we will apply the same idea to our functional $\mathcal{M}$ under geodesic deformations. The first observation concerns the relation between $\mathcal{M}$ and $E$. Consider a compact open domain $\Omega\subset\mathbb{R}^{3}\setminus\Gamma$ and put condition (\ref{x and X}) into equation (\ref{harmonic energy}). By an integration by parts argument based on the fact that $g$ is harmonic, we get\footnote{This is also given by equation (66) of \cite{D}.}
\begin{equation}\label{E and M}
E_{\Omega}(X, Y)=\mathcal{M}_{\Omega}(x, Y)+\int_{\partial\Omega}\frac{\partial g}{\partial n}(g+2x)d\sigma,
\end{equation}
where $\mathcal{M}_{\Omega}$ is the functional $\mathcal{M}$ restricted to domain $\Omega$, $n$ is the unit outer normal of $\partial\Omega$, and $d\sigma$ the area element of $\partial\Omega$. Since $E$ and $\mathcal{M}$ only differ by a boundary integral, they must have the same critical points and thus we call $\mathcal{M}$ the \emph{reduced energy}. In fact, $\mathcal{M}$ is a regularization of $E$ in this special case since we are removing the infinite term 
$\int|\partial g|^{2}$ from $E$.

Now we obtain a convexity result for $\mathcal{M}_{\Om}$. We first choose our compact domain $\Om$ as an annulus region $A_{R, \epsilon}=B_{R}\setminus B_{\epsilon}$, where $B_{R}$ denotes the Euclidean ball of radius $R$ in $\R^{3}$. Denote $\Omega_{R, \epsilon}=A_{R, \epsilon}\setminus\mathcal{C}_{\epsilon}$ where $\mathcal{C}_{\epsilon}=\{\rho\leq\epsilon\}$ 
is the cylinder centered on the $z$ axis $\Ga$ of radius $\epsilon$. The definition of 
$H^{1}(\mathbb{R}^{3})$ and $H^{1}_{0, X_{0}}(\mathbb{R}^{3}\setminus\Gamma)$ motivate us to first consider functions $\alpha\in C^{\infty}_{c}(A_{R, \epsilon})$ and $y\in C^{\infty}_{c}(\Omega_{R, \epsilon})$, with $X=e^{g+x_{0}+\alpha}$ and $Y=Y_{0}+y$. Now consider a geodesic deformation
$$F: A_{R, \epsilon}\times[0, 1]\rightarrow\mathbb{H}^{2},$$
with $F_{0}=(X_{0}, Y_{0})$ and $F_{1}=(X, Y)$.
Denote $F_{t}=(X_{t}, Y_{t})$, $x_{t}=\log X_{t}-g$, and $y_{t}=Y_{t}-Y_{0}$.

Now we make an important observation that reduces the computational difficulty substantially. Since $y\in C^{\infty}_{c}(\Omega_{R, \epsilon})$, we know that on a neighborhood of $\mathcal{C}_{\epsilon}\cap A_{R, \epsilon}$, $Y\equiv Y_{0}$, and $X=X_{0}e^{\alpha}$. By basic hyperbolic geometry, we know that the geodesic from $(X_{0}, Y_{0})$ to $(X=X_{0}e^{\alpha}, Y=Y_{0})$ is given by
\begin{equation}\label{geodesic near axis}
X_{t}=X_{0}e^{t\alpha},\ \ \ Y_{t}=Y_{0}.
\end{equation}
By using equation (\ref{x and X}), we have that on a neighborhood of $\mathcal{C}_{\epsilon}\cap A_{R, \epsilon}$,
\begin{equation}\label{geodesic for x}
x_{t}=x_{0}+t\alpha.
\end{equation}
Now let us compute the second variation of the reduced energy $\mathcal{M}_{A_{R, \epsilon}}$
$$\frac{d^{2}}{dt^{2}}\mathcal{M}_{A_{R,\epsilon}}(x_{t}, Y_{t})=\frac{d^{2}}{dt^{2}}\mathcal{M}_{\Omega_{R,\epsilon}}(x_{t}, Y_{t})+\frac{d^{2}}{dt^{2}}\mathcal{M}_{A_{R, \epsilon}\cap\mathcal{C}_{\epsilon}}(x_{t}, Y_{t}).$$
For the first term, we use equation (\ref{E and M})
\begin{equation}\label{first part of second variation of M}
\begin{split}
\frac{d^{2}}{dt^{2}}\mathcal{M}_{\Omega_{R,\epsilon}}(x_{t}, Y_{t}) &=\frac{d^{2}}{dt^{2}}E_{\Omega_{R,\epsilon}}(X_{t}, Y_{t})-\frac{d^{2}}{dt^{2}}\int_{\partial\Omega_{R,\epsilon}}\frac{\partial g}{\partial n}(g+2x_{t})d\sigma;\\
             & =\frac{d^{2}}{dt^{2}}E_{\Omega_{R,\epsilon}}(X_{t}, Y_{t})-2\frac{d^{2}}{dt^{2}}\int_{\partial\Omega_{R,\epsilon}\cap\mathcal{C}_{R, \epsilon}}\frac{\partial g}{\partial n}x_{t}d\sigma;\\
             & =\frac{d^{2}}{dt^{2}}E_{\Omega_{R,\epsilon}}(X_{t}, Y_{t})-2\frac{d^{2}}{dt^{2}}\int_{\partial\Omega_{R,\epsilon}\cap\mathcal{C}_{R, \epsilon}}\frac{\partial g}{\partial n}(x_{0}+t\alpha)d\sigma;\\
             & =\frac{d^{2}}{dt^{2}}E_{\Omega_{R,\epsilon}}(X_{t}, Y_{t})\\
             & \geq 2\int_{\Omega_{R, \epsilon}}|\nabla dist_{-1}\big((X, Y), (X_{0}, Y_{0})\big)|^{2}d\mu.
\end{split}
\end{equation}
Here $dist_{-1}$ is the distance function on the hyperbolic plane $\mathbb{H}_{-1}$. The second $``="$ is because that $x_{t}\equiv x_{0}$ near $\partial A_{R, \epsilon}\cap\Omega_{R, \epsilon}$ since $\alpha$ is compactly supported in $A_{R, \epsilon}$. The third $``="$ is given by equation (\ref{geodesic for x}). The last $``="$ is because the second term there is linear in $t$. The last inequality $``\geq"$ comes from the convexity of the harmonic energy (\ref{refined second variation for E}) along geodesic paths.

Now we deal with the second part by direct calculation
\begin{equation}\label{second part of second variation of M}
\begin{split}
\frac{d^{2}}{dt^{2}}\mathcal{M}_{A_{R, \epsilon}\cap\mathcal{C}_{\epsilon}}(x_{t}, Y_{t}) &= \frac{d^{2}}{dt^{2}}\int_{A_{R, \epsilon}\cap\mathcal{C}_{\epsilon}}|\nabla x_{t}|^{2}+e^{-2g-2x_{t}}|\nabla Y_{t}|^{2}d\mu\\
             & =\frac{d^{2}}{dt^{2}}\int_{A_{R, \epsilon}\cap\mathcal{C}_{\epsilon}}|\nabla(x_{0}+t\alpha)|^{2}+e^{-2g-2(x_{0}+t\alpha)}|\nabla Y_{0}|^{2}d\mu\\
             & =\int_{A_{R, \epsilon}\cap\mathcal{C}_{\epsilon}}2|\nabla\alpha|^{2}+4\alpha^{2}e^{-2g-2(x_{0}+t\alpha)}|\nabla Y_{0}|^{2}d\mu\\
             & \geq \int_{A_{R, \epsilon}\cap\mathcal{C}_{\epsilon}}2|\nabla\alpha|^{2}d\mu\\
             & = 2\int_{A_{R, \epsilon}\cap\mathcal{C}_{\epsilon}}|\nabla dist_{-1}\big((X, Y), (X_{0}, Y_{0})\big)|^{2}d\mu.
\end{split}
\end{equation}
The second $``="$ comes from equation (\ref{geodesic for x}) again. The last $"="$ follows from the equation (\ref{geodesic near axis}) on $A_{R, \epsilon}\cap\mathcal{C}_{\epsilon}$ and the fact that the distance $d_{-1}\big((X, Y), (X_{0}, Y_{0})\big)=\alpha$.
\begin{remark}
We can put $\frac{d^{2}}{dt^{2}}$ into the integral because that the integrands are all uniformly integrable.
\end{remark}
Now combining the above inequalities, we get the desired convexity under geodesic deformation,
\begin{lemma}\label{convexity1} With $(X_0,Y_0)$ and $(X,Y)$ as above we have
\begin{equation}\label{convexity inequality1}
 \frac{d^{2}}{dt^{2}}\mathcal{M}_{A_{R, \epsilon}}(x_{t}, Y_{t})\geq 2\int_{A_{R, \epsilon}}|\nabla d_{-1}\big((X, Y), (X_{0}, Y_{0})\big)|^{2}d\mu.
\end{equation}
\end{lemma}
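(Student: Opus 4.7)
The plan is to split the domain as $A_{R,\epsilon}=\Omega_{R,\epsilon}\cup(A_{R,\epsilon}\cap\mathcal{C}_{\epsilon})$, prove the inequality on each of the two pieces, and add them up. The outer piece $\Omega_{R,\epsilon}$ is handled via the harmonic map energy $E$ and its convexity along geodesic deformations, while the inner cylindrical piece is handled by direct computation using the explicit geodesic near the axis.

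On $\Omega_{R,\epsilon}$ I would apply identity \eqref{E and M} to write
$$\mathcal{M}_{\Omega_{R,\epsilon}}(x_{t},Y_{t})=E_{\Omega_{R,\epsilon}}(X_{t},Y_{t})-\int_{\partial\Omega_{R,\epsilon}}\frac{\partial g}{\partial n}(g+2x_{t})\,d\sigma.$$
Since $\alpha$ is compactly supported inside $A_{R,\epsilon}$, the boundary integrand is independent of $t$ on $\partial A_{R,\epsilon}$, while on the cylindrical wall $\partial\mathcal{C}_{\epsilon}\cap A_{R,\epsilon}$ the near-axis geodesic \eqref{geodesic near axis} together with \eqref{geodesic for x} forces $x_{t}=x_{0}+t\alpha$, making the integrand affine in $t$. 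Hence the boundary term is affine in $t$, its second $t$-derivative vanishes, and $\frac{d^{2}}{dt^{2}}\mathcal{M}_{\Omega_{R,\epsilon}}(x_{t},Y_{t})=\frac{d^{2}}{dt^{2}}E_{\Omega_{R,\epsilon}}(X_{t},Y_{t})$. The refined second variation formula \eqref{refined second variation for E}, applied along the geodesic deformation $F_{t}$ into the negatively curved target $\mathbb{H}^{2}$, then bounds this from below by $2\int_{\Omega_{R,\epsilon}}|\nabla d_{-1}((X,Y),(X_{0},Y_{0}))|^{2}\,d\mu$.

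On the cylindrical piece $A_{R,\epsilon}\cap\mathcal{C}_{\epsilon}$ the support assumption on $y$ forces $Y_{t}\equiv Y_{0}$, so the integrand $|\nabla x_{t}|^{2}+e^{-2g-2x_{t}}|\nabla Y_{0}|^{2}$ may be differentiated in $t$ directly. Using $x_{t}=x_{0}+t\alpha$, the first summand contributes exactly $2|\nabla\alpha|^{2}$ to $\frac{d^{2}}{dt^{2}}$, while the second contributes $4\alpha^{2}e^{-2g-2(x_{0}+t\alpha)}|\nabla Y_{0}|^{2}\geq 0$. Since the hyperbolic distance along the vertical geodesic \eqref{geodesic near axis} satisfies $d_{-1}((X,Y),(X_{0},Y_{0}))=|\alpha|$ on this region, the bound $\frac{d^{2}}{dt^{2}}\mathcal{M}_{A_{R,\epsilon}\cap\mathcal{C}_{\epsilon}}\geq 2\int|\nabla d_{-1}|^{2}\,d\mu$ follows.

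Summing the two pieces gives the lemma. The only real technicality is to justify interchanging $\frac{d^{2}}{dt^{2}}$ with the spatial integrals, which rests on the uniform integrability of the integrands (flagged in the remark preceding the lemma); this is immediate from the smooth compact support of $\alpha$ and $y$, the smoothness and boundedness of $x_{0}$ and $|\nabla Y_{0}|$ on the closure of $A_{R,\epsilon}\setminus\Gamma$, and the positivity of $X_{0}=e^{g+x_{0}}$ away from $\Gamma$.
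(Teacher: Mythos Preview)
Your proposal is correct and follows essentially the same route as the paper: split $A_{R,\epsilon}$ into $\Omega_{R,\epsilon}$ and $A_{R,\epsilon}\cap\mathcal{C}_{\epsilon}$, use \eqref{E and M} together with the affine dependence of the boundary term on $t$ and the refined second variation \eqref{refined second variation for E} on the first piece, and compute directly via \eqref{geodesic for x} on the second. The paper's argument is line-for-line the same, including the observation that $d_{-1}=|\alpha|$ on the cylindrical piece and the remark on uniform integrability.
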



\section{Proof of Theorem 1.1}\label{proof of main theorem1}
In this section we give the proof of Theorem \ref{main theorem}.
\begin{proof}
For $\alpha\in H^{1}(\mathbb{R}^{3})$, $\alpha_{-}=\inf\{0, \alpha\}\in L^{\infty}(\mathbb{R}^{3})$, and $y\in H^{1}_{0, X_{0}}(\mathbb{R}^{3}\setminus\Gamma)$, by the definition of $H^{1}(\mathbb{R}^{3})$ and $H^{1}_{0, X_{0}}(\mathbb{R}^{3}\setminus\Gamma)$, we can choose two sequences of mappings $\{\alpha_{n}\in C^{\infty}_{c}(\mathbb{R}^{3}\setminus\{0\})\}_{n=1}^{\infty}$ and $\{y_{n}\in C^{\infty}_{c}(\mathbb{R}^{3}\setminus\Gamma)\}_{n=1}^{\infty}$, such that\footnote{See equation (\ref{norm for x}) and (\ref{norm for y})},
\begin{equation}\label{approximation sequence}
\|\alpha-\alpha_{n}\|_{1}\rightarrow 0,\ \ \ \|y-y_{n}\|_{1, X_{0}}\rightarrow 0.
\end{equation}
It is easy to see that
\begin{equation}\label{approximation}
\mathcal{M}(x_{n}, Y_{n})\rightarrow\mathcal{M}(x, Y),
\end{equation}
where $x_{n}=x_{0}+\alpha_n$, $Y_{n}=Y_{0}+y_{n}$, and $(x, Y)$ is given in Theorem \ref{main theorem}. We can further assume that there exist two sequences of positive numbers $\{R_{n}\rightarrow\infty\}_{n=1}^{\infty}$ and $\{\epsilon_{n}\rightarrow 0\}_{n=1}^{\infty}$, such that $\alpha_{n}\in C^{\infty}_{c}(A_{R_{n}, \epsilon_{n}})$, and $y_{n}\in C^{\infty}_{c}(\Omega_{R_{n}, \epsilon_{n}})$.

Now we would like to use the argument in the proof of uniqueness of harmonic mappings when the ambient manifold is negatively curved\footnote{See Section 3 of \cite{S}}. For fixed $n$, we focus on the region $A_{R_{n}, \epsilon_{n}}$ and $\Omega_{R_{n}, \epsilon_{n}}$. We will discard the sub-index $n$ in the following argument. There is a geodesic deformation 
$F_{t}: A_{R, \epsilon}\rightarrow\mathbb{H}^{2}$ from $(X_{0}, Y_{0})$ to $(X=X_{0}e^{\alpha}, Y=Y_{0}+y)$. We know that $\mathcal{M}_{A_{R, \epsilon}}(F_{t})$ is a convex function from above. Since $(X_{0}, Y_{0})$ is harmonic on $\mathbb{R}^{3}\setminus\Gamma$, we will show that $(x_{0}, Y_{0})$ is critical point of the reduced functional $\mathcal{M}_{A_{R, \epsilon}}$. In fact, we have\footnote{See equations (70)(71) in \cite{D}.}:
\begin{equation}\label{equation for x0}
\triangle\log X_{0}=-\frac{|\partial Y_{0}|^{2}}{X_{0}^{2}},
\end{equation}
\begin{equation}\label{equation for y0}
\triangle Y_{0}=2\frac{\lan\partial Y_{0}, \partial X_{0}\ran}{X_{0}}.
\end{equation}

\begin{lemma} At $t=0$ we have
\begin{equation}
\frac{d}{dt}\Big{|}_{t=0}\mathcal{M}_{A_{R, \epsilon}}(F_{t})=0.
\end{equation}
\end{lemma}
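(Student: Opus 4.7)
The approach mirrors the split used in the proof of Lemma \ref{convexity1}: decompose
$A_{R,\epsilon}=\Omega_{R,\epsilon}\cup(A_{R,\epsilon}\cap\mathcal{C}_{\epsilon})$ and compute $\frac{d}{dt}\big|_{t=0}\mathcal{M}_{A_{R,\epsilon}}(F_{t})$ as the sum of two first variations. On each piece the variation has been reduced to a tractable form by the preceding computation, and I expect the two contributions to cancel as boundary integrals along $\partial\mathcal{C}_{\epsilon}\cap A_{R,\epsilon}$.

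On $\Omega_{R,\epsilon}$ I would use the identity (\ref{E and M}) to replace $\mathcal{M}_{\Omega_{R,\epsilon}}$ by $E_{\Omega_{R,\epsilon}}$ plus a boundary correction. Since $(X_{0},Y_{0})$ is harmonic on $\mathbb{R}^{3}\setminus\Gamma$ by (\ref{equation for x0})--(\ref{equation for y0}), the first variation of $E_{\Omega_{R,\epsilon}}$ at $t=0$ reduces to the boundary flux $2\int_{\partial\Omega_{R,\epsilon}}\langle V,dF_{0}(\nu)\rangle_{-1}d\sigma$, where $V=F_{*}(\partial_{t})|_{t=0}$ and $\nu$ is the outer normal of $\Omega_{R,\epsilon}$. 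On the spherical pieces of $\partial\Omega_{R,\epsilon}$ the variational field $V$ vanishes because $\alpha,y$ are compactly supported in $A_{R,\epsilon}$; on the cylindrical piece $\partial\mathcal{C}_{\epsilon}\cap A_{R,\epsilon}$ the explicit geodesic formulas (\ref{geodesic near axis})--(\ref{geodesic for x}) give $V=(X_{0}\alpha,0)$, so the hyperbolic inner product collapses to $\alpha\,\partial_{\nu}(g+x_{0})$. Differentiating the boundary integral in (\ref{E and M}) likewise only picks up a cylindrical contribution (on the spheres $x_{t}\equiv x_{0}$), which cancels the $\alpha\,\partial_{\nu}g$ term. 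What remains is
\begin{equation*}
\frac{d}{dt}\Big|_{t=0}\mathcal{M}_{\Omega_{R,\epsilon}}(x_{t},Y_{t})=2\int_{\partial\mathcal{C}_{\epsilon}\cap A_{R,\epsilon}}\alpha\,\partial_{\nu}x_{0}\,d\sigma,
\end{equation*}
with $\nu$ pointing into $\mathcal{C}_{\epsilon}$, i.e.\ $\partial_{\nu}=-\partial_{\rho}$.

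On $A_{R,\epsilon}\cap\mathcal{C}_{\epsilon}$ I would compute directly, using that there $Y_{t}\equiv Y_{0}$ and $x_{t}=x_{0}+t\alpha$. Expanding $|\nabla x_{t}|^{2}+e^{-2g-2x_{t}}|\nabla Y_{0}|^{2}$ and differentiating at $t=0$ gives $2\int(\nabla x_{0}\cdot\nabla\alpha-\alpha\, e^{-2g-2x_{0}}|\nabla Y_{0}|^{2})d\mu$. Integration by parts on the first term combined with (\ref{equation for x0})---which precisely says $\Delta x_{0}=-e^{-2g-2x_{0}}|\partial Y_{0}|^{2}$ on $\mathbb{R}^{3}\setminus\Gamma$---makes the interior terms cancel, leaving the boundary contribution $2\int_{\partial\mathcal{C}_{\epsilon}\cap A_{R,\epsilon}}\alpha\,\partial_{\rho}x_{0}\,d\sigma$ (the spherical portion of $\partial(A_{R,\epsilon}\cap\mathcal{C}_{\epsilon})$ gives nothing since $\alpha$ has compact support in $A_{R,\epsilon}$). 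Adding this to the $\Omega_{R,\epsilon}$ contribution and using $\partial_{\nu}=-\partial_{\rho}$ yields $0$.

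The main technical obstacle is the integration by parts on $A_{R,\epsilon}\cap\mathcal{C}_{\epsilon}$, because the axis $\Gamma$ lies inside this region and $g=2\log\rho$ is singular there while (\ref{equation for x0}) was derived only on $\mathbb{R}^{3}\setminus\Gamma$. I would handle this by excising the tube $\{\rho\leq\delta\}$ and letting $\delta\to 0$, verifying that the extra boundary integral over $\{\rho=\delta\}\cap A_{R,\epsilon}$ vanishes in the limit: $x_{0}$ and $\alpha$ are smooth across $\Gamma$ (recall $X_{0}\sim\rho^{2}$ so $x_{0}=\log X_{0}-g$ extends smoothly), $\partial_{\rho}x_{0}$ is bounded, and the area of the tube is $O(\delta)$. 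Careful sign and normal-direction bookkeeping between the two pieces is the other place errors could enter, but once the two cylindrical boundary terms are both written in terms of $\partial_{\rho}x_{0}$ their cancellation is immediate.
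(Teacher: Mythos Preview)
Your argument is correct, but it takes a somewhat different route from the paper's. The paper does not split $A_{R,\epsilon}$ at the fixed scale $\epsilon$ nor invoke the identity (\ref{E and M}); instead it writes the first variation of $\mathcal{M}_{A_{R,\epsilon}}$ directly as a single volume integral, then introduces an auxiliary scale $\lambda\ll\epsilon$, integrates by parts on $A_{R,\epsilon}\setminus\mathcal{C}_{\lambda}$ using the Euler--Lagrange equations (\ref{equation for x0})--(\ref{equation for y0}) together with $Y_{0}'\equiv 0$ near the axis, and finally observes that the resulting boundary integral over $\{\rho=\lambda\}$ and the residual volume integral over $A_{R,\epsilon}\cap\mathcal{C}_{\lambda}$ both tend to zero as $\lambda\to 0$. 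There is no explicit matching of two nonzero boundary terms at a finite scale.

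Your approach instead mirrors the decomposition of Lemma \ref{convexity1}: on $\Omega_{R,\epsilon}$ you pass through (\ref{E and M}) and use the harmonic-map first variation formula to reduce to a boundary flux on $\{\rho=\epsilon\}$, and on $A_{R,\epsilon}\cap\mathcal{C}_{\epsilon}$ you integrate by parts (with the tube excision at scale $\delta$) to produce the matching boundary term with the opposite sign. This is slightly more elaborate but more geometric, and it makes the cancellation visible at the fixed scale $\epsilon$; the paper's route is leaner because it treats the whole annulus at once and needs only one limiting parameter. Your tube-excision at $\delta$ is essentially the paper's $\lambda\to 0$ step, and your observation that $x_{0}$ extends smoothly across $\Gamma$ (so that $\partial_{\rho}x_{0}$ is bounded and the $\{\rho=\delta\}$ contribution vanishes) is exactly what the paper uses when it says ``$\alpha$ and $\frac{\partial x_{0}}{\partial n}$ are bounded.''
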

\begin{proof} We compute
\begin{displaymath}
\begin{split}
\frac{d}{dt}\Big{|}_{t=0} & \mathcal{M}_{A_{R, \epsilon}}(x_{t}, Y_{t})=\frac{d}{dt}\Big{|}_{t=0}\int_{A_{R, \epsilon}}|\partial x_{t}|^{2}+e^{-2g-2x_{t}}|\partial Y_{t}|^{2}d\mu\\
         & =2\int_{A_{R, \epsilon}}\lan\partial x_{0}, \partial x^{\prime}_{0}\ran- x_{0}^{\prime}e^{-2g-2x_{0}}|\partial Y_{0}|^{2}+e^{-2g-2x_{0}}\lan\partial Y_{0}, \partial Y^{\prime}_{0}\ran d\mu.
\end{split}
\end{displaymath}
Here we put the $\frac{d}{dt}$ into the integral in the second $``="$ since the integrand is uniformly integrable.

Taking $\la\ll\ep$, we separate $A_{R, \ep}$ into two parts $A_{R, \ep}\setminus\mC_{\la}$ and $A_{R, \ep}\cap \mC_{\la}$. Using that $(X_{0}, Y_{0})$ satisfies the Euler-Lagrange equations(\ref{equation for x0})(\ref{equation for y0}) for $\M$ to do integration by parts on $A_{R, \ep}\setminus\mC_{\la}$ where all functions are regular, and noticing the fact that $Y^{\pr}_{0}\equiv 0$ near $\mC_{\la}$, we have
$$\frac{d}{dt}\Big{|}_{t=0}\mathcal{M}_{A_{R, \epsilon}}(x_{t}, Y_{t}) =2\int_{\{\rho=\la\}\cap A_{R, \ep}}\frac{\partial x_{0}}{\partial n}\cdot\al d\sigma+2\int_{A_{R, \ep}\cap\mC_{\la}}\lan\partial x_{0}, \partial\al\ran-\al e^{-2g-2x_{0}}|\partial Y_{0}|^{2} d\mu.$$
The integrals above converge to $0$ as $\la\rightarrow 0$ since $\al$ and $\frac{\partial x_{0}}{\partial n}$ are bounded and all the other integrands are uniformly integrable on $A_{R, \ep}\cap\mC_{\la}$.
\end{proof}

Let us return to the proof of Theorem \ref{main theorem}. Integrating inequality (\ref{convexity inequality1}) with respect to $t$ once, and using the fact that 
$\frac{d}{dt}\big{|}_{t=0}\mathcal{M}_{A_{R, \ep}}(x_{t}, Y_{t})=0$ we get,
$$\frac{d}{dt}\mathcal{M}_{A_{R, \epsilon}}(x_{t}, Y_{t}) \geq 2t\int_{A_{R, \epsilon}}|\partial d_{-1}\big((X, Y), (X_{0}, Y_{0})\big)|^{2}d\mu.$$
Integrating with respect to $t$ again, we get
$$ \mathcal{M}(x, Y)-\mathcal{M}(x_{0}, Y_{0}) \geq \int_{A_{R, \epsilon}}|\partial d_{-1}\big((X, Y), (X_{0}, Y_{0})\big)|^{2}d\mu.$$
Since the difference between $(x, Y)$ and $(x_{0}, Y_{0})$ is now restricted to a compact domain $B_{R}$, we can apply the scale invariant Sobolev inequality(see Theorem 1 on page 263 in \cite{E}) to get,
\begin{equation}\label{bound for L6 of d-1}
 \mathcal{M}(x, Y)-\mathcal{M}(x_{0}, Y_{0}) \geq \frac{1}{C}(\int_{A_{R, \epsilon}}|d_{-1}\big((X, Y), (X_{0}, Y_{0})\big)|^{6}d\mu)^{\frac{1}{3}}.
\end{equation}



In order to extend the above inequality to the general case $\alpha=x-x_{0}\in H^{1}(\mathbb{R}^{3})$ and $y=Y-Y_{0}\in H^{1}_{0}(\mathbb{R}^{3}\setminus\Gamma)$, we first use the compactly supported approximating sequence $\{(\al_{n}, y_{n})\}$ (\ref{approximation sequence}) into (\ref{bound for L6 of d-1}).
By basic hyperbolic geometry
\begin{displaymath}
\begin{split}
 d_{-1}\big( & (X, Y), (X_{n}, Y_{n})\big) =d_{-1}\big((X_{0}e^{\alpha}, Y_{0}+y), (X_{0}e^{\al_{n}}, Y_{0}+y_{n})\big)\\
  &\leq d_{-1}\big((X_{0}e^{\al}, Y_{0}+y), (X_{0}e^{\al}, Y_{0}+y_{n})\big)+d_{-1}\big((X_{0}e^{\al}, Y_{0}+y_{n}), (X_{0}e^{\al_{n}}, Y_{0}+y_{n})\big)\\
  &=e^{-\al}\frac{|y-y_{n}|}{X_{0}}+|\al-\al_{n}|\rightarrow 0, \quad \textrm{almost everywhere in $\R^{3}$},
\end{split}
\end{displaymath}
since $\al_{-}\in L^{\infty}$. Hence
$$|d_{-1}\big((X_{n}, Y_{n}), (X_{0}, Y_{0})\big)-d_{-1}\big((X, Y), (X_{0}, Y_{0})\big)|\rightarrow 0, \ \textrm{almost everywhere in $\R^{3}$}.$$
Using (\ref{approximation}) and Fatou's lemma to take the limit, we have proven (\ref{gap inequality}).
\end{proof}


\section{Extension to Chru\'sciel data}\label{chrusciel}

In this section we apply the convexity argument to the class of initial data defined in \cite{Chrus1}\cite{ChrusLiWe}. We first review the conditions on this data.

\subsection{Review of \cite{Chrus1}\cite{ChrusLiWe}}\label{review of chrusciel reduction}

Let us briefly review Chru\'sciel's reduction\cite{Chrus1}. Let $(M, g)$ be a \emph{3-dimensional simply connected asymptotically flat} manifold, say with two ends, such that each end $M_{ext}$ is diffeomorphic to $\R^{3}\setminus B(R)$. Assume that there are coordinates on 
$\R^{3}\setminus B(R)$ such that in these coordinates the metric $g$ satisfies,
\begin{equation}\label{decay of g at infinity}
g_{ij}-\de_{ij}=o_{k}(r^{-1/2}),\quad k\geq 5. 
\end{equation}
Assume $(M, g)$ is axisymmetric, i.e. there exists a killing vector field $\eta$ with complete periodic orbits, such that $\mL_{\eta}g=0$, then by Theorem 2.9 in \cite{Chrus1}, $M\simeq\R^{3}\setminus\{0\}$, where one end is at $\infty$ and the other at the origin $0$, and the metric $g$ can be written
\begin{equation}\label{metric}
g=e^{-2U+2\al}(d\rho^{2}+dz^{2})+\rho^{2}e^{-2U}(d\varphi+\rho B_{\rho}d\rho+A_{z}dz)^{2},
\end{equation}
where $(\rho, \varphi, z)$ are cylindrical coordinates of $\R^{3}$, and all functions are $\varphi$ independent. Furthermore, in these coordinates we have
\begin{equation}\label{axil-killing vector}
\eta=\partial_{\varphi},
\end{equation}
and
\begin{equation}\label{decay of U at infinity}
U=o_{k-3}(r^{-1/2}),\ \ r\rightarrow\infty,
\end{equation}
\begin{equation}
\al=o_{k-4}(r^{-1/2}),\ \ r\rightarrow\infty,
\end{equation}
\begin{equation}\label{rate of U at 0}
U=2\log r+o_{k-4}(r^{1/2}),\ \ r\rightarrow 0,
\end{equation}
\begin{equation}
\al=o_{k-4}(r^{1/2}),\ \ r\rightarrow 0.
\end{equation}

Now let $(M, g, h)$ be a \emph{simply connected, asymptotically flat, maximal, axisymmetric, vacuum initial data set} for the Einstein equations. We assume $(M, g)$ is as above, and we assume the asymptotic decay for $h$ on each end $M_{ext}$,
\begin{equation}\label{decay of k at infinity}
|h|_{g}=O_{k-1}(r^{-\la}),\ r\rightarrow \infty, \la>3/2.
\end{equation}

\begin{remark}
Note that our decay rate for $h$ is faster than $-3/2$, while in \cite{ChrusLiWe}, they require the decay rate to be faster than $-5/2$.
\end{remark}

Now the vacuum constraint equation for $(g, h)$ and the maximal condition $tr_{g}h=0$ imply $*_{g}(i_{\eta}h\wedge\eta)$ is closed\footnote{See Section 2 of \cite{D}.}, which is then exact since $\pi_{1}(M)=0$, so there exists a function $w$, such that,
\begin{equation}\label{momentum potential}
dw=*_{g}(i_{\eta}h\wedge\eta),
\end{equation}
where $*_{g}$ is the Hodge star operator for $g$. In our notation in Section \ref{introduction}
\begin{equation}\label{relation between (x, Y) and (U, w)}
U=-\frac{1}{2}x,\ \ w=\frac{1}{2}Y.
\end{equation}
It is obvious that $dw\equiv 0$ on the axis $\Ga=\{\rho=0,\ z\neq 0\}$ since $\eta\equiv 0$ there. We will normalize $w$ so that,
\begin{equation}\label{w on axis}
w|_{\mA_{i}}=w_{i},
\end{equation}
where $\mA_{1}=\{\rho=0, z<0\}$, $\mA_{2}=\{\rho=0, z>0\}$ are the two parts of the axis $\Ga$, and $w_{i}$ corresponds to the value of Extreme Kerr solution (\ref{Extreme Kerr}) on $\mA_{i}$.

Now by the decay (\ref{decay of g at infinity})(\ref{decay of k at infinity}) of $(g, h)$ and the definition of $dw$ (\ref{momentum potential}), we can derive the decay rate of $dw$ at infinity,
\begin{equation}\label{decay of Dw at infinity}
|Dw|_{\de}\leq C\rho^{2}r^{-\la},\ r\rightarrow\infty.
\end{equation}
By an inversion formula $x\rightarrow\frac{x}{|x|^{2}}$, which is done in (2.31)(2.32) in \cite{ChrusLiWe}, we can get the blow up rate of $dw$ near origin,
\begin{equation}\label{rate of Dw at 0}
|Dw|_{\de}\leq C^{\pr}\rho^{2}r^{\la-6},\ r\rightarrow 0.
\end{equation}
Using (\ref{momentum potential}) and (\ref{metric}) we have decay estimates of $dw$ near the axis away from $0$ and $\infty$,
\begin{equation}\label{decay of Dw at axis}
|Dw|_{\de}\leq C(\de)\rho^{2},\ \rho\rightarrow 0,\ \de\leq r\leq 1/\de,
\end{equation}
where $C(\de)$ is a constant depending on $\de$.

From (2.10) in \cite{ChrusLiWe}, we have a bound for the ADM mass $m$ of $(M, g, h)$ when $k\geq 6$,
\begin{equation}
m\geq \frac{1}{8\pi}\int_{\R^{3}}\big[|DU|^{2}+\frac{e^{4U}}{\rho^{4}}|Dw|^{2}\big]dx.
\end{equation}
Now we will apply the convexity argument to the functional
\begin{equation}\label{functional I}
\I(U, w):=\int_{\R^{3}}\big[|DU|^{2}+\frac{e^{4U}}{\rho^{4}}|Dw|^{2}\big]dx.
\end{equation}

\begin{theorem}\label{main theorem2}
For $k\geq 6$, $\I(U, w)$ is bounded from below by the corresponding value of the Extreme Kerr data(\ref{Extreme Kerr}), i.e. $\I_{0}=\I(U_{0}, w_{0})$, among all data $\{(U, w)\}$ satisfying (\ref{decay of U at infinity})(\ref{rate of U at 0})(\ref{decay of Dw at infinity}) (\ref{rate of Dw at 0})(\ref{decay of Dw at axis}) and (\ref{w on axis}), i.e.
\begin{equation}
\I(U, w)\geq \I(U_{0}, w_{0}).
\end{equation}
Moreover, we have the gap bound,
\begin{equation}\label{gap inequality2}
\I(U, w)-\I(U_{0}, w_{0})\geq C\{\int_{\R^{3}}d_{-1}^{6}\big((U, w),(U_{0}, w_{0})\big)dx\}^{1/3},
\end{equation}
where $d_{-1}\big((U, w), (U_{0}, w_{0})\big)$ is the distance between $(\rho^{2}e^{-2U}, 2w)$ and $(\rho^{2}e^{-2U_{9}}, 2w_{0})$ with respect to the hyperbolic metric $ds^{2}_{-1}$.
\end{theorem}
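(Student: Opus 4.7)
The plan is to reduce Theorem \ref{main theorem2} to Theorem \ref{main theorem} via the identifications $x = -2U$ and $Y = 2w$ from (\ref{relation between (x, Y) and (U, w)}), which imply $X = e^{g+x} = \rho^{2}e^{-2U}$ and, by direct substitution, $\M(x,Y) = 4\,\I(U,w)$. Under this change of variables, Extreme Kerr data $(U_{0}, w_{0})$ corresponds to $(x_{0}, Y_{0})$, so both the lower bound $\I(U,w) \geq \I(U_{0}, w_{0})$ and the $L^{6}$ gap estimate (\ref{gap inequality2}) follow at once from (\ref{main inequality}) and (\ref{gap inequality}) \emph{once} one verifies that $\alpha := x - x_{0} = -2(U-U_{0})$ lies in $H^{1}(\R^{3})$ with $\alpha_{-} \in L^{\infty}(\R^{3})$, and that $y := Y - Y_{0} = 2(w - w_{0})$ lies in $H^{1}_{0, X_{0}}(\R^{3}\setminus\Gamma)$.

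The verification splits into the three singular regions. At infinity, the decay $U = o_{k-3}(r^{-1/2})$ in (\ref{decay of U at infinity}), shared by $U_{0}$, gives $D\alpha = o(r^{-3/2})$ and hence $|D\alpha|^{2} \in L^{1}$ near $\infty$; since $X_{0}\sim \rho^{2}$ there, (\ref{decay of Dw at infinity}) yields $X_{0}^{-2}|Dy|^{2} \leq C r^{-2\la}$ with $2\la > 3$, also integrable. At the origin, the leading $2\log r$ singularity of $U$ from (\ref{rate of U at 0}) cancels against the same singularity of $U_{0}$, leaving $D\alpha = o(r^{-1/2})$ and $|D\alpha|^{2}$ integrable against $r^{2}dr$; the blow-up rate (\ref{rate of Dw at 0}) combined with $X_{0}^{-2}\sim \rho^{-4}$ gives $X_{0}^{-2}|Dy|^{2} \leq C r^{2\la -8}$, integrable for $\la > 3/2$. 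Near the axis away from $\{0,\infty\}$, the bound (\ref{decay of Dw at axis}) together with $X_{0}\sim\rho^{2}$ gives $X_{0}^{-2}|Dy|^{2} \leq C$, which is locally integrable. The $L^{\infty}$ bound on $\alpha_{-}$ follows because $U$ is uniformly bounded above away from $0$ by (\ref{decay of U at infinity}), the logarithmic parts at the origin cancel in $U - U_{0}$, and the required normalization (\ref{w on axis}) makes $y$ well defined.

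To conclude we must produce approximating sequences $\alpha_{n} \in C^{\infty}_{c}(\R^{3}\setminus\{0\})$ and $y_{n} \in C^{\infty}_{c}(\R^{3}\setminus\Gamma)$ converging to $\alpha$ and $y$ in $H^{1}(\R^{3})$ and $H^{1}_{0,X_{0}}(\R^{3}\setminus\Gamma)$ respectively, obtained by multiplying $\alpha$, $y$ against standard logarithmic cutoffs that vanish near $\{0\}$, near $\{\infty\}$, and (for $y_{n}$) near $\Ga$. The decay estimates above, together with the extra $\rho^{2}$ factor available for $Dw$ on the cylinder $\mC_{\la}$, bound the cutoff error terms and show they tend to zero. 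I expect the delicate step to be exactly this approximation near the axis: the weight $X_{0}^{-2}$ degenerates like $\rho^{-4}$, and one has to check that (\ref{decay of Dw at axis}) compensates this precisely so that logarithmic cutoffs in $\rho$ yield vanishing error in the weighted norm; the asymptotic work in Section \ref{asymptotic} is presumably what gets invoked here. With all of these in hand, Theorem \ref{main theorem} applies directly and gives both conclusions of Theorem \ref{main theorem2}.
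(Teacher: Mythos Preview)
Your reduction to Theorem \ref{main theorem} fails at the origin, and this is precisely the point of the theorem. First, a factual error: by (\ref{rate of U at 0}) one has $U = 2\log r + o(r^{1/2})$, whereas by (\ref{rate of U0 Dw0}) $U_{0} = \log r + C$; the leading logarithms do \emph{not} cancel, so $\alpha = -2(U-U_{0}) \sim -2\log r$ near $0$. (This alone does not kill the argument: $|D\alpha|^{2}\sim r^{-2}$ is still integrable and $\alpha_{-}$ is still bounded since $\alpha\to +\infty$.) The fatal problem is the verification of $y\in H^{1}_{0,X_{0}}$. Near $0$ one has $X_{0}=\rho^{2}e^{-2U_{0}}\sim \rho^{2}r^{-2}$, hence $X_{0}^{-2}\sim \rho^{-4}r^{4}$, and with $|Dw|^{2}\le C\rho^{4}r^{2\la-12}$ from (\ref{rate of Dw at 0}) this gives $X_{0}^{-2}|Dy|^{2}\le C r^{2\la-8}$. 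This is integrable over $B_{\de}\subset\R^{3}$ only when $2\la-8>-3$, i.e.\ $\la>5/2$, not $\la>3/2$ as you claim. So the hypothesis of Theorem \ref{main theorem} is simply not available in the range $3/2<\la\le 5/2$, which is exactly the improvement Theorem \ref{main theorem2} offers over \cite{ChrusLiWe}.

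The paper's proof is organized around exactly this obstruction. In the cut-and-paste argument (Lemmas \ref{approx lemma1}--\ref{approx lemma3}) one cuts $(U,w)$ to $(U_{0},w_{0})$ near infinity, but near the origin one cuts \emph{only} $w$ to $w_{0}$ and keeps $U$ untouched. The point, made explicit in the remark after Lemma \ref{approx lemma2}, is that the weight $e^{4U}/\rho^{4}\sim r^{8}/\rho^{4}$ decays much faster than $e^{4U_{0}}/\rho^{4}\sim r^{4}/\rho^{4}$, and it is this extra $r^{4}$ that makes $|Dw_{0}|^{2}$ integrable against the \emph{original} weight and allows $\la>3/2$. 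The convexity Lemma \ref{convexity2} and the first-variation Lemma \ref{vanishing of first variation2} are then proved for these approximations, with the second-variation computation on $\mA_{\de,\ep}$ done directly for $\I$ (where $U_{t}=U_{0}+t\alpha$ but the integrand involves $e^{4(U_{0}+t\alpha)}$, not $e^{4U_{0}}$). Your plan of packaging everything into the $X_{0}$-weighted space and invoking Theorem \ref{main theorem} cannot see this asymmetry and therefore cannot reach $\la>3/2$.
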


\begin{remark}\label{integrability of (U,w)}
Let us say a few words about the integrability of $\I(U, w)$ under conditions (\ref{decay of U at infinity})(\ref{rate of U at 0})(\ref{decay of Dw at infinity}) and (\ref{rate of Dw at 0}). In fact, near $\infty$, $|DU|^{2}=o(r^{-3})$ is integrable, and $\frac{e^{4U}}{\rho^{4}}|Dw|^{2}=O(r^{-2\la})$ is also integrable, when $\la>3/2$. Near the singularity $0$, $|DU|^{2}=O(r^{-2})$ is integrable, and $\frac{e^{4U}}{\rho^{4}}|Dw|^{2}=O(\frac{r^{8}}{\rho^{4}}\cdot \rho^{4}r^{2\la-12})=O(r^{2\la-4})$ which is integrable only when $\la>1/2$.

For the extreme Kerr solution $(U_{0}, w_{0})$, the blow up rate at the origin $0$ and decay rate at $\infty$ are\footnote{See Appendix A of \cite{ChrusLiWe}.}:
\begin{equation}\label{rate of U0 Dw0}
U_{0}=\log r+C,\ \ |Dw_{0}|_{\de}\leq C\frac{\rho^{2}}{r^{3}}:\ \ r\rightarrow 0.
\end{equation}
\begin{equation}\label{decay of Dw0}
|Dw_{0}|_{\de}\leq C\frac{\rho^{2}}{r^{3}}: \ \ r\rightarrow\infty.
\end{equation}
So the integrability of $\I(U_{0}, w_{0})$ follows as above.
\end{remark}


\subsection{Cut and paste argument}

Given data $(U, w)$ as in Theorem \ref{main theorem2}, the idea is that $\I(U, w)$ can be approximated by cutting and pasting $(U, w)$ to $(U_{0}, w_{0})$ near $\infty$, and then cutting and pasting $w$ to $w_{0}$ near $0$ and the axis $\Ga$. An idea of this type is used in \cite{ChrusLiWe}, but we take a different approximation here.
\begin{proposition}\label{approx prop}
Under conditions (\ref{decay of U at infinity})(\ref{rate of U at 0})(\ref{decay of Dw at infinity})(\ref{rate of Dw at 0})(\ref{decay of Dw at axis}) and (\ref{w on axis}) for $(U, w)$, for any small $c_{0}>0$ we can find $(U_{\de}, w_{\de, \ep})$ for small $\ep\ll\de\ll 1$, such that:
$$U_{\de}\equiv U,\ r<1/\de;\ w_{\de, \ep}\equiv w,\ \rho>\sqrt{\ep},\ 2\de<r<1/\de,$$
$$(U_{\de}, w_{\de, \ep})=(U_{0}, w_{0}), \ r>2/\de;\ w_{\de}\equiv w_{0},\ x\in B_{\de}\cup\mC_{\de, \ep},$$
where $\mC_{\de, \ep}$ is defined in (\ref{mC}), and
$$|\I(U, w)-\I(U_{\de}, w_{\de, \ep})|< c_{0}.$$
\end{proposition}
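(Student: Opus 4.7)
The plan is to construct $(U_\de, w_{\de,\ep})$ by three successive smooth cut-and-pastes and to bound the change of $\I$ at each stage using the decay data of Section \ref{review of chrusciel reduction} together with the extreme Kerr asymptotics recorded in Remark \ref{integrability of (U,w)}. I will work with an outer radial cutoff $\chi_\de^{\mathrm{out}}(r)$ equal to $1$ on $\{r\le 1/\de\}$ and $0$ on $\{r\ge 2/\de\}$; an inner radial cutoff $\chi_\de^{\mathrm{in}}(r)$ equal to $0$ on $\{r\le\de\}$ and $1$ on $\{r\ge 2\de\}$; and an axial cutoff $\psi_\ep(\rho)$ equal to $0$ on $\{\rho\le\sqrt{\ep}\}$ and $1$ on $\{\rho\ge 2\sqrt{\ep}\}$, with gradients of orders $\de$, $\de^{-1}$, and $\ep^{-1/2}$ respectively.

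In Step 1 (infinity) I set $U_\de:=\chi_\de^{\mathrm{out}}U+(1-\chi_\de^{\mathrm{out}})U_0$ and an intermediate $\ti w:=\chi_\de^{\mathrm{out}}w+(1-\chi_\de^{\mathrm{out}})w_0$. The change $|\I(U,w)-\I(U_\de,\ti w)|$ is supported on $\{1/\de<r<2/\de\}$ and splits into pure tail integrals of $|DU|^2$ and $e^{4U}|Dw|^2/\rho^4$ together with their $(U_0,w_0)$ analogs, plus cross terms controlled by $|D\chi_\de^{\mathrm{out}}|^2\bigl(|U-U_0|^2+|w-w_0|^2\bigr)$. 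The tails vanish as $\de\to 0$ by the integrability computations in Remark \ref{integrability of (U,w)}, while the cross terms are $o(1)$ by the pointwise decay $U-U_0=o(r^{-1/2})$ and the bound $|w-w_0|=O(r^{1-\la})$ obtained by integrating (\ref{decay of Dw at infinity}) from infinity. In Step 2 (origin) I replace $\ti w$ inside $B_{2\de}$ by $\bar w:=\chi_\de^{\mathrm{in}}\ti w+(1-\chi_\de^{\mathrm{in}})w_0$; no further modification of $U_\de$ is needed since it already equals $U$ for $r<1/\de$. Using $U=2\log r+o(r^{1/2})$ together with (\ref{rate of Dw at 0}) and (\ref{rate of U0 Dw0}) one finds $e^{4U}|Dw|^2/\rho^4=O(r^{2\la-4})$ and likewise for $w_0$, so the tails $\int_{B_{2\de}}e^{4U}|Dw|^2/\rho^4$ are $o_\de(1)$ for $\la>1/2$; the cutoff cross term is absorbed by the pointwise bound $|w-w_0|\lesssim \rho^3 r^{\la-6}$ obtained by integrating (\ref{rate of Dw at 0}) from the axis, since the factor $\rho^3$ pays for the $\rho^{-4}$ weight when combined with $dx=\rho\,d\rho\,d\varphi\,dz$ on the spherical shell $\{\de<r<2\de\}$.

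In Step 3 (axis in the intermediate shell) I set $w_{\de,\ep}:=w_0+\psi_\ep(\bar w-w_0)$, which alters $\bar w$ only on the tube $\mC_{\de,\ep}:=\{\rho<2\sqrt{\ep}\}\cap\{2\de<r<1/\de\}$. On this set (\ref{decay of Dw at axis}) gives $|D(w-w_0)|_\de\le C(\de)\rho^2$, and combined with the axis normalization $w|_{\mA_i}=w_i=w_0|_{\mA_i}$ from (\ref{w on axis}), radial integration in $\rho$ yields the cubic pointwise vanishing $|w-w_0|\le C(\de)\rho^3$ on $\mC_{\de,\ep}$. Substituting this and $|D\psi_\ep|^2\lesssim\ep^{-1}$,
\[
\int_{\mC_{\de,\ep}}\frac{e^{4U_\de}}{\rho^4}\bigl|D\bigl(\psi_\ep(\bar w-w_0)\bigr)\bigr|^2\,dx \;\le\; C(\de)\int_{\sqrt{\ep}<\rho<2\sqrt{\ep},\,r<1/\de}\frac{\ep^{-1}\rho^6+\rho^4}{\rho^4}\,dx \;=\; O(\ep/\de),
\]
and the analogous bound controls the matching integral of $|D\bar w|^2$ on $\mC_{\de,\ep}$. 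Choosing first $\de$ small enough that Steps 1--2 contribute less than $c_0/2$, and then $\ep\ll\de$ so that Step 3 contributes less than $c_0/2$, yields the proposition. The main obstacle is precisely this axis step: the singular weight $\rho^{-4}$ is only compensated by the \emph{cubic} vanishing of $w-w_0$ at $\Ga$, and the $\ep^{-1/2}$ gradient of $\psi_\ep$ cancels only after pairing with the resulting $\rho^6$ factor and integrating against the thin tube of volume $O(\ep/\de)$; this is what forces the hierarchy $\ep\ll\de\ll 1$ chosen above.
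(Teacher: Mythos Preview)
Your three-step strategy matches the paper's, and Steps~2 and~3 are essentially correct (indeed your linear axial cutoff on $[\sqrt{\ep},2\sqrt{\ep}]$ works just as well as the paper's logarithmic cutoff on $[\ep,\sqrt{\ep}]$, once you exploit the cubic vanishing $|w-w_0|\le C(\de)\rho^3$; note however that the transition zone $[\sqrt{\ep},2\sqrt{\ep}]$ does not literally meet the requirement $w_{\de,\ep}\equiv w$ for $\rho>\sqrt{\ep}$ in the statement---a relabeling fixes this).

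There is a genuine gap in Step~1. The cutoff cross term for $w$ carries the singular weight $e^{4U_\de}/\rho^4$, so what you must control on the shell $\{1/\de<r<2/\de\}$ is
\[
\int_{1/\de<r<2/\de}\frac{e^{4U_\de}}{\rho^4}\,|D\chi_\de^{\mathrm{out}}|^2\,(w-w_0)^2\,dx.
\]
Your bound $|w-w_0|=O(r^{1-\la})$ ``by integrating (\ref{decay of Dw at infinity}) from infinity'' is both unjustified and insufficient. It is unjustified because for $\la\in(3/2,3)$ the radial integrand $|Dw|\le C\rho^2 r^{-\la}\le Cr^{2-\la}$ is not integrable as $r\to\infty$, so no bound ``from infinity'' is available. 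It is insufficient because a bound with no $\rho$-factor leaves the integrand comparable to $\de^2 r^{2-2\la}\rho^{-4}$, and $\int\rho^{-4}\,dx$ diverges along the axis. The correct estimate---which you yourself use in Steps~2 and~3---is obtained by integrating $|D(w-w_0)|\le C\rho^2 r^{-\la}$ from the axis using the normalization $(w-w_0)|_\Ga=0$ from (\ref{w on axis}); this yields $|w-w_0|\lesssim\rho^3 r^{-\la}$, and then the cross term is bounded by
\[
C\de^2\int_{1/\de<r<2/\de}\rho^2 r^{-2\la}\,dx\;\sim\;\de^{2\la-3}\longrightarrow 0\quad(\la>3/2).
\]
This is exactly where the hypothesis $\la>3/2$ enters at infinity. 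Once you replace the ``from infinity'' bound by the axis-integrated bound $\rho^3 r^{-\la}$, your Step~1 goes through and the rest of your argument is fine.
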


The proof is a combination of the following three lemmas. Let us define a family of smooth functions $\vp^{1}_{\de}\in C^{\infty}_{c}(\R^{3})$:
\begin{equation}\label{cutoff function1}
\vp^{1}_{\de}(r) \left\{ \begin{array}{ll}
=1 & \textrm{if $r\leq 1/\de$}\\
|D\vp^{1}_{\de}|\leq 2\de & \textrm{if $1/\de< r< 2/\de$}\\
=0 & \textrm{if $r\geq 2/\de$.}
\end{array} \right.
\end{equation}
Now define
$$U^{1}_{\de}=U_{0}+\vp^{1}_{\de}(U-U_{0}),\ \ w^{1}_{\de}=w_{0}+\vp^{1}_{\de}(U-U_{0}).$$
Then $(U^{1}_{\de}, w^{1}_{\de})\equiv (U_{0}, w_{0})$ outside $B_{2/\de}$.

\begin{lemma}\label{approx lemma1} We have
$\lim_{\de\rightarrow 0}\I(U^{1}_{\de}, w^{1}_{\de})=\I(U, w)$.
\end{lemma}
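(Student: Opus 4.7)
The strategy is to split $\R^3$ into three pieces: the inner region $B_{1/\de}$, the transition annulus $A_\de := \{1/\de < r < 2/\de\}$, and the outer region $\R^3\setminus B_{2/\de}$. On $B_{1/\de}$ we have $\vp^1_\de\equiv 1$ and thus $(U^1_\de, w^1_\de)\equiv (U,w)$, so this region contributes nothing to the difference $\I(U^1_\de, w^1_\de)-\I(U, w)$. On $\R^3\setminus B_{2/\de}$ we have $\vp^1_\de\equiv 0$, so $(U^1_\de, w^1_\de)\equiv (U_0, w_0)$, and the contribution to the difference is
$$\int_{\{r>2/\de\}}\Bigl(|DU_0|^2+\tfrac{e^{4U_0}}{\rho^4}|Dw_0|^2\Bigr)dx-\int_{\{r>2/\de\}}\Bigl(|DU|^2+\tfrac{e^{4U}}{\rho^4}|Dw|^2\Bigr)dx,$$
which tends to $0$ as $\de\to 0$ by the integrability of $\I(U,w)$ and $\I(U_0,w_0)$ verified in Remark \ref{integrability of (U,w)}.

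The core of the proof is the annulus $A_\de$. Using the product rule,
$$DU^1_\de=\vp^1_\de\,DU+(1-\vp^1_\de)DU_0+(U-U_0)D\vp^1_\de,$$
and an identical formula for $Dw^1_\de$, I would expand $|DU^1_\de|^2$ and $|Dw^1_\de|^2$ and bound each by $C(|DU|^2+|DU_0|^2+(U-U_0)^2|D\vp^1_\de|^2)$ and the analogous expression for $w$, using $0\le\vp^1_\de\le 1$ and Cauchy--Schwarz on the cross terms. The contributions from $|DU|^2$, $|DU_0|^2$, $\frac{e^{4U}}{\rho^4}|Dw|^2$, and $\frac{e^{4U_0}}{\rho^4}|Dw_0|^2$ over $A_\de$ vanish as $\de\to 0$, again by integrability. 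Note also that since $U, U_0=o(r^{-1/2})$ at infinity, $e^{4U^1_\de}$ is uniformly bounded on $A_\de$.

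The delicate terms are the transition contributions involving $|D\vp^1_\de|^2=O(\de^2)$. For the $U$-part, on $A_\de$ we have $|U-U_0|^2=o(r^{-1})=o(\de)$ by (\ref{decay of U at infinity}), giving an integrand of size $o(\de^3)$ on a set of volume $O(\de^{-3})$, hence $o(1)$. For the $w$-part, I would use the normalization (\ref{w on axis}) that $w$ and $w_0$ agree on the axis; combined with the gradient bound $|Dw|_\de\leq C\rho^2 r^{-\la}$ from (\ref{decay of Dw at infinity}) (and its analogue for $w_0$ from (\ref{decay of Dw0})), integrating in $\rho$ from the axis yields $|w-w_0|\leq C\rho^3 r^{-\la}$ at infinity. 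Then
$$\int_{A_\de}\frac{e^{4U^1_\de}(w-w_0)^2|D\vp^1_\de|^2}{\rho^4}\,dx\le C\de^2\int_{A_\de}\rho^2 r^{-2\la}\,dx\le C\de^2\int_{1/\de}^{2/\de}r^{4-2\la}\,dr= O(\de^{2\la-3}),$$
which tends to $0$ precisely because $\la>3/2$.

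\textbf{Main obstacle.} The most delicate point is this last estimate on the transition annulus for the $w$-term, since it is here that the improved decay threshold $\la>3/2$ (rather than the $\la>5/2$ required in \cite{ChrusLiWe}) enters critically. It hinges on converting the gradient bound (\ref{decay of Dw at infinity}) into a pointwise bound on $w-w_0$ via radial integration from the axis, exploiting the normalization (\ref{w on axis}). Once this is in place, all remaining estimates are standard applications of the product rule and the integrability observations in Remark \ref{integrability of (U,w)}.
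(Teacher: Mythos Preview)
Your proposal is correct and follows essentially the same approach as the paper: the same three-region decomposition, the same product-rule expansion on the transition annulus, the same use of the uniform bound on $e^{4U^1_\de}$, and the same key pointwise estimate $|w-w_0|\le C\rho^{3}r^{-\la}$ obtained by integrating (\ref{decay of Dw at infinity}) and (\ref{decay of Dw0}) from the axis using (\ref{w on axis}), yielding the decisive $O(\de^{2\la-3})$ bound. The paper's write-up is organized slightly differently (it shows $\I(U^1_\de,w^1_\de)\to\I(U,w)$ rather than bounding the difference directly), but the content is identical.
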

\begin{proof} We separate into three terms
$$\I(U^{1}_{\de}, w^{1}_{\de})=\underbrace{\int_{r\leq 1/\de}}_{I_{1}}+\underbrace{\int_{1/\de<r<2/\de}}_{I_{2}}+\underbrace{\int_{r\geq 2/\de}}_{I_{3}}\big[|DU^{1}_{\de}|^{2}+\frac{e^{4U^{1}_{\de}}}{\rho^{4}}|Dw^{1}_{\de}|^{2}\big]dx.$$
By the dominated convergence theorem(DCT\footnote{We will abbreviate DCT as dominant convergence theorem in the follow.}),
$$I_{1}=\int_{r\leq 1/\de}[|DU|^{2}+\frac{e^{4U}}{\rho^{4}}|Dw|^{2}]\rightarrow \I(U, w),$$
and
$$I_{3}=\int_{r\geq 2/\de}[|DU_{0}|^{2}+\frac{e^{4U_{0}}}{\rho^{4}}|Dw_{0}|^{2}\big]dx\rightarrow 0.$$
$$I_{2}=\underbrace{\int_{1/\de<r<2/\de}|DU^{1}_{\de}|^{2}dx}_{I_{21}}+\underbrace{\int_{1/\de<r<2/\de}\frac{e^{4U^{1}_{\de}}}{\rho^{4}}|Dw^{1}_{\de}|^{2}dx}_{I_{22}},$$
where
$$I_{21} \leq 2\int_{1/\de<r<2/\de}|DU|^{2}+|DU_{0}|^{2}+2\int_{1/\de<r<2/\de}\underbrace{(U-U_{0})^{2}}_{\sim o(r^{-1})}\underbrace{|D\vp^{1}_{\de}|^{2}}_{\leq 4\de^{2}}dx.$$
The first term converges to $0$ by DCT and remark \ref{integrability of (U,w)}, and the second term is asymptotic to $o(1)$ since $r\sim\de$ in this region, so it also converges to $0$. We also have
$$I_{22} \leq 4\int_{1/\de<r<2/\de}\frac{1}{\rho^{4}}(|Dw|^{2}+|Dw_{0}|^{2})+4\int_{1/\de<r<2/\de}\frac{1}{\rho^{4}}\underbrace{(w-w_{0})^{2}}_{\sim C\rho^{6}r^{-2\la}}\underbrace{|D\vp^{1}_{\de}|^{2}}_{\leq 4\de^{2}}dx.$$
This is because both $U$ and $U_{0}$ behave like $o(1)$ at infinity, so $e^{U^{1}_{\de}}$ is bounded by $2$ for $\de$ small enough. The first term converges to $0$ by DCT. The bound of 
$(w-w_{0})$ comes from the fact that $(w-w_{0})|_{\Ga}\equiv 0$ and an integration of (\ref{decay of Dw at infinity})(\ref{decay of Dw0}) along a line perpendicular to the axis $\Ga$. So the second term is asymptotic to $O(\de^{2\la-3})$ since $r\sim\de$, which converges to $0$ when $\la>3/2$. So we can get the limit by combining these results.
\end{proof}

Now we can first assume $U=U_{0}$ and $w=w_{0}$ outside a large ball $B_{R}$. Define a second family of smooth cutoff functions $\vp_{\de}\in C^{\infty}(\R^{3})$,
\begin{equation}\label{cutoff function2}
\vp_{\de}(r) \left\{ \begin{array}{ll}
=0 & \textrm{if $r\leq \de$}\\
|D\vp_{\de}|\leq 2/\de & \textrm{if $\de< r< 2\de$}\\
=1 & \textrm{if $r\geq 2\de$.}
\end{array} \right.
\end{equation}
We let
$$w_{\de}=w_{0}+\vp_{\de}(w-w_{0}).$$
Then $w_{\de}\equiv w_{0}$ inside the ball $B_{\de}$.

\begin{lemma}\label{approx lemma2} We have the result
$\lim_{\de\rightarrow 0}\I(U, w_{\de})=\I(U, w).$
\end{lemma}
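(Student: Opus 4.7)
The plan is to mirror the structure of Lemma \ref{approx lemma1}, replacing the ``cut and paste near infinity'' with a ``cut and paste near the origin and axis'' implemented by the cutoff $\vp_\de$. First I would split the domain into the three regions $\{r\leq\de\}$, $\{\de<r<2\de\}$ (where $\vp_\de$ varies), and $\{r\geq 2\de\}$. Since $U$ is unchanged and $w_\de\equiv w$ outside $B_{2\de}$, the outer region cancels from $\I(U,w_\de)-\I(U,w)$. On $\{r\leq\de\}$, $w_\de\equiv w_0$, so the difference reduces to $\int_{r\leq\de}\frac{e^{4U}}{\rho^4}(|Dw_0|^2-|Dw|^2)\,dx$, and each piece individually tends to $0$ as $\de\to 0$ by dominated convergence, using the integrability recorded in Remark \ref{integrability of (U,w)} for the $w$--term and the explicit Kerr asymptotics (\ref{rate of U0 Dw0}) combined with $U=2\log r+o(r^{1/2})$ for the $w_0$--term (which together give $\frac{e^{4U}}{\rho^4}|Dw_0|^2\sim r^2$ near the origin).

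The heart of the argument is the annulus $\{\de<r<2\de\}$. I would expand
\[
Dw_\de=(1-\vp_\de)Dw_0+\vp_\de Dw+(w-w_0)D\vp_\de,
\]
and bound $|Dw_\de|^2$ by three times the sum of the squares. Integrated against $\frac{e^{4U}}{\rho^4}$, the first two contributions vanish as $\de\to 0$ by dominated convergence, since the annulus shrinks and the full integrals $\I(U,w)$ and $\I(U,w_0)$ are finite. What remains is the cross term
\[
J_\de:=\int_{\de<r<2\de}\frac{e^{4U}}{\rho^4}(w-w_0)^2|D\vp_\de|^2\,dx,
\]
where the cutoff blowup $|D\vp_\de|\leq 2/\de$ meets the squared difference $(w-w_0)^2$.

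To control $J_\de$ I would use the normalization (\ref{w on axis}): $w\equiv w_0$ on $\Ga$, so integrating $|D_\rho(w-w_0)|$ along a segment perpendicular to the axis starting from $\Ga$ and invoking the bounds (\ref{rate of Dw at 0}) and (\ref{rate of U0 Dw0}) gives the pointwise estimate $(w-w_0)^2\lesssim\rho^6 r^{2\la-12}$ near the origin. Combined with $\frac{e^{4U}}{\rho^4}\sim r^8/\rho^4$ coming from (\ref{rate of U at 0}), the integrand in $J_\de$ is of size $\rho^2 r^{2\la-4}/\de^2$; a spherical-coordinate integration over $\de<r<2\de$ then produces $J_\de=O(\de^{2\la-1})$, which vanishes as $\de\to 0$ for any $\la>1/2$, in particular for the assumed range $\la>3/2$. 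Combining the three regions yields $\I(U,w_\de)\to\I(U,w)$.

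The main obstacle, and the reason the argument still works under the weakened decay $\la>3/2$ rather than $\la>5/2$, is precisely the cross term $J_\de$: one must simultaneously exploit the vanishing of $w-w_0$ on $\Ga$ (to upgrade the pointwise $|Dw|$ decay into a useful bound for $w-w_0$ itself) and the strong $r^8$ gain coming from $e^{4U}$ near the origin, to beat both the $\rho^{-4}$ singularity in the integrand and the $\de^{-2}$ coming from $|D\vp_\de|^2$. Once $J_\de\to 0$ is established, the remaining DCT steps are routine.
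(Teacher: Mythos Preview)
Your proposal is correct and follows essentially the same approach as the paper: the same three-region decomposition, the same DCT arguments for the regions where the integrand is unchanged or dominated, and exactly the same treatment of the cross term $J_\de$ via the pointwise bound $(w-w_0)^2\lesssim\rho^6 r^{2\la-12}$ combined with $e^{4U}\sim r^8$, yielding $J_\de=O(\de^{2\la-1})$. Your closing observation about the role of the $r^8$ gain from $e^{4U}$ is precisely the content of the paper's remark following the lemma.
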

\begin{proof} We consider three terms
$$\I(U, w_{\de})=\underbrace{\int_{r\leq \de}}_{I_{1}}+\underbrace{\int_{\de<r<2\de}}_{I_{2}}+\underbrace{\int_{r\geq 2\de}}_{I_{3}}|DU|^{2}+\frac{e^{4U}}{\rho^{4}}|Dw_{\de}|^{2}dx.$$
By DCT,
$$I_{3}=\int_{r\geq 2\de}|DU|^{2}+\frac{e^{4U}}{\rho^{4}}|Dw|^{2}\rightarrow \I(U, w).$$
On the other hand
$$I_{1}=\int_{r\leq\de}|DU|^{2}+\frac{1}{\rho^{4}}\underbrace{e^{4U}}_{\sim r^{8}}\underbrace{|Dw_{0}|^{2}}_{\sim\frac{\rho^{4}}{r^{6}}}dx.$$
The first term converges to $0$ by DCT. The second term, where we use (\ref{rate of U at 0})(\ref{rate of U0 Dw0}), is asymptotic to $\de^{5}$, hence converges to $0$. To handle $I_2$ we
estimate
\begin{displaymath}
\begin{split}
I_{2} &\leq\int_{\de<r<2\de}|DU|^{2}+2\frac{e^{4U}}{\rho^{4}}|Dw|^{2}+2\int_{\de<r<2\de}\frac{e^{4U}}{\rho^{4}}|Dw_{0}|^{2}\\
      &+2\int_{\de<r<2\de}\frac{1}{\rho^{4}}\underbrace{e^{4U}}_{\sim r^{8}}\underbrace{(w-w_{0})^{2}}_{\sim\rho^{6}r^{2\la-12}}\underbrace{|D\vp_{\de}|^{2}}_{\leq 4/\de^{2}}dx.
\end{split}
\end{displaymath}
The first term converges to $0$ by DCT. The second term converges to $0$ by the same argument as for $I_{1}$. The bound of $(w-w_{0})$ comes from $(w-w_{0})|_{\Ga}\equiv 0$ and an integration of (\ref{rate of Dw at 0})(\ref{rate of U0 Dw0}) along a line perpendicular to the axis 
$\Ga$. The last term is asymptotic to $O(\de^{2\la-1})$ since $r\sim\de$, which converges to $0$. Combining these together, we get the limit.
\end{proof}

\begin{remark}
The reason we can do this is because the blow-up rate($\rho^{4}r^{-6}$) of $|Dw_{0}|^{2}$ is smaller than that($\rho^{4}r^{2\la-12}$) of $|Dw|^{2}$ near the origin $0$, while the decay rate ($r^{8}$) of $e^{4U}$ is larger than that ($r^{4}$) of $e^{4U_{0}}$, so $|Dw_{0}|^{2}$ is also integrable with respect to $\frac{e^{4U}}{\rho^{4}}dx$ near the origin $0$.
\end{remark}

Besides assuming $(U, w)\equiv(U_{0}, w_{0})$ outside a large ball $B_{R}$, we can also assume $w\equiv w_{0}$ inside $B_{\de}$. Now define a third family of cutoff functions $\phi_{\ep}\in C^{\infty}(\R^{3})$,
\begin{equation}\label{cutoff function3}
\phi_{\ep}(\rho)= \left\{ \begin{array}{ll}
0 & \textrm{if $\rho\leq \ep$}\\
\frac{\ln(\rho/\ep)}{\ln(\sqrt{\ep}/\ep)} & \textrm{if $\ep<\rho<\sqrt{\ep}$}\\
1 & \textrm{if $\rho\geq\sqrt{\ep}$}
\end{array} \right.
\end{equation}
Define
$$w_{\ep}=w_{0}+\phi_{\ep}(w-w_{0}).$$
Define the sets
\begin{equation}\label{mC}
\mC_{\de, \ep}=\{\rho\leq\ep\}\cap\{\de\leq r\leq 2/\de\},
\end{equation}
\begin{equation}\label{mW}
\mW_{\de, \ep}=\{\ep\leq\rho\leq\sqrt{\ep}\}\cap\{\de\leq r\leq 2/\de\}.
\end{equation}
So we have $w_{\ep}\equiv w_{0}$ in $\mC_{\de, \ep}\cup B_{\de}$.

\begin{lemma}\label{approx lemma3} We have the limit 
$\lim_{\ep\rightarrow 0}\I(U, w_{\ep})\rightarrow \I(U, w).$
\end{lemma}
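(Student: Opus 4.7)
The plan is to mimic the structure of Lemma~\ref{approx lemma2}, stratifying the integration domain by $\rho$ rather than by $r$. Because $(U, w) \equiv (U_0, w_0)$ outside $B_R$ and $w \equiv w_0$ inside $B_\de$ by the previous reductions, $w_\ep$ coincides with $w$ on $B_\de \cup (\R^3 \setminus B_R)$, so the only nontrivial contribution to $\I(U, w_\ep) - \I(U, w)$ sits in the annular region $A := \{\de \le r \le R\}$. Inside $A$, I split according to the level sets of $\phi_\ep$: the cylinder $\mC_{\de, \ep}$ where $\phi_\ep \equiv 0$ and $w_\ep = w_0$; the shell $\mW_{\de, \ep}$ where $\phi_\ep$ transitions; and the exterior $\{\rho \ge \sqrt{\ep}\} \cap A$ where $\phi_\ep \equiv 1$ and $w_\ep = w$.

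On the exterior piece the integrand agrees with that of $\I(U, w)$, so the dominated convergence theorem (DCT) handles it as $\ep \to 0$. On $\mC_{\de, \ep}$ the integrand becomes $|DU|^{2} + \frac{e^{4U}}{\rho^{4}}|Dw_{0}|^{2}$; the first term vanishes by DCT because $|DU|^{2} \in L^{1}$ and $|\mC_{\de, \ep}| \to 0$. For the second, the bound $|Dw_{0}|_{\de} \le C(\de)\rho^{2}$ on this compact axial region (an application of (\ref{decay of Dw at axis}) to the smooth extreme Kerr data) yields $\frac{|Dw_{0}|^{2}}{\rho^{4}} \le C(\de)^{2}$, so this contribution is bounded by $C(\de)^{2} e^{4\|U\|_{\infty}} |\mC_{\de, \ep}| = O(\ep^{2}/\de) \to 0$.

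The main obstacle is the transition shell $\mW_{\de, \ep}$. Expanding
\begin{equation*}
Dw_\ep = (w - w_0) D\phi_\ep + \phi_\ep Dw + (1 - \phi_\ep) Dw_0,
\end{equation*}
I get $|Dw_\ep|^{2} \le 3|D\phi_\ep|^{2}(w - w_0)^{2} + 3|Dw|^{2} + 3|Dw_0|^{2}$. The contributions from $|Dw|^{2}$ and $|Dw_0|^{2}$ vanish by DCT because $|\mW_{\de, \ep}| \to 0$ and both integrands are in $L^{1}$ on $A$. The decisive term involves $|D\phi_\ep|^{2} = \frac{4}{\rho^{2}(\log(1/\ep))^{2}}$. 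Since $w - w_0$ vanishes on $\Ga$ and $|D(w - w_0)|_{\de} \le C(\de)\rho^{2}$ on $\mW_{\de, \ep}$ (by (\ref{decay of Dw at axis}) and the comparable estimate for $w_0$), integrating along a segment perpendicular to the axis gives $(w - w_0)^{2} \le C(\de)^{2}\rho^{6}$. Therefore
\begin{equation*}
\frac{e^{4U}}{\rho^{4}} |D\phi_\ep|^{2}(w - w_0)^{2} \le \frac{C'(\de)}{(\log(1/\ep))^{2}},
\end{equation*}
bounded uniformly in $\rho$. Since $|\mW_{\de, \ep}| = O(\ep/\de)$, this term is $O(\ep/(\de (\log(1/\ep))^{2})) \to 0$. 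This is precisely the reason for the logarithmic cutoff in (\ref{cutoff function3}): it exactly offsets the $\rho^{-2}$ weight that a linear cutoff would leave behind. Summing the three pieces delivers $\lim_{\ep \to 0}\I(U, w_\ep) = \I(U, w)$.
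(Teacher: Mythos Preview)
Your argument is correct and follows essentially the same route as the paper: the same three-region decomposition ($\mC_{\de,\ep}$, $\mW_{\de,\ep}$, and the exterior), DCT on the outer pieces, and the same key estimate on the shell where the logarithmic cutoff absorbs the $\rho^{-2}$ from $|D\phi_\ep|^2$. The only cosmetic difference is that you record the sharper bound $(w-w_0)^2\le C(\de)\rho^{6}$ from integrating $|D(w-w_0)|\le C(\de)\rho^{2}$, whereas the paper writes $(w-w_0)^2\le C\rho^{4}$ and obtains the slightly coarser shell bound $C/|\ln\ep|$; either estimate suffices.
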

\begin{proof} We consider three terms
$$\I(U, w_{\ep})=\underbrace{\int_{\mC_{\de, \ep}}}_{I_{1}}+\underbrace{\int_{\mW_{\de, \ep}}}_{I_{2}}+\underbrace{\int_{\R^{3}\setminus\{\mC_{\de, \ep}\cup\mW_{\de, \ep}\}}}_{I_{3}}|DU|^{2}+\frac{e^{4U}}{\rho^{4}}|Dw_{\ep}|^{2}dx.$$
By $DCT$, $I_{3}\rightarrow\I(U, w)$.
$$I_{1}=\int_{\mC_{\de, \ep}}|DU|^{2}+\frac{e^{4U}}{\rho^{4}}\underbrace{|Dw_{0}|^{2}}_{\leq C\rho^{4}}dx.$$
The first term converges to $0$ by DCT, while the bound $|Dw_{0}|_{\de}$ come from (A.10) of \cite{ChrusLiWe}. The second term also converges to $0$ by DCT. To handle $I_2$ we estimate
\begin{displaymath}
\begin{split}
I_{2} &\leq \int_{\mW_{\de, \ep}}|DU|^{2}+2\frac{e^{4U}}{\rho^{4}}|Dw|^{2}+2\int_{\mW_{\de, \ep}}\frac{e^{4U}}{\rho^{4}}|Dw_{0}|^{2}\\
      &+2\int_{\mW_{\de, \ep}}\frac{e^{4U}}{\rho^{4}}\underbrace{(w-w_{0})^{2}}_{\leq C\rho^{4}}\underbrace{|D\phi_{\ep}|^{2}}_{\sim 1/(\rho\ln\ep)^{2}}dx.
\end{split}
\end{displaymath}
The first two terms converge to $0$ by DCT and the above argument as $\ep\rightarrow 0$. The bound of $(w-w_{0})$ is gotten by integrating $\partial_{\rho}(w-w_{0})$ along a line perpendicular to $\Ga$ with $(w-w_{0})|_{\Ga}\equiv 0$. So the last term is bounded by $C/|\ln\ep|$, which converges to $0$ as $\ep\rightarrow 0$. We have completed the proof.
\end{proof}


\subsection{Convexity and gap inequality}\label{convexity and gap inequality}

As in the first section, we denote
$$U=U_{0}+\al,\ \ w=w_{0}+y.$$
By Proposition \ref{approx prop}, we can first assume $(\al, y)$ is compactly supported in $B_{2/\de}$, and furthermore $y$ is compactly supported in $\Om_{\de, \ep}$, where
\begin{equation}\label{Omega de ep}
\Om_{\de, \ep}=\{\de<r<2/\de,\ \rho>\ep\}.
\end{equation}
Denote
\begin{equation}\label{mA de ep}
\mA_{\de, \ep}=B_{2/\de}\setminus\Om_{\de, \ep}.
\end{equation}
Now connect $(X=\rho^{2}e^{-2U}, 2w=2w_{0}+2y)$ to the Extreme Kerr data $(X_{0}=\rho^{2}e^{-2U_{0}}, Y_{0}=2w_{0})$(\ref{Extreme Kerr}) by a geodesic family $(X_{t}, 2w_{t})$ in $\mH^2$. Let $U_{t}=-\frac{1}{2}\ln X_{t}+\log\rho$ and $y_{t}=w_{t}-w_{0}$. Hence $w_{t}\equiv w_{0}$ in a neighborhood of $\mA_{\de, \ep}$, so $U_{t}=U_{0}+t\al$ in a neighborhood of $\mA_{\de, \ep}$ as discussed in Section 2. Then using the notation of Theorem \ref{main theorem2} we have the following result.
\begin{lemma}\label{convexity2} We have
\begin{equation}
\frac{d^{2}}{dt^{2}}\I(U_{t}, w_{t}) \geq \frac{1}{2}\int_{\R^{3}}|\nabla[d_{-1}\big((U, w), (U_{0}, w_{0})\big)]|^{2}dx.
\end{equation}
\end{lemma}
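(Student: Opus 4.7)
The plan is to follow the template of Lemma \ref{convexity1}, adapted to the Chru\'sciel parameterization $X = \rho^{2}e^{-2U}$, $Y = 2w$. Under this substitution maps land in $\mH^{2}$, and a direct calculation gives $\I(U,w) = \frac{1}{4}\M(-2U, 2w)$, so $\I$ is simply a rescaling of $\M$. In particular, on any compact regular domain $\Om\subset\R^{3}\setminus\Ga$, the identity (\ref{E and M}) becomes
\begin{equation*}
4\I_{\Om}(U, w) = E_{\Om}(X, Y) - \int_{\partial\Om}\frac{\partial g}{\partial n}(g - 4U)\,d\sigma.
\end{equation*}

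First I would split $\R^{3} = \Om_{\de,\ep}\cup\mA_{\de,\ep}$ and estimate the two second variations separately. On the regular region $\Om_{\de,\ep}$, since $y$ is compactly supported inside it, $y_{t}\equiv 0$ in a neighborhood of $\partial\Om_{\de,\ep}$, so the hyperbolic geodesic from $(X_{0}, Y_{0})$ to $(X, Y)$ restricted to that neighborhood stays in the fiber $\{Y = Y_{0}\}$ and is given explicitly by $X_{t} = X_{0}e^{-2t\al}$, equivalently $U_{t} = U_{0} + t\al$. Consequently the boundary integral in the displayed identity is linear in $t$, so its second derivative vanishes, and
\begin{equation*}
\frac{d^{2}}{dt^{2}}\I_{\Om_{\de,\ep}}(U_{t}, w_{t}) = \frac{1}{4}\frac{d^{2}}{dt^{2}}E_{\Om_{\de,\ep}}(X_{t}, Y_{t}) \geq \frac{1}{2}\int_{\Om_{\de,\ep}}|\nabla d_{-1}\bigl((X, Y),(X_{0}, Y_{0})\bigr)|^{2}\,dx,
\end{equation*}
by the refined second variation (\ref{refined second variation for E}) applied to the Dirichlet energy of maps into the negatively curved target $\mH^{2}$.

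On the singular region $\mA_{\de,\ep}$, where $w_{t}\equiv w_{0}$ and $U_{t}=U_{0}+t\al$ throughout, I would differentiate directly under the integral, paralleling (\ref{second part of second variation of M}):
\begin{equation*}
\frac{d^{2}}{dt^{2}}\I_{\mA_{\de,\ep}}(U_{t}, w_{t}) = \int_{\mA_{\de,\ep}}\Bigl(2|D\al|^{2} + 16\al^{2}\frac{e^{4U_{t}}}{\rho^{4}}|Dw_{0}|^{2}\Bigr)\,dx \geq 2\int_{\mA_{\de,\ep}}|D\al|^{2}\,dx.
\end{equation*}
Along the vertical hyperbolic geodesic the distance satisfies $d_{-1}\bigl((X, Y),(X_{0}, Y_{0})\bigr) = 2|\al|$, so $|\nabla d_{-1}|^{2} = 4|\nabla\al|^{2}$ and the last lower bound equals $\frac{1}{2}\int_{\mA_{\de,\ep}}|\nabla d_{-1}|^{2}\,dx$. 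Adding the two contributions gives the claimed inequality.

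The main obstacle is justifying that the boundary integral is indeed linear in $t$ in a neighborhood of $\partial\Om_{\de,\ep}$; this relies on the fact, guaranteed by Proposition \ref{approx prop}, that $y$ is compactly supported strictly inside $\Om_{\de,\ep}$, which forces the hyperbolic geodesic to be the explicit vertical geodesic in a neighborhood of $\mA_{\de,\ep}$ and hence $U_{t} = U_{0} + t\al$ pointwise there. Once this is in place the algebra is parallel to (\ref{first part of second variation of M}) and (\ref{second part of second variation of M}), with the exchange of $d^{2}/dt^{2}$ and the integral sign justified by the uniform integrability of the integrands, exactly as in Section \ref{convexity}.
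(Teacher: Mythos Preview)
Your proposal is correct and follows essentially the same route as the paper: split $B_{2/\de}$ into $\Om_{\de,\ep}$ and $\mA_{\de,\ep}$, use (\ref{I and E}) and the refined second variation on the regular part, and differentiate directly on the singular part where $U_t=U_0+t\al$, $w_t=w_0$. One point to tighten: your claim that the exchange of $d^2/dt^2$ with the integral on $\mA_{\de,\ep}$ is ``exactly as in Section~\ref{convexity}'' is too quick, since here $\mA_{\de,\ep}$ contains $B_\de$ and $\al=U-U_0\sim\log r$ is unbounded near the origin (unlike in Section~\ref{convexity}, where the domain $A_{R,\ep}$ avoids the origin); the paper checks explicitly that $\al^{2}\rho^{-4}e^{4(U_0+t\al)}|Dw_0|^{2}\lesssim(\log r)^{2}r^{-2}$ is uniformly integrable on $B_\de$, using (\ref{rate of U at 0}), (\ref{rate of U0 Dw0}).
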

\begin{proof} We compute
\begin{displaymath}
\begin{split}
\frac{d^{2}}{dt^{2}}\I(U_{t}, w_{t}) &=\frac{d^{2}}{dt^{2}}\I_{B_{2/\de}}(U_{t}, w_{t})\\
                                     &=\underbrace{\frac{d^{2}}{dt^{2}}\I_{\Om_{\de, \ep}}(U_{t}, w_{t})}_{I_{1}}+\underbrace{\frac{d^{2}}{dt^{2}}\I_{\mA_{\de, \ep}}(U_{t}, w_{t})}_{I_{2}}.
\end{split}
\end{displaymath}
From equation (\ref{E and M}) we have $E_{\Om}(X, 2w)=4\I_{\Om}(U, w)+\int_{\partial\Om}\frac{\partial g}{\partial n}(g-4U)d\sigma$ on any compact domain $\Om$ of 
$\R^{3}\setminus \Ga$. The first term is calculated as in (\ref{first part of second variation of M}):
\begin{displaymath}
\begin{split}
I_{1} & =\frac{1}{4}\frac{d^{2}}{dt^{2}}E_{\Om_{\de, \ep}}(X_{t}, 2w_{t})+\frac{1}{4}\frac{d^{2}}{dt^{2}}\int_{\partial\Om_{\de, \ep}\cap\partial\mA_{\de, \ep}}\frac{\partial g}{\partial n}(g-4(U_{0}+t\al))d\sigma\\
      & \geq \frac{1}{2}\int_{\Om_{\de, \ep}}|\nabla[d_{-1}\big((U, w), (U_{0}, w_{0})\big)]|^{2}dx.
\end{split}
\end{displaymath}
Using the fact that $d_{-1}\big((U, w), (U_{0}, w_{0})\big)=2|\al|$ on $\mA_{\de, \ep}$, the second term is calculated as:
\begin{displaymath}
\begin{split}
I_{2} & =\frac{d^{2}}{dt^{2}}\int_{\mA_{\de, \ep}}|D(U_{0}+t\al)|^{2}+\frac{1}{\rho^{4}}e^{4(U_{0}+t\al)}|Dw_{0}|^{2}dx\\
      & =2\int_{\mA_{\de, \ep}}|D\al|^{2}+8\frac{1}{\rho^{4}}\al^{2}e^{4(U_{0}+t\al)}|Dw_{0}|^{2}dx\\
      & \geq \frac{1}{2}\int_{\mA_{\de, \ep}}|D[d_{-1}\big((U, w), (U_{0}, w_{0})\big)]|^{2}dx.
\end{split}
\end{displaymath}
Now let us check the validity for putting $\frac{d^{2}}{dt^{2}}$ into the $\int_{\mA_{\de, \ep}}$. We need to show the integrand after the second $``="$ is uniformly integrable for all $t\in[0, 1]$. The first term $\int_{\mA_{\de, \ep}}|D\al|^{2}dx$ is integrable since both $U, U_{0}\in H^{1}$. For the second term, let us separate $\mA_{\de, \ep}=B_{\de}\cup\mC_{\de, \ep}$. Then on $\mC_{\de, \ep}$, $\frac{1}{\rho^{4}}\underbrace{\al^{2}}_{bounded}\underbrace{e^{4(U_{0}+t\al)}}_{bounded}\underbrace{|Dw_{0}|^{2}}_{\sim\rho^{4}}$ is bounded, which is uniformly integrable. On $B_{\de}$, $\frac{1}{\rho^{4}}\underbrace{\al^{2}}_{\sim\log^{2}r}\underbrace{e^{4(U_{0}+t\al)}}_{\sim r^{4(1+t)}}\underbrace{|Dw_{0}|^{2}}_{\sim\rho^{4}r^{-6}}\leq  C(\log^{2}r)r^{-2}$ which is also uniformly integrable.

Combing these together, we get the convexity of the reduced energy $\I$ along geodesic paths.
\end{proof}

Let us check that the first variation at $(U_{0}, w_{0})$ is zero.
\begin{lemma}\label{vanishing of first variation2} We have
$\frac{d}{dt}\big{|}_{t=0}\I(U_{t}, w_{t})=0$.
\end{lemma}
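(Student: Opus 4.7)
The plan is to mirror the argument of the corresponding first-variation lemma in Section \ref{proof of main theorem1}. By Proposition \ref{approx prop} we may reduce to the situation where $\al = U - U_0 \in C^\infty_c(B_{2/\de})$ and $y = w - w_0 \in C^\infty_c(\Om_{\de,\ep})$. Bringing $\frac{d}{dt}$ inside the integral, as justified by uniform integrability in the spirit of (\ref{second part of second variation of M}), gives
\begin{displaymath}
\frac{d}{dt}\Big{|}_{t=0}\I(U_t,w_t) = \int_{\R^3}\Big[2\lan DU_0,DU'_0\ran + 4U'_0\frac{e^{4U_0}}{\rho^4}|Dw_0|^2 + 2\frac{e^{4U_0}}{\rho^4}\lan Dw_0,Dw'_0\ran\Big]dx,
\end{displaymath}
where $(U'_0, w'_0)$ denotes the initial velocity at $t=0$ of the geodesic $(U_t, w_t)$.

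Next, choose $0 < \la < \ep$ and $0 < \mu < \de$, and split $\R^3$ into the regular region $\Om' = B_{2/\de} \setminus (\mC_\la \cup B_\mu)$ and the singular region $\mC_\la \cup B_\mu$. On $\Om'$ the extreme Kerr pair $(U_0, w_0)$ is smooth and satisfies the Euler-Lagrange equations of $\I$, which via the substitution (\ref{relation between (x, Y) and (U, w)}) and the harmonicity of $g = 2\log\rho$ off $\Ga$ translate (\ref{equation for x0})(\ref{equation for y0}) into
\begin{displaymath}
\Delta U_0 = 2\frac{e^{4U_0}}{\rho^4}|Dw_0|^2, \qquad \mathrm{div}\Big(\frac{e^{4U_0}}{\rho^4}Dw_0\Big) = 0.
\end{displaymath}
Integration by parts on $\Om'$ uses these two equations to cancel the bulk integrand pairwise, leaving only boundary contributions on $\{\rho = \la\}$ and $\{r = \mu\}$ (the piece on $\partial B_{2/\de}$ vanishes by compact support of $\al$ and $y$). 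Because $\la < \ep$ and $\mu < \de$ while $y$ is supported in $\Om_{\de,\ep}$, formula (\ref{geodesic near axis}) gives $w'_0 \equiv 0$ and $U'_0 = \al$ in a neighborhood of each boundary set, so the $w$-boundary terms vanish identically; the surviving $U$-boundary integrals $2\int |\al|\,|\partial_n U_0|\, d\sigma$ are $O(\la)$ on $\{\rho = \la\}$ and $O(\mu|\log\mu|)$ on $\{r = \mu\}$, using the Kerr asymptotics (\ref{rate of U0 Dw0}) and the $\log r$ behavior of $\al$ near the origin.

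On the singular region, $y \equiv 0$ in a neighborhood of $\mC_\la \cup B_\mu$, so formula (\ref{geodesic near axis}) gives $U_t = U_0 + t\al$ and $w_t = w_0$ there, whence
\begin{displaymath}
\frac{d}{dt}\Big{|}_{t=0}\I_{\mC_\la \cup B_\mu}(U_t, w_t) = \int_{\mC_\la \cup B_\mu}\Big[2\lan DU_0, D\al\ran + 4\al\frac{e^{4U_0}}{\rho^4}|Dw_0|^2\Big]dx.
\end{displaymath}
Both integrands belong to $L^1(B_{2/\de})$ -- the first by Cauchy-Schwarz using $DU_0, D\al \in L^2$, the second via the bounds collected in Remark \ref{integrability of (U,w)} -- so their integrals over the shrinking set $\mC_\la \cup B_\mu$ tend to $0$ as $\la, \mu \to 0$ by absolute continuity. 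Sending $\la, \mu \to 0$ completes the proof. The main technical point is verifying these boundary and absolute-continuity estimates despite the logarithmic singularity of both $U_0$ and $\al$ at the origin; the asymptotic rates in Remark \ref{integrability of (U,w)} and Appendix A of \cite{ChrusLiWe} make these estimates routine.
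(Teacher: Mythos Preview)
Your proposal is correct and follows essentially the same approach as the paper's own proof: differentiate under the integral, split into a regular region and a singular piece near the axis and origin, integrate by parts on the regular region using the Euler--Lagrange equations for $(U_0,w_0)$, and show that both the resulting boundary terms and the singular-region contributions vanish in the limit. The only cosmetic difference is that you interchange the roles of the symbols $\la$ and $\mu$ relative to the paper (your $\la$ is the cylinder radius and $\mu$ the ball radius, whereas the paper does the reverse), and the paper explicitly orders the limits---first shrinking the cylinder for fixed ball radius, then shrinking the ball---which makes the ``$O(\la)$ on $\{\rho=\la\}$'' estimate cleaner since one is then bounded away from the origin.
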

\begin{proof}
By taking $\mu\ll\ep$ and $\la\ll\de$,
$$\frac{d}{dt}\Big{|}_{t=0}\I(U_{t}, w_{t})=\underbrace{\int_{\Om_{\la, \mu}}}_{I_{1}}+\underbrace{\int_{\mA_{\la, \mu}}}_{I_{2}}[2\lan DU_{0}, DU_{0}^{\pr}\ran+4U^{\pr}_{0}\frac{e^{4U_{0}}}{\rho^{4}}|Dw_{0}|^{2}+2\frac{e^{4U_{0}}}{\rho^{4}}\lan Dw_{0}, Dw_{0}^{\pr}\ran]dx.$$
Using integration by parts and the fact that $(U_{0}, w_{0})$ satisfies the Euler-Lagrange equation for $\I$ and that $(U_{0}^{\pr}, w_{0}^{\pr})=(\al, 0)$ in a neighborhood of $\mA_{\la, \mu}$, we have
$$I_{1}=\int_{\partial \mA_{\la, \mu}}2\frac{\partial}{\partial n}U_{0}\cdot\al.$$
Now separating $\mA_{\la, \mu}=B_{\la}\cup\mC_{\la, \mu}$,
\begin{displaymath}
\begin{split}
\frac{d}{dt}\Big{|}_{t=0}\I(U_{t}, w_{t}) &=\underbrace{\int_{\partial \mC_{\la, \mu}}2\frac{\partial}{\partial n}U_{0}\cdot\al d\sigma}_{I_{1}}+\underbrace{\int_{\mC_{\la, \mu}}2\lan DU_{0}, D\al\ran+4\al\frac{e^{4U_{0}}}{\rho^{4}}|Dw_{0}|^{2} dx}_{I_{2}}\\
                                    &+\underbrace{\int_{\partial B_{\la}}2\frac{\partial}{\partial n}U_{0}\cdot\al d\sigma}_{I_{3}}+\underbrace{\int_{B_{\la}}2\lan DU_{0}, D\al\ran+4\al\frac{e^{4U_{0}}}{\rho^{4}}|Dw_{0}|^{2} dx}_{I_{4}}.
\end{split}
\end{displaymath}
Since the equation above is always true for all $\mu\ll\ep$ and $\la\ll\de$, we can take a limit by first letting $\mu\rightarrow 0$, and then $\la\rightarrow 0$. For fixed $\la\ll\de$, the integrands in both $I_{1}$ and $I_{2}$ are bounded, so $I_{1}, I_{2}\rightarrow 0$ as $\mu\rightarrow 0$. Now $\underbrace{\frac{\partial}{\partial n}U_{0}}_{\sim r^{-1}}\cdot\underbrace{\al}_{\sim\log r}\underbrace{d\sigma}_{\sim r^{2}d\sigma_{0}}\sim r\log r d\sigma_{0}\rightarrow 0$\footnote{$d\sigma_{0}$ is the volume form on standard sphere.} as $\la\rightarrow 0$, hence $I_{3}\rightarrow 0$. $I_{4}$ converges to $0$ as $\la\rightarrow 0$, since both $DU_{0}$ and $D\al$ are $L^{2}$ integrable, and $\frac{1}{\rho^{4}}\underbrace{\al}_{\sim\log r}\underbrace{e^{4(U_{0})}}_{\sim r^{4}}\underbrace{|Dw_{0}|^{2}}_{\sim\rho^{4}r^{-6}}\sim (\log r)r^{-2}$ is also uniformly integrable. We have finished the proof of the lemma.
\end{proof}

\noindent\textbf{Proof of Theorem \ref{main theorem2}:}
Combining Lemma \ref{convexity2} and Lemma \ref{vanishing of first variation2}, integrating as in Section \ref{proof of main theorem1}, and using the Sobolev inequality(see \cite{E}), we can get:
\begin{displaymath}
\begin{split}
\I(U_{\de}, w_{\de, \ep})-\I(U_{0}, w_{0}) &\geq\frac{1}{4}\int_{\R^{3}}\big{|}D[d_{-1}\big((U_{\de}, w_{\de, \ep}), (U_{0}, w_{0})\big)]\big{|}^{2}dx\\
               &\geq C\{\int_{\R^{3}}d_{-1}^{6}\big((U_{\de}, w_{\de, \ep}), (U_{0}, w_{0})\big)dx\}^{1/3}.
\end{split}
\end{displaymath}
We will first take the limit as $\ep\rightarrow 0$, and then $\de\rightarrow 0$, then the left hand side will converge to $\I(U, w)-\I(U_{0}, w_{0})$ by Proposition \ref{approx prop}. Now we will show that the right hand side converges to $\{\int_{\R^{3}}d_{-1}^{6}\big((U, w), (U_{0}, w_{0})\big)dx\}^{1/3}$. By the triangle inequality, it suffices to show the following.
\begin{lemma}\label{L6 convergence of approx U and w} We have 
$\int_{\R^{3}}d_{-1}^{6}\big((U_{\de}, w_{\de, \ep}), (U, w)\big)dx\rightarrow 0$.
\end{lemma}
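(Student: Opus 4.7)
The strategy is to decompose $\R^{3}$ into the subregions on which $(U_{\de}, w_{\de,\ep})$ can actually differ from $(U, w)$. By the construction in Proposition \ref{approx prop}, this set is contained in the union of three essentially disjoint pieces: the outer region $O_{\de} = \{r > 1/\de\}$, the inner region $I_{\de} = \{r < 2\de\}$, and the axis tube $A_{\de}^{\ep} = \{\rho \leq \sqrt{\ep}\} \cap \{2\de \leq r \leq 1/\de\}$. On the complement, the integrand vanishes identically, so it suffices to show that the integral of $d_{-1}^{6}$ over each piece tends to zero in the iterated limit $\ep \to 0$, then $\de \to 0$.

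On each region I would apply the elementary hyperbolic estimate
\[d_{-1}\bigl((X_{1}, Y_{1}), (X_{2}, Y_{2})\bigr) \leq \Bigl|\log\frac{X_{1}}{X_{2}}\Bigr| + 2\sinh^{-1}\!\Bigl(\frac{|Y_{1} - Y_{2}|}{2\sqrt{X_{1}X_{2}}}\Bigr),\]
obtained by concatenating a vertical and a horocyclic hyperbolic segment. In our variables (with $X = \rho^{2} e^{-2U}$, $Y = 2w$) this becomes
\[d_{-1}\bigl((U_{\de}, w_{\de,\ep}), (U, w)\bigr) \leq 2|U - U_{\de}| + 2\sinh^{-1}\!\Bigl(\frac{|w - w_{\de,\ep}|\, e^{U + U_{\de}}}{\rho^{2}}\Bigr),\]
and the cutoff identities $U_{\de} - U_{0} = \vp^{1}_{\de}(U - U_{0})$ together with the analogous formula for $w_{\de,\ep}$ furnish the pointwise bounds $|U - U_{\de}| \leq |U - U_{0}|$ and $|w - w_{\de,\ep}| \leq |w - w_{0}|$. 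The task thereby reduces to power-counting the decay estimates (\ref{decay of U at infinity})--(\ref{decay of Dw at axis}) against the local geometry of each region.

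On $O_{\de}$, the first summand contributes $\int_{r > 1/\de} |U - U_{0}|^{6}\, dx$, which vanishes as $\de \to 0$ by absolute continuity of the integral since $U - U_{0} \in H^{1} \hookrightarrow L^{6}$. For the second summand, integrating $\partial_{\rho} w$ from the axis $\Ga$ (on which $w = w_{0}$) using (\ref{decay of Dw at infinity}) gives $|w - w_{0}| \lesssim \rho^{3} r^{-\la}$; since $e^{U+U_{\de}}$ is bounded at infinity, the $L^{6}$ integral over $O_{\de}$ is $O(\de^{6\la - 9}) \to 0$ for $\la > 3/2$. On $I_{\de}$, $U_{\de} = U$ and only the $w$-term remains; the near-origin asymptotics (\ref{rate of U at 0}), (\ref{rate of Dw at 0}), and (\ref{rate of U0 Dw0}) yield (after the same radial integration and using the logarithmic branch of $\sinh^{-1}$ in the regime where the argument is large near $r=0$) a bound of the same order. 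On $A_{\de}^{\ep}$, $U_{\de} = U$ and $w_{\de,\ep} = w_{0}$; the axis decay (\ref{decay of Dw at axis}) produces $|w - w_{0}| \leq C(\de)\rho^{3}$, so $(|w - w_{0}|/\rho^{2}) e^{2U} \leq C(\de)\rho$ on the tube, and the sixth power integrates to $O(C(\de)^{6}\ep^{4}/\de) \to 0$ as $\ep \to 0$ with $\de$ fixed. The principal obstacle is the $1/X \sim 1/\rho^{2}$ weight in the hyperbolic metric along $\Ga$: this singular weight must be cancelled by the cubic vanishing of $w - w_{0}$ coming from integrating its $\rho$-derivative from the axis, and balancing these powers carefully in each of the three cutoff regions (plus the overlap annuli where two cutoffs interact) is the main technical point, but the power counts close for every $\la > 3/2$.
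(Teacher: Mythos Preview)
Your proposal is correct and follows essentially the same route as the paper: decompose $\R^{3}$ into the outer shell, the inner ball, and the axis tube, bound the hyperbolic distance by a vertical plus horizontal piece, and then power-count using the integrated bounds $|w-w_{0}|\lesssim \rho^{3}r^{-\la}$ (at infinity), $\rho^{3}r^{\la-6}$ (near the origin), and $C(\de)\rho^{3}$ (near the axis). Two minor remarks: the paper uses the cruder linear estimate $d_{-1}\le 2|U-U_{\de}|+2\tfrac{e^{2U}}{\rho^{2}}|w-w_{\de,\ep}|$ (valid since $2\sinh^{-1}(s/2)\le s$), which avoids any case analysis; in particular your aside about the ``logarithmic branch of $\sinh^{-1}$'' near $r=0$ is unnecessary and in fact misdescribes the situation, since $e^{2U}\sim r^{4}$ there forces the argument $\rho r^{\la-2}\le r^{\la-1}\to 0$ to be small, not large.
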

\begin{proof}
In fact,
\begin{displaymath}
\begin{split}
d_{-1}\big((U_{\de}, w_{\de, \ep}), (U, w)\big) &\leq d_{-1}\big((U_{\de}, w_{\de, \ep}), (U, w_{\de, \ep})\big)+d_{-1}\big((U, w_{\de, \ep}), (U, w)\big)\\
           &=2|U-U_{\de}|+2\frac{e^{2U}}{\rho^{2}}|w-w_{\de, \ep}|.
\end{split}
\end{displaymath}
Now we need to consider,
$$\int_{\R^{3}}(U-U_{\de})^{6}dx\sim\int_{\R^{3}\setminus B_{1/\de}}\underbrace{(U-U_{0})^{6}}_{\sim o(r^{-3})}dx,$$
which converges to $0$ as $\de\rightarrow 0$. Using asymptotic estimates as before,
\begin{displaymath}
\begin{split}
\int_{\R^{3}}\frac{e^{12U}}{\rho^{12}}(w-w_{\de, \ep})^{6}dx & \sim \int_{\R^{3}\setminus B_{1/\de}}\frac{1}{\rho^{12}}\underbrace{e^{12U}}_{\leq 2}\underbrace{(w-w_{0})^{6}}_{\sim \rho^{18}r^{-6\la}}dx +\int_{\mC_{\de, \ep}}\frac{1}{\rho^{12}}\underbrace{e^{12U}}_{\leq C}\underbrace{(w-w_{0})^{6}}_{\sim C\rho^{18}}dx\\
         &+\int_{B_{2\de}}\frac{1}{\rho^{12}}\underbrace{e^{12U}}_{\sim r^{24}}\underbrace{(w-w_{0})^{6}}_{\sim \rho^{18}r^{6\la-36}}dx.
\end{split}
\end{displaymath}
The second term is $\sim\ep^{8}$, and converges to $0$, when $\de$ fixed. The first term is $\sim\de^{6(\la-3/2)}$, which converges to $0$ for $\la>3/2$ when $\de\rightarrow 0$. The third term is 
$\sim\de^{6\la-3}$, and this converges to $0$ as $\de\rightarrow 0$.
\end{proof}


\section{Einstein Maxwell case}\label{e-m case}

Motivated by the work of P. Chru\'sciel and J. Costa \cite{ChrusCo} and G. Weinstein \cite{We}, we will extend the convexity and Sobolev bound to another renormalized harmonic energy functional corresponding to the axisymmetric vacuum Einstein/Maxwell equations.
For this purpose we consider the mapping $\ti{\Psi}=(u, v, \chi, \psi): \R^{3}\rightarrow\CH^{2}$, where $\CH^{2}=\{(u, v, \chi, \psi)\in\R^{4}\}$ is the complex hyperbolic plane with metric
$$ds_{\CH}^{2}=du^{2}+e^{4u}(dv+\chi d\psi-\psi d\chi)^{2}+e^{2u}(d\chi^{2}+d\psi^{2}).$$
The harmonic energy functional $E$ of $\ti{\Psi}:\Om\rightarrow \CH$ is
\begin{equation}\label{harmonic energy2}
E_{\Om}(\ti{\Psi})=\int_{\Om}|du|^{2}+e^{4u}|dv+\chi d\psi-\psi d\chi|^{2}+e^{2u}(|d\chi|^{2}+|d\psi|^{2})dx,
\end{equation}
where $\Om\subset\R^{3}$. Writing
\begin{equation}\label{u and U}
U=u+\log\rho,
\end{equation}
we can rewrite the above mapping as $\Psi=(U, v, \chi, \psi)$. We are interested in the following functional discussed in \cite{ChrusCo}\cite{Co},
\begin{equation}\label{reduced energy for EM}
\I_{\Om}(\Psi)=\int_{\Om}|DU|^{2}+\frac{e^{4U}}{\rho^{4}}|Dv+\chi D\psi-\psi D\chi|^{2}+\frac{e^{2U}}{\rho^{2}}(|D\chi|^{2}+|D\psi|^{2})dx,
\end{equation}
where $\Om\subset\R^{3}$, and we write $\I=\I_{\R^{3}}$. Now denote the one form $\om$ by
\begin{equation}\label{omega}
\om=Dv+\chi D\psi-\psi D\chi
\end{equation}
so that
\begin{equation}\label{reduced energy for EM2}
\I_{\Om}(\Psi)=\int_{\Om}|DU|^{2}+\frac{e^{4U}}{\rho^{4}}|\om|^{2}+\frac{e^{2U}}{\rho^{2}}(|D\chi|^{2}+|D\psi|^{2})dx.
\end{equation}
An result similar to (\ref{E and M}) can be derived by putting (\ref{u and U}) into (\ref{reduced energy for EM2}) and using integration by parts together with the fact that $\log\rho$ is harmonic on $\R^{3}\setminus\Ga$,
\begin{equation}\label{I and E}
\I_{\Om}(\Psi)=E_{\Om}(\ti{\Psi})+\int_{\partial\Om}\frac{\partial\log\rho}{\partial n}(2U+\log\rho)d\si,
\end{equation}
where $\Om$ is a compact region in $\R^{3}\setminus\Ga$, and $n$ is the unit outer normal of $\partial\Om$.

In fact, the extreme Kerr-Newman solution of the Einstein/Maxwell equations is a local critical point of $\I$\footnote{See \cite{We} for details.}. The extreme Kerr-Newman solution is determined by a map $\tilde{\Psi}_{0}=(u_{0}, v_{0}, \chi_{0}, \psi_{0})$, or equivalently $\Psi_{0}=(U_{0}, v_{0}, \chi_{0}, \psi_{0})$ with $U_{0}=u_{0}+\log\rho$, which is given (see \cite{ChrusN}, \cite{We}) as
\begin{equation}\label{Extreme Kerr-Newman}
\begin{split}
&u_{0}=-\frac{1}{2}\log\big[(\tilde{r}^{2}+a^{2}+\frac{a^{2}\sin^{2}\theta(2m\tilde{r}-q^{2})}{\Sigma})\sin^{2}\theta\big]\\
&v_{0}=ma\cos\theta(3-\cos^{2}\theta)-\frac{a(q^{2}\tilde{r}-ma^{2}\sin^{2}\theta)\cos\theta\sin^{2}\theta}{\Sigma}\\
&\chi_{0}=-\frac{qa\tilde{r}\sin^{2}\theta}{\Sigma}\\
&\psi_{0}=q\frac{(\tilde{r}^{2}+a^{2})\cos\theta}{\Sigma},
\end{split}
\end{equation}
where $m^{2}=a^{2}+q^{2}$, and
$$\tilde{r}=r+m,\ \Sigma=\tilde{r}^{2}+a^{2}\cos^{2}\theta.$$
Here $m$ is the ADM mass, $J=ma$ the angular-momentum, and $q$ the electric charge.

We are interested in the class of mappings $\Psi=(U, v, \chi, \psi)$ with finite reduced energy $\I(\Psi)<\infty$, which physically corresponds to axisymmetric initial data sets for the Einstein/Maxwell equations\footnote{See \cite{We} for initial data equation, and see \cite{ChrusCo}\cite{Co} for the relation between $\Psi$ and initial data.}. Here we will consider a class of maps which are variations from extreme Kerr-Newman map. Denote the difference $(\dU, \dv, \dch, \dps)$ by
\begin{equation}\label{dU dv dch dps}
\dU=U-U_{0},\ \dv=v-v_{0},\ \dch=\chi-\chi_{0},\ \dps=\psi-\psi_{0}.
\end{equation}
Motivated by the setting in \cite{D}, we consider the following restrictions on 
$(\dU, \dv, \dch, \dps)$,
\begin{equation}\label{conditions of dPsi in E/M case}
\begin{split}
& \dU\in H^{1}_{0}(\R^{3}),\ (\dU)_{+}\in L^{\infty}(\R^{3}),\\
& (\om-\om_{0})\in L^{2}_{0, \frac{e^{2U_{0}}}{\rho^{2}}}(\R^{3}),\\
& \dch, \dps\in H^{1}_{0, \frac{e^{U_{0}}}{\rho}}(\R^{3}),\ \frac{e^{U_{0}}}{\rho}\dch, \frac{e^{U_{0}}}{\rho}\dps\in L^{\infty}(\R^{3}),
\end{split}
\end{equation}
where $(\dU)_{+}$ denotes the positive part of $\d U$, and $H^{1}_{0, X}(\R^{3})$ is defined in (\ref{norm for y}).

\begin{remark}
This is a relatively restrictive requirement. We put it here in order to show a simple and direct proof compared to that in the next section.
\end{remark}

\begin{lemma}
Under condition (\ref{conditions of dPsi in E/M case}), $\I(\Psi)$ is finite.
\end{lemma}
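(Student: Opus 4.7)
The strategy is to decompose each of the three terms making up $\I(\Psi)$ as a contribution from the background extreme Kerr--Newman map $\Psi_0$ plus a contribution from the perturbation, using the pointwise bound $|a|^2 \le 2|a_0|^2 + 2|a-a_0|^2$. Each background piece is finite because $\Psi_0$ has finite reduced energy (the asymptotic behavior at $0$, at $\infty$, and on the axis $\Ga$ can be read off from (\ref{Extreme Kerr-Newman}) and checked as in Remark \ref{integrability of (U,w)}); each perturbation piece will be finite as a direct consequence of one of the norms appearing in (\ref{conditions of dPsi in E/M case}).

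The first step is to normalize the exponential weights. Since $(\dU)_+\in L^{\infty}(\R^3)$, there exists $M<\infty$ such that $\dU\le M$ almost everywhere, hence
$$\frac{e^{4U}}{\rho^4}=e^{4\dU}\,\frac{e^{4U_0}}{\rho^4}\le e^{4M}\frac{e^{4U_0}}{\rho^4},\qquad \frac{e^{2U}}{\rho^2}\le e^{2M}\frac{e^{2U_0}}{\rho^2}.$$
This replaces every factor $e^{kU}$ appearing in the integrand of $\I(\Psi)$ by $e^{kU_0}$ up to a uniform multiplicative constant, so it suffices to bound the resulting integrals.

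Next, I would treat each of the three terms of $\I$ separately. For the Dirichlet piece, $|DU|^2\le 2|DU_0|^2+2|D\dU|^2$; the first integrates to part of $\I(\Psi_0)<\infty$, and the second is finite because $\dU\in H^1_0(\R^3)$. For the $\om$ piece, combine the weight bound with $|\om|^2\le 2|\om_0|^2+2|\om-\om_0|^2$; the $\om_0$ integral is finite (extreme Kerr--Newman), and $\int\frac{e^{4U_0}}{\rho^4}|\om-\om_0|^2\,dx$ is finite because $(\om-\om_0)\in L^2_{0,\,e^{2U_0}/\rho^2}(\R^3)$, whose weighting $(e^{2U_0}/\rho^2)^2=e^{4U_0}/\rho^4$ is exactly the coefficient in $\I$. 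The $|D\chi|^2+|D\psi|^2$ piece is handled identically: $|D\chi|^2\le 2|D\chi_0|^2+2|D\dch|^2$, the $\chi_0$ part is part of $\I(\Psi_0)$, and the $\dch$ part is finite by $\dch\in H^1_{0,\,e^{U_0}/\rho}(\R^3)$, since its norm is precisely $\int\frac{e^{2U_0}}{\rho^2}|D\dch|^2\,dx$; symmetrically for $\psi$.

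The only delicate point is matching the convention for $L^2_{0,W}$ and $H^1_{0,W}$ with the weights appearing in $\I$, which the remarks above confirm. The $L^{\infty}$ conditions on $\frac{e^{U_0}}{\rho}\dch$ and $\frac{e^{U_0}}{\rho}\dps$ are \emph{not} needed for this finiteness statement; they will be used later in the convexity/gap argument on $\CH^2$ to control cross terms in the geodesic distance computation and to ensure the reduced map stays in the admissible class along the deformation. The main analytic obstacle reduces to verifying the background integrability $\I(\Psi_0)<\infty$, which is a local asymptotic computation at $r\to 0$, $r\to\infty$, and $\rho\to 0$ using the explicit formulas (\ref{Extreme Kerr-Newman}); this parallels the corresponding check for the vacuum case in Remark \ref{integrability of (U,w)} and causes no new difficulty.
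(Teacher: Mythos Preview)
Your proof is correct and follows essentially the same approach as the paper: both arguments use $(\dU)_+\in L^\infty$ to bound $e^{kU}\le Ce^{kU_0}$, which reduces the weighted integrals for the perturbation pieces to the norms in (\ref{conditions of dPsi in E/M case}), while the background pieces are controlled by $\I(\Psi_0)<\infty$. The paper compresses this into a two-line inclusion of weighted Sobolev spaces, and your version is simply the natural unpacking of that sentence; your remark that the $L^\infty$ bounds on $\frac{e^{U_0}}{\rho}\dch$, $\frac{e^{U_0}}{\rho}\dps$ are not needed here is also correct.
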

\begin{proof}
Since $(\dU)_{+}\in L^{\infty}(\R^{3})$, we know that $\frac{e^{U}}{\rho}\leq C\frac{e^{U_{0}}}{\rho}$, so $H^{1}_{0, \frac{e^{U_{0}}}{\rho}}(\R^{3})\subset H^{1}_{0, \frac{e^{U}}{\rho}}(\R^{3})$ and $H^{1}_{0, \frac{e^{2U_{0}}}{\rho^{2}}}(\R^{3})\subset H^{1}_{0, \frac{e^{2U}}{\rho^{2}}}(\R^{3})$. The lemma now follows.
\end{proof}

\begin{lemma}
Under condition (\ref{conditions of dPsi in E/M case}), $\dv\in H^{1}_{0, X}(\R^{3})$, where $X$ is a smooth function defined on $\R^{3}\setminus\Ga$, with $X=\frac{e^{U_{0}}}{\rho}$ in a neighborhood of $\Ga$, and $X=\frac{e^{2U_{0}}}{\rho^{2}}$ elsewhere near $\infty$.
\end{lemma}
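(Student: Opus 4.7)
The plan is to invert the defining relation $\om = Dv + \chi\, D\psi - \psi\, D\chi$ to get $Dv = \om - \chi\, D\psi + \psi\, D\chi$, subtract the corresponding identity for $v_0$, and expand $\chi = \chi_0 + \dch$ and $\psi = \psi_0 + \dps$. This gives the seven-term decomposition
\begin{equation*}
D(\dv) = (\om - \om_0) - \chi_0\, D(\dps) - (\dch)\, D\psi_0 - (\dch)\, D(\dps) + \psi_0\, D(\dch) + (\dps)\, D\chi_0 + (\dps)\, D(\dch).
\end{equation*}
It then suffices to estimate each of these seven terms in the weighted $L^2$ norm associated with $X$ and, separately, to produce an approximation of $\dv$ by functions in $C^\infty_c(\R^3\setminus\Ga)$; the latter is a cutoff-and-mollification argument in the spirit of Section \ref{chrusciel}, so the main content of the lemma is the algebraic estimate.

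For the weighted bound, the key observation is that $X$ is precisely the geometric factor that absorbs, in each asymptotic regime, the algebraic prefactors produced by $\chi_0$, $\psi_0$, $\dch$ and $\dps$. The first term is handled directly by the hypothesis $(\om - \om_0) \in L^2_{0, e^{2U_0}/\rho^2}$. The two cross terms $\chi_0\, D(\dps)$ and $\psi_0\, D(\dch)$ are handled by combining $\dch, \dps \in H^1_{0, e^{U_0}/\rho}$ with pointwise bounds on $\chi_0, \psi_0$ comparable to $\rho/e^{U_0}$, which I would read off from the explicit formulas (\ref{Extreme Kerr-Newman}) using the asymptotics in Appendix A of \cite{ChrusLiWe} in each of the three regimes $r \to 0$, $\rho \to 0$, and $r \to \infty$. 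The mixed terms $(\dch)\, D\psi_0$ and $(\dps)\, D\chi_0$ are dispatched by the hypothesis $\frac{e^{U_0}}{\rho}\dch,\, \frac{e^{U_0}}{\rho}\dps \in L^\infty$ together with the finiteness of the reduced energy of $(\chi_0,\psi_0)$. Finally, the quadratic error terms $(\dch)\, D(\dps)$ and $(\dps)\, D(\dch)$ combine the $L^\infty$ bound on one factor with the $H^1$-type bound on the other.

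The reason the weight $X$ must change shape between a neighborhood of $\Ga$ and a neighborhood of $\infty$ is that $D(\dv)$ has genuinely different scaling in these two regions: near $\Ga$ the dominant contribution comes from the terms carrying $\chi_0$ or $\psi_0$, which vanish on the axis and only force the weaker weight $e^{U_0}/\rho$, whereas near $\infty$ the leading piece is $\om - \om_0$ itself, which carries the full energy-scale weight $e^{2U_0}/\rho^2$. The main obstacle I anticipate is not any individual estimate but the bookkeeping of the reference bounds on $\chi_0$, $\psi_0$, $D\chi_0$, $D\psi_0$ near $r=0$, where $e^{U_0}$, $\Sigma^{-1}$ and $\tilde r$ are each non-trivially bounded and one must track how the competing powers of $\rho$ and $r$ combine; once those reference bounds are in hand, each integrand is controlled by an $L^\infty$ factor times a hypothesis-integrable factor, and the estimate closes by Cauchy--Schwarz.
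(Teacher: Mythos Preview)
Your decomposition of $D(\dv)$ and the term-by-term strategy are exactly what the paper does; the approach is correct and matches the paper's proof. One point to sharpen: you assert that $\chi_0$ and $\psi_0$ are both ``comparable to $\rho/e^{U_0}$'' and later that the terms ``carrying $\chi_0$ or $\psi_0$ \ldots\ vanish on the axis''. This is true for $\chi_0$ (which is $O(\rho^2)$ on $\Ga$) but \emph{false} for $\psi_0$, which equals $\pm q$ on the axis by (\ref{axis condition}). In fact the logic is the reverse of what you wrote: it is precisely because $\psi_0$ is bounded but non-vanishing on $\Ga$ that the single term $\psi_0\,D(\dch)$ cannot be placed in $L^2_{0,\,e^{2U_0}/\rho^2}$ near the axis and one is forced to the weaker weight $e^{U_0}/\rho$ there. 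All six remaining terms do lie in $L^2_{0,\,e^{2U_0}/\rho^2}$ globally; the paper isolates $\psi_0\,D(\dch)$ as the exceptional one, bounds it in $L^2_{0,\,e^{U_0}/\rho}$ using only $\|\psi_0\|_{L^\infty}<\infty$, and then observes that near infinity $e^{2U_0}/\rho^2\leq e^{U_0}/\rho$, so this term also satisfies the $e^{2U_0}/\rho^2$-weighted bound there. With that correction your argument goes through unchanged. (Also, the asymptotics of $\Psi_0$ you need are in Appendix~A of \cite{Co}, not \cite{ChrusLiWe}.)
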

\begin{proof}
We compute
\begin{displaymath}
\begin{split}
\om& =(Dv+\chi D\psi-\psi D\chi)\\
   & =Dv_{0}+D\dv+(\chi_{0}+\dch)D(\psi_{0}+\dps)-(\psi_{0}+\dps)D(\chi_{0}+\dch)\\
   & =\om_{0}+(D\dv+\dch D\dps-\dps D\dch)\\
   & +(\dch D\psi_{0}-\dps D\chi_{0}+\chi_{0}D\dps-\psi_{0}D\dch).
\end{split}
\end{displaymath}
Therefore
\begin{displaymath}
\begin{split}
D\dv & =(\om-\om_{0})-(\dch D\dps-\dps D\dch)-(\dch D\psi_{0}-\dps D\chi_{0})\\
     & -\chi_{0}D\dps+\psi_{0}D\dch.
\end{split}
\end{displaymath}
In fact, from (\ref{conditions of dPsi in E/M case}) and the asymptotic behavior of $\Psi_{0}$ (See Appendix A in \cite{Co}), all terms except for $\psi_{0}D\dch$ lie in $L^{2}_{0, \frac{e^{2U_{0}}}{\rho^{2}}}(\R^{3})$, which are also in $L^{2}_{0, \frac{e^{U_{0}}}{\rho}}(\R^{3})$ near the axis $\Ga$, where $\frac{e^{U_{0}}}{\rho}\leq\frac{e^{2U_{0}}}{\rho^{2}}$. The last term $\psi_{0}D\dch$ lies in $L^{2}_{0, \frac{e^{U_{0}}}{\rho}}(\R^{3})$ since $\psi_{0}$ is bounded, so it also lies in $L^{2}_{0, \frac{e^{2U_{0}}}{\rho^{2}}}(\R^{3})$ as $\frac{e^{2U_{0}}}{\rho^{2}}\leq\frac{e^{U_{0}}}{\rho}$ elsewhere near $\infty$. Thus we have finished the proof.
\end{proof}

\begin{theorem}\label{main theorem3}
$\I(\Psi)$ has a global minimum at the Extreme Kerr-Newman $\Psi_{0}$, when $(\Psi-\Psi_{0})$ satisfies conditions (\ref{conditions of dPsi in E/M case}), i.e.
\begin{equation}
\I(\Psi)\geq\I(\Psi_{0}).
\end{equation}
Furthermore, we have the gap bound,
\begin{equation}\label{gap inequality3}
\I(\Psi)-\I(\Psi_{0})\geq C\{\int_{\R^{3}}d_{\CH}^{6}(\Psi, \Psi_{0})\}^{1/3}.
\end{equation}
\end{theorem}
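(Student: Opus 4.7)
The proof follows the same four-step strategy as Theorems \ref{main theorem} and \ref{main theorem2}, with the real hyperbolic target $\mH^{2}$ replaced by the complex hyperbolic plane $\CH^{2}$. Since $\CH^{2}$ also has non-positive (indeed strictly negative) sectional curvature, the refined second variation formula (\ref{refined second variation for E}) continues to apply to the unreduced harmonic energy $E$ along any pointwise geodesic family in $\CH^{2}$. First I would handle compactly supported perturbations: assume $(\dU,\dv,\dch,\dps)$ is supported in $A_{R,\ep}=B_{R}\setminus B_{\ep}$, with $\dv,\dch,\dps$ additionally vanishing in a neighborhood of the cylinder $\mC_{\ep}=\{\rho\le\ep\}$. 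Let $F_{t}:A_{R,\ep}\to\CH^{2}$ be the pointwise geodesic family connecting $\ti\Psi_{0}$ to $\ti\Psi$, and split the second derivative of $\I_{A_{R,\ep}}(\Psi_{t})$ into the pieces on $\Om_{R,\ep}$ and on $A_{R,\ep}\cap\mC_{\ep}$, in strict analogy with Section \ref{singular case}.

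On $\Om_{R,\ep}$, the identity (\ref{I and E}) converts $\I_{\Om_{R,\ep}}(\Psi_{t})$ into $E_{\Om_{R,\ep}}(\ti\Psi_{t})$ plus a boundary integral. Along the geodesic this boundary integral is linear in $t$, since on the portion of $\partial\Om_{R,\ep}$ where $\al$ is nonzero one has $U_{t}=U_{0}+t\al$ (because $\dv,\dch,\dps$ vanish there), and it therefore contributes nothing to the second derivative; the refined convexity (\ref{refined second variation for E}) then bounds this piece below by $2\int_{\Om_{R,\ep}}|\nabla d_{\CH}(\Psi,\Psi_{0})|^{2}dx$. On $A_{R,\ep}\cap\mC_{\ep}$, where $v=v_{0}$, $\chi=\chi_{0}$, $\psi=\psi_{0}$, the $\CH^{2}$-geodesic collapses to the vertical curve $(u_{0}+t\al,v_{0},\chi_{0},\psi_{0})$; this really is a geodesic because $g_{uu}\equiv 1$ and $g_{ul}\equiv 0$ for $l\ne u$, so all $\Gamma^{k}_{uu}$ vanish. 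Direct differentiation parallel to (\ref{second part of second variation of M}) yields a nonnegative contribution bounded below by $2\int_{A_{R,\ep}\cap\mC_{\ep}}|\nabla\al|^{2}dx=2\int_{A_{R,\ep}\cap\mC_{\ep}}|\nabla d_{\CH}(\Psi,\Psi_{0})|^{2}dx$. Adding these yields the desired convexity $\frac{d^{2}}{dt^{2}}\I(\Psi_{t})\ge 2\int_{A_{R,\ep}}|\nabla d_{\CH}(\Psi,\Psi_{0})|^{2}dx$. The first variation $\frac{d}{dt}\big|_{t=0}\I(\Psi_{t})$ vanishes by the argument of Lemma \ref{vanishing of first variation2}, using that $\ti\Psi_{0}$ is a harmonic map into $\CH^{2}$ and integrating by parts on $\Om_{R,\la}$ while letting the inner cylinder radius $\la\to 0$. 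Integrating the convexity inequality twice in $t$ and invoking the scale invariant Sobolev inequality on $\R^{3}$ delivers the $L^{6}$ gap bound (\ref{gap inequality3}) in the compactly supported case.

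Finally, a general $\Psi$ satisfying (\ref{conditions of dPsi in E/M case}) is approximated by a sequence $\Psi_{n}$ whose deviation from $\Psi_{0}$ is compactly supported in the respective weighted Sobolev norms, arranged so that $\I(\Psi_{n})\to\I(\Psi)$; the $L^{\infty}$ hypotheses $(\dU)_{+},\,\frac{e^{U_{0}}}{\rho}\dch,\,\frac{e^{U_{0}}}{\rho}\dps\in L^{\infty}(\R^{3})$ ensure that $d_{\CH}(\Psi_{n},\Psi)\to 0$ pointwise almost everywhere, so Fatou's lemma applied to the right-hand side of the compactly supported gap bound completes the proof. The main obstacle will be the pointwise control of $d_{\CH}$ along this approximation: unlike the real hyperbolic metric, the $\CH^{2}$ metric contains the cross term $e^{4u}(\chi d\psi-\psi d\chi)^{2}$, so estimating $d_{\CH}(\Psi,\Psi_{n})$ along explicit comparison curves requires precisely the weighted $L^{\infty}$ bounds on $\dch,\dps$ in (\ref{conditions of dPsi in E/M case}). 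A secondary technical point is justifying the interchange of $\frac{d^{2}}{dt^{2}}$ with the integrals in the convexity step, which requires uniform integrability of each piece of the integrand along the entire geodesic family; this is where $(\dU)_{+}\in L^{\infty}$ enters, giving $e^{U_{t}}\le Ce^{U_{0}}$ uniformly in $t\in[0,1]$.
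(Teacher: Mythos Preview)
Your proposal is correct and follows essentially the same approach as the paper: approximate $(\dU,\dv,\dch,\dps)$ by compactly supported functions in $C^{\infty}_{c}(A_{R,\ep})$ and $C^{\infty}_{c}(\Om_{R,\ep})$ under the weighted Sobolev norms $H^{1}_{0}(\R^{3})$, $H^{1}_{0,X}(\R^{3})$, $H^{1}_{0,e^{U_{0}}/\rho}(\R^{3})$, then run the convexity argument of Theorem~\ref{main theorem} with (\ref{I and E}) replacing (\ref{E and M}). The paper's own proof says exactly this in a single paragraph and defers the details to Section~\ref{cc data}; your elaboration of the geodesic collapse to $(u_{0}+t\al,v_{0},\chi_{0},\psi_{0})$ near the axis, the linearity in $t$ of the boundary term, and the role of the $L^{\infty}$ hypotheses in the Fatou step are all faithful expansions of what the paper intends.
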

\begin{proof}
The key point is that we can approximate $\dU$, $\dv$, $(\dch, \dps)$ by compactly supported smooth functions in $C^{\infty}_{c}(A_{R, \ep})$ and $C^{\infty}_{c}(\Om_{R, \ep})$ (see section \ref{singular case} for definition) under $H^{1}_{0}(\R^{3})$, $H^{1}_{0, X}(\R^{3})$, $H^{1}_{0, \frac{e^{U_{0}}}{\rho}}(\R^{3})$ norms respectively. Then the remainder of the proof is exactly the same as in the proof of Theorem \ref{main theorem} except that we use (\ref{I and E}) instead of (\ref{E and M}). We will address the details in next section.
\end{proof}


\section{Extension to Chru\'sciel-Costa data}\label{cc data}

Now we will extend the above result to a more general setting coming from physical asymptotic conditions described in \cite{ChrusCo}, \cite{Co}. In fact, we can handle weaker asymptotic conditions than \cite{ChrusCo}, \cite{Co}; for example, we need only assume $h, E, B=O_{k-1}(\frac{1}{r^{\la}})$ with $\la>\frac{3}{2}$\footnote{Compare to \cite{ChrusCo}\cite{Co}, where they assume $h=O(\frac{1}{r^{\be}})$ with $\be>\frac{5}{2}$, $E, B=O(\frac{1}{r^{1+\ga}})$ with $\ga>\frac{3}{4}$.}, where $h$, $E$ and $B$ are the second fundamental form, electric, and magnetic fields respectively.

In the notation described in the next section, we can state the main theorem which shows that 
$\Psi_{0}$ (extreme Kerr-Neuman) is the global minimum point of the reduced energy.
\begin{theorem}\label{main theorem4}
For $k\geq 6$, $\I(\Psi)$ is bounded from below by the corresponding value of the extreme Kerr-Newman map (\ref{Extreme Kerr-Newman}), i.e. for any map $\Psi=(U, v, \chi, \psi)$ satisfying (\ref{decay of U at infinity})(\ref{rate of U at 0})(\ref{decay of (om, chi, psi)})(\ref{rate of (om, chi, psi)}) (\ref{decay at axis}) and (\ref{axis condition}) we have
\begin{equation}
\I(\Psi)\geq\I(\Psi_{0}).
\end{equation}
Furthermore, we have the gap inequality,
\begin{equation}\label{gap inequality4}
\I(\Psi)-\I(\Psi_{0})\geq C\{\int_{\R^{3}}d_{\CH}^{6}(\Psi, \Psi_{0})dx\}^{1/3}.
\end{equation}
\end{theorem}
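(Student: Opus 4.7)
The plan is to combine the cut-and-paste approximation scheme from the proof of Theorem \ref{main theorem2} with the convexity framework for the complex hyperbolic target $\CH^{2}$ used in Theorem \ref{main theorem3}. Since $\CH^{2}$ has non-positive sectional curvature, the general second variation formula (\ref{refined second variation for E}) still applies to the Dirichlet energy $E$ of $\ti{\Psi}$, so the heart of the proof is, as before, reducing $\I$ to $E$ up to controllable boundary terms via (\ref{I and E}) and then performing a geodesic deformation argument.

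First I would carry out an analogue of Proposition \ref{approx prop}: produce $\Psi_{\de,\ep}$ agreeing with $\Psi$ in the bulk $\{2\de<r<1/\de,\ \rho>\sqrt{\ep}\}$, with $\Psi_{0}$ for $r>2/\de$, and with $(U,v_{0},\chi_{0},\psi_{0})$ on $B_{\de}\cup\mC_{\de,\ep}$ (so that only $U-U_{0}$ can be nonzero near the axis and the origin). This requires three cutoff steps mirroring Lemmas \ref{approx lemma1}--\ref{approx lemma3}, but now applied independently to the triple $(v-v_{0},\chi-\chi_{0},\psi-\psi_{0})$. The difficulty here is that $\om=Dv+\chi D\psi-\psi D\chi$ is nonlinear in the potentials, so cutting $v$, $\chi$, $\psi$ separately produces cross terms. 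I would estimate the excess in $\I$ by expanding $\om_{\de,\ep}-\om_{0}$, using the asymptotic rates for $(U,v,\chi,\psi)$ and $(U_{0},v_{0},\chi_{0},\psi_{0})$ encoded in conditions (\ref{decay of U at infinity})--(\ref{axis condition}) (to be established in the asymptotic appendix from $h,E,B=O(r^{-\la})$, $\la>3/2$), and by exploiting that the potentials $v,\chi,\psi$ vanish on $\Gamma$ up to the prescribed constants, so that $(v-v_{0})$, $(\chi-\chi_{0})$, $(\psi-\psi_{0})$ all behave like $\rho^{2}$ transverse to the axis. Just as in Section \ref{chrusciel}, the asymptotic rate $\la>3/2$ is exactly what makes the cutoff annuli contribute vanishing error.

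Next I would establish the convexity step, generalizing Lemma \ref{convexity2}. Let $\Psi_{t}$ be the $\CH^{2}$-geodesic from $\Psi_{0}$ to $\Psi_{\de,\ep}$; on the auxiliary region $\mA_{\de,\ep}=B_{2/\de}\setminus\Om_{\de,\ep}$ the perturbation is purely in the $U$ component, so by basic $\CH^{2}$ geometry the geodesic satisfies $U_{t}=U_{0}+t\al$, $v_{t}=v_{0}$, $\chi_{t}=\chi_{0}$, $\psi_{t}=\psi_{0}$ there, and $d_{\CH}(\Psi,\Psi_{0})=2|\al|$. On $\Om_{\de,\ep}$ I would use (\ref{I and E}) to convert $\frac{d^{2}}{dt^{2}}\I_{\Om_{\de,\ep}}$ into $\frac14\frac{d^{2}}{dt^{2}}E_{\Om_{\de,\ep}}(\ti\Psi_{t})$ plus a boundary term linear in $t$ (hence killed by the second derivative), then apply the refined second variation formula to the Dirichlet energy on the regular domain. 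On $\mA_{\de,\ep}$ I would differentiate the explicit reduced energy integrand twice in $t$, discarding the nonnegative curvature-type terms, to extract $2|D\al|^{2}$; uniform integrability of the integrands must be verified using the asymptotic decay and blow-up rates of $U_{0},\om_{0},\chi_{0},\psi_{0}$ near $0$ and $\Ga$, exactly as in the proof of Lemma \ref{convexity2}.

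Once convexity is in hand, I would verify that the first variation $\frac{d}{dt}\big|_{t=0}\I(\Psi_{t})=0$, following Lemma \ref{vanishing of first variation2}: integrate by parts on $\Om_{\la,\mu}$ using that $\Psi_{0}$ satisfies the Euler--Lagrange system for $\I$, check that the resulting boundary contributions on $\partial B_{\la}$ and $\partial\mC_{\la,\mu}$ vanish as $\mu\to0$ and $\la\to0$ using the asymptotic rates of $U_{0},\om_{0},\chi_{0},\psi_{0}$ paired with the growth of $\alpha,\Delta v,\Delta\chi,\Delta\psi$. Integrating the convexity inequality twice in $t$, applying the scale-invariant Sobolev inequality on $B_{2/\de}$, and letting $\ep\to0$ then $\de\to0$ gives
\[
\I(\Psi)-\I(\Psi_{0})\geq C\Bigl(\int_{\R^{3}}d_{\CH}^{6}(\Psi,\Psi_{0})\,dx\Bigr)^{1/3},
\]
where, as in Lemma \ref{L6 convergence of approx U and w}, one needs a final lemma showing $\int_{\R^{3}}d_{\CH}^{6}(\Psi_{\de,\ep},\Psi)\,dx\to0$ via the $\CH^{2}$ triangle inequality and explicit bounds for $d_{\CH}$ in terms of $|U-U_{\de}|$, $e^{2U_{0}}\rho^{-2}|v-v_{\de,\ep}|$, $e^{U_{0}}\rho^{-1}|\chi-\chi_{\de,\ep}|$, $e^{U_{0}}\rho^{-1}|\psi-\psi_{\de,\ep}|$. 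The main obstacle is the cut-and-paste step for the triple $(v,\chi,\psi)$: one must arrange the three cutoffs so that the boundary contributions from $\om$, which couples all three potentials, are controlled by the weakened decay $\la>3/2$, and this is where the careful asymptotic book-keeping of Section \ref{asymptotic} is essential.
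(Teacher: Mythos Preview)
Your proposal is correct and follows essentially the same route as the paper: cut-and-paste approximation (the paper's Proposition \ref{approx prop2}), convexity of $\I$ along $\CH^{2}$-geodesics via the splitting $\Om_{\de,\ep}\cup\mA_{\de,\ep}$ and formula (\ref{I and E}) (the paper's Lemma \ref{convexity3}), vanishing first variation at $\Psi_{0}$ (the paper's Lemma \ref{vanishing of first variation3}), and then the Sobolev/limit argument exactly as in Section \ref{convexity and gap inequality}. Two small numerical slips worth fixing: on $\mA_{\de,\ep}$ the $\CH^{2}$-metric gives $d_{\CH}(\Psi,\Psi_{0})=|\dU|$ (not $2|\al|$), and formula (\ref{I and E}) reads $\I=E+\text{boundary}$ with no factor $\tfrac14$ (that factor was specific to the vacuum normalization $E(X,2w)=4\I(U,w)$); neither affects the structure of the argument.
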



\subsection{Asymptotic behavior}\label{asymptotic}

We first describe the singular behavior of $\Psi$. From \cite{Chrus1}, we can assume $U$ satisfies (\ref{decay of U at infinity}) and (\ref{rate of U at 0}). From the asymptotic flatness conditions (see \cite{ChrusCo}, \cite{Co}) for corresponding initial data sets, we can assume the decay rate of $(\om, \chi, \psi)$  at $\infty$ is
\begin{equation}\label{decay of (om, chi, psi)}
|\om|=\rho^{2}O(r^{-\la});\ |D\chi|, |D\psi|=\rho O(r^{-\la}),\ r\rightarrow\infty,
\end{equation}
where we assume the decay rate of electric and magnetic fields is $O(r^{-\la})$\footnote{Compare with (2.3) in \cite{Co}.}. Now using an inversion near $0$,
\begin{equation}\label{rate of (om, chi, psi)}
|\om|=\rho^{2}O(r^{\la-6}); \ |D\chi|, |D\psi|=\rho O(r^{\la-4}),\ r\rightarrow 0.
\end{equation}
Near the axis $\Ga=\{\rho=0\}$, we can assume that,
\begin{equation}\label{decay at axis}
|\om|=O(\rho^{2});\ |D\chi|, |D\psi|=O(\rho),\ \rho\rightarrow 0,\ \de\leq r\leq 1/\de.
\end{equation}
Furthermore, we assume that the data corresponding to $\Psi$ has the same angular momentum and electric-magnetic charge as the extreme Kerr-Neuman data given by $\Psi_{0}$, i.e. they have the same value restricted to the axis $\Ga=\mA_{1}\cup\mA_{2}$\footnote{See discussion on page 4 in \cite{Co}. $\mA_{1}$ and $\mA_{2}$ are defined in section \ref{review of chrusciel reduction}.},
\begin{equation}\label{axis condition}
v|_{\Ga}=v_{0}|_{\Ga}=\left\{ \begin{array}{ll}
-2ma, & \textrm{on $\mA_{1}$}\\
2ma, & \textrm{on $\mA_{2}$}
\end{array} \right.,\
\chi|_{\Ga}=\chi_{0}|_{\Ga}=0,\
\psi|_{\Ga}=\psi_{0}|_{\Ga}=\left\{ \begin{array}{ll}
-q, & \textrm{on $\mA_{1}$}\\
q, & \textrm{on $\mA_{2}$}
\end{array} \right..
\end{equation}

Now let us derive more asymptotic conditions on the data. Using the boundary behavior (\ref{axis condition}) and integrating (\ref{decay of (om, chi, psi)}) along a line perpendicular to $\Ga$,
\begin{equation}\label{decay of chi, psi}
|\chi|=\rho^{2}O(r^{-\la}),\ |\psi|= const+\rho^{2}O(r^{-\la})=O(r^{-\la+2}),\ r\rightarrow\infty.
\end{equation}
Similarly integrating (\ref{rate of (om, chi, psi)}),
\begin{equation}\label{rate of chi, psi}
|\chi|=\rho^{2}O(r^{\la-4}),\ |\psi|=const+\rho^{2}O(r^{\la-4})=O(r^{\la-2}),\ r\rightarrow 0.
\end{equation}
Near the axis we can integrate (\ref{axis condition})
\begin{equation}\label{axis decay of chi, psi}
|\chi|=O(\rho^{2}),\ |\psi|=O(1),\ \rho\rightarrow 0,\ \de\leq r\leq 1/\de.
\end{equation}
Now combining with (\ref{omega})(\ref{decay of (om, chi, psi)})(\ref{rate of (om, chi, psi)}) and (\ref{decay of chi, psi})(\ref{rate of chi, psi})(\ref{axis decay of chi, psi}), we have
\begin{equation}\label{decay of Dv}
|Dv|\leq|\om|+|\chi D\psi-\psi D\chi|=\rho^{2}O(r^{-\la})+\rho O(r^{-2\la+2})=\rho O(r^{-\la+1}),\ r\rightarrow\infty.
\end{equation}
\begin{equation}\label{rate of Dv}
|Dv|\leq|\om|+|\chi D\psi-\psi D\chi|=\rho^{2}O(r^{\la-6})+\rho O(r^{2\la-6})=\rho O(r^{\la-5}),\ r\rightarrow 0.
\end{equation}
\begin{equation}\label{axis decay of Dv}
|Dv|\leq|\om|+|\chi D\psi-\psi D\chi|=O(\rho^{2})+O(\rho)=O(\rho),\ \rho\rightarrow 0,\ \de\leq r\leq 1/\de.
\end{equation}

\begin{remark}
Let us quickly review the integrability of $\I(\Psi)$. The $|DU|^{2}$ term is the same as in the vacuum case, and the term $\frac{e^{4U}}{\rho^{4}}|\om|^{2}$ is the same as $\frac{e^{4U}}{\rho^{4}}|dw|^{2}$ in Remark \ref{integrability of (U,w)}. Now for $(\chi, \psi)$, near $\infty$, $\frac{e^{2U}}{\rho^{2}}(|D\chi|^{2}+|D\psi|^{2})=O(r^{-2\la})$ is integrable for $\la>\frac{3}{2}$. Near $0$, $\frac{e^{2U}}{\rho^{2}}(|D\chi|^{2}+|D\psi|^{2})=O(r^{2\la-4})$ is also integrable.

Now let us also list the asymptotic behavior of $\Psi_{0}$
\begin{equation}\label{rate of (om0, chi0, psi0)}
|\om_{0}|=\rho^{2}O(r^{-3}),\ |D\chi_{0}|=\rho O(r^{-3}), |D\psi_{0}|=\rho O(r^{-2}),\ \chi_{0}=\rho^{2}O(r^{-3}),\ \psi_{0}=O(1),\ r\rightarrow \infty.
\end{equation}
\begin{equation}\label{decay of (om0, chi0, psi0)}
|\om_{0}|=\rho^{2}O(r^{-3}),\ |D\chi_{0}|, |D\psi_{0}|=\rho O(r^{-2}),\ \chi_{0}=\rho^{2}O(r^{-2}),\ \psi_{0}=O(1),\ r\rightarrow 0.
\end{equation}
Here the behavior of $\om$ is gotten by direct calculations based on (\ref{Extreme Kerr-Newman}), and other calculations can be found in Appendix A in \cite{Co}.
\end{remark}


\subsection{Cut and paste argument}

Given $\Psi=(U, v, \chi, \psi)$ as in Theorem \ref{main theorem4}, we approximate $\I(\Psi)$ again by cutting and pasting $\Psi$ to $\Psi_{0}$ near $\infty$, and then cutting and pasting $(v, \chi, \psi)$ to $(v_{0}, \chi_{0}, \psi_{0})$ near $0$ and axis $\Ga$.
\begin{proposition}\label{approx prop2}
Under conditions (\ref{decay of U at infinity})(\ref{rate of U at 0})(\ref{decay of (om, chi, psi)})(\ref{rate of (om, chi, psi)})(\ref{decay at axis}) and (\ref{axis condition}) for $\Psi=(U, v, \chi, \psi)$, for any small $c_{0}>0$, we can find $\Psi_{\de, \ep}=(U_{\de}, v_{\de, \ep}, \chi_{\de, \ep}, \psi_{\de, \ep})$ for small $\ep\ll\de\ll 1$, such that:
$$U_{\de}\equiv U,\ r<1/\de;\ (v_{\de, \ep}, \chi_{\de, \ep}, \psi_{\de, \ep})\equiv (v, \chi,\ \psi),\ \rho>\sqrt{\ep},\ 2\de<r<1/\de,$$
$$(U_{\de}, v_{\de, \ep}, \chi_{\de, \ep}, \psi_{\de, \ep})=(U_{0}, v_{0}, \chi_{0}, \psi_{0}), \ r>2/\de,$$
$$(v_{\de, \ep}, \chi_{\de, \ep}, \psi_{\de, \ep})\equiv (v_{0}, \chi_{0}, \psi_{0}),\ x\in B_{\de}\cup\mC_{\de, \ep},$$
where $\mC_{\de, \ep}$ is defined in (\ref{mC}), and
$$|\I(\Psi)-\I(\Psi_{\de, \ep})|<c_{0}.$$
\end{proposition}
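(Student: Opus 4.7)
The proof mirrors Proposition \ref{approx prop} with three successive cut-and-paste stages, now adapted to the four-component map $\Psi=(U,v,\chi,\psi)$ and the reduced energy (\ref{reduced energy for EM2}). First, using the outer cutoff $\vp^{1}_{\de}$ from (\ref{cutoff function1}), I would define
$$U^{1}_{\de}=U_{0}+\vp^{1}_{\de}(U-U_{0}),\quad v^{1}_{\de}=v_{0}+\vp^{1}_{\de}(v-v_{0}),$$
$$\chi^{1}_{\de}=\chi_{0}+\vp^{1}_{\de}(\chi-\chi_{0}),\quad \psi^{1}_{\de}=\psi_{0}+\vp^{1}_{\de}(\psi-\psi_{0}),$$
and show $\I(\Psi^{1}_{\de})\to\I(\Psi)$ as $\de\to 0$, as in Lemma \ref{approx lemma1}. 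Then, assuming $\Psi\equiv\Psi_{0}$ outside a large ball, I would cut $(v,\chi,\psi)$ to $(v_{0},\chi_{0},\psi_{0})$ near the origin with $\vp_{\de}$ from (\ref{cutoff function2}), keeping $U$ unchanged, and show the resulting energy still converges to $\I(\Psi)$ (analogue of Lemma \ref{approx lemma2}). Finally, having also arranged $(v,\chi,\psi)\equiv(v_{0},\chi_{0},\psi_{0})$ in $B_{\de}$, I would use the logarithmic axial cutoff $\phi_{\ep}$ from (\ref{cutoff function3}) on $(v,\chi,\psi)$ to produce $\Psi_{\de,\ep}$ (analogue of Lemma \ref{approx lemma3}).

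For each of the three stages, I would split the energy integral into the unchanged region (contribution $\to\I(\Psi)$ by the dominated convergence theorem), the frozen region where the cut map equals $\Psi_{0}$ (contribution $\to 0$ by DCT since $\I(\Psi_{0})<\infty$ and the region either recedes to infinity or shrinks), and the transition region where the cutoff derivative is active. Writing $\Delta v=\vp(v-v_{0})$, $\Delta\chi=\vp(\chi-\chi_{0})$, $\Delta\psi=\vp(\psi-\psi_{0})$ for the relevant cutoff $\vp$, one obtains $\om_{\text{cut}}-\om_{0}=D\Delta v+\chi_{0}D\Delta\psi-\psi_{0}D\Delta\chi+\Delta\chi D\psi_{0}-\Delta\psi D\chi_{0}+\Delta\chi D\Delta\psi-\Delta\psi D\Delta\chi$. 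Each term produces, after expansion of $|\om_{\text{cut}}|^{2}$, a summand bounded by the ambient energy density of $\Psi$ or $\Psi_{0}$ (vanishing by DCT on the transition region) plus a summand in which the factor $|D\vp|^{2}$ acts on a squared difference $(v-v_{0})^{2}$, $(\chi-\chi_{0})^{2}$ or $(\psi-\psi_{0})^{2}$; only the latter require serious estimation.

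The main obstacle is controlling these weighted transition integrals via the asymptotic data, since $\om$ is nonlinear in $(\chi,\psi)$ and the $(\chi,\psi)$-quadratic term in $\I$ is new. The necessary bounds for the squared differences come from integrating (\ref{decay of (om, chi, psi)})--(\ref{decay of Dv}) and their analogues near $0$ along lines perpendicular to the axis, together with the matching boundary values in (\ref{axis condition}); these yield $(v-v_{0})^{2}=\rho^{2}O(r^{-2\la+2})$, $(\chi-\chi_{0})^{2}=\rho^{4}O(r^{-2\la})$, $(\psi-\psi_{0})^{2}=\rho^{2}O(r^{-2\la+2})$ near infinity (with parallel expressions near the origin by inversion), and $|v-v_{0}|,|\chi-\chi_{0}|,|\psi-\psi_{0}|=O(\rho^{2})$ near the axis. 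Combined with the weight sizes $\frac{e^{4U}}{\rho^{4}}=O(r^{-4})$, $\frac{e^{2U}}{\rho^{2}}=O(r^{-2})$ near $\infty$, $O(r^{4}), O(r^{2})$ near $0$, and both $O(1)$ on the axis (where $e^{2U}\sim\rho^{2}$), and the cutoff sizes $|D\vp^{1}_{\de}|^{2}\leq 4\de^{2}$, $|D\vp_{\de}|^{2}\leq 4/\de^{2}$, $|D\phi_{\ep}|^{2}\sim 1/(\rho\ln\ep)^{2}$, a direct computation shows the transition integrals are of order $\de^{2\la-3}$ at infinity, $\de^{2\la-1}$ near the origin, and $1/|\ln\ep|$ at the axis. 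The first two vanish precisely because $\la>3/2$, the same threshold as in the vacuum case, while the last vanishes unconditionally, so choosing $\ep\ll\de$ sufficiently small gives $|\I(\Psi_{\de,\ep})-\I(\Psi)|<c_{0}$.
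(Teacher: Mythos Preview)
Your three-stage cut-and-paste strategy, the choice of cutoffs, and the split of each stage into unchanged/frozen/transition regions are exactly the paper's proof. Two points of $\rho$-bookkeeping need correction, though, and they matter because the integrability near the axis $\Ga$ is the whole issue.

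First, the weights: since $U$ is bounded across $\Ga$ (it is $e^{-2u}=\rho^2 e^{-2U}$ that vanishes there), one has $\frac{e^{4U}}{\rho^{4}}\sim\rho^{-4}$ and $\frac{e^{2U}}{\rho^{2}}\sim\rho^{-2}$ near the axis, near infinity, and near $0$---not $O(r^{-4})$, $O(r^{4})$, or $O(1)$ as you write. Correspondingly, to make the transition integrals finite you need the sharp bounds $(v-v_{0})=\rho^{2}O(r^{-\la+1})$ and $(\psi-\psi_{0})=\rho^{2}O(r^{-\la})$ (squared: $\rho^{4}O(\cdot)$, not $\rho^{2}O(\cdot)$), which do follow by integrating the $\rho$-weighted derivative bounds (\ref{decay of (om, chi, psi)}), (\ref{decay of Dv}) from $\Ga$.

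Second, your claim that ``each term'' of $\om_{\text{cut}}-\om_{0}$ without a $D\vp$ factor is bounded by the ambient energy density is not true term-by-term: for example $\frac{e^{4U}}{\rho^{4}}|\vp D(v-v_{0})|^{2}\sim\rho^{-2}O(r^{-2\la+2})$ is \emph{not} integrable near $\Ga$, because $\psi_{0}$ does not vanish on the axis. The paper handles this by regrouping your expansion into the identity (from (2.16) of \cite{Co})
\[
\om_{\text{cut}}=\vp\,\om+(1-\vp)\om_{0}+(D\vp)(v-v_{0})+(D\vp)(\chi_{0}\psi-\psi_{0}\chi)+\vp(1-\vp)\big[(\psi-\psi_{0})D(\chi-\chi_{0})-(\chi-\chi_{0})D(\psi-\psi_{0})\big],
\]
so that the non-$D\vp$ part is the convex combination $\vp\om+(1-\vp)\om_{0}$, directly dominated by $|\om|+|\om_{0}|$. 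With these two fixes your asserted orders $\de^{2\la-3}$, $\de^{2\la-1}$, and $1/|\ln\ep|$ are exactly what the computation yields.
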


As in the vacuum case, we can achieve this approximation is three steps. Now we will sketch the proof. First define
$$\Psi^{1}_{\de}=\Psi_{0}+\vp^{1}_{\de}(\Psi-\Psi_{0}),$$
where $\vp^{1}_{\de}$ is defined in (\ref{cutoff function1}). Then $\Psi^{1}_{\de}=\Psi_{0}$ outside $B_{2/\de}$.

\begin{lemma}
$\lim_{\de\rightarrow 0}\I(\Psi^{1}_{\de})=\I(\Psi)$.
\end{lemma}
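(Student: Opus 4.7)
The plan is to use the same three--region decomposition as in Lemma \ref{approx lemma1}, writing
$$\I(\Psi^{1}_{\de})=\int_{r\leq 1/\de}+\int_{1/\de<r<2/\de}+\int_{r\geq 2/\de}\Big[|DU^{1}_{\de}|^{2}+\frac{e^{4U^{1}_{\de}}}{\rho^{4}}|\om(\Psi^{1}_{\de})|^{2}+\frac{e^{2U^{1}_{\de}}}{\rho^{2}}(|D\chi^{1}_{\de}|^{2}+|D\psi^{1}_{\de}|^{2})\Big]dx.$$
On $\{r\leq 1/\de\}$ the cutoff $\vp^{1}_{\de}\equiv 1$, so $\Psi^{1}_{\de}\equiv\Psi$ and the first integral tends to $\I(\Psi)$ by the dominated convergence theorem. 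On $\{r\geq 2/\de\}$ we have $\Psi^{1}_{\de}\equiv\Psi_{0}$, and the third integral is the tail at infinity of the finite integral $\I(\Psi_{0})$, so it vanishes. The only real work is on the annular middle piece.

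On the annulus $\{1/\de<r<2/\de\}$ the volume is of order $\de^{-3}$ and $r\sim\de^{-1}$, and I would treat the three terms of the integrand in turn. For $|DU^{1}_{\de}|^{2}$ the argument is essentially identical to Lemma \ref{approx lemma1}: the bound $|DU^{1}_{\de}|^{2}\leq C|DU|^{2}+C|DU_{0}|^{2}+C(\dU)^{2}|D\vp^{1}_{\de}|^{2}$ produces two DCT pieces plus a cutoff piece bounded by $C\de^{2}\cdot o(\de)\cdot\de^{-3}=o(1)$ using $(\dU)^{2}=o(r^{-1})$ and $|D\vp^{1}_{\de}|^{2}\leq 4\de^{2}$. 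For the Maxwell--potential term $\frac{e^{2U^{1}_{\de}}}{\rho^{2}}(|D\chi^{1}_{\de}|^{2}+|D\psi^{1}_{\de}|^{2})$, the factor $e^{2U^{1}_{\de}}$ is bounded at infinity since both $U$ and $U_{0}$ are $o(1)$; the same triangle--inequality split reduces matters to the decay (\ref{decay of (om, chi, psi)}) and the integrated bounds (\ref{decay of chi, psi}) (which use the matching axis values (\ref{axis condition}) to kill the constants of integration for $\dch,\dps$), yielding contributions of order $\de^{2\la-3}\to 0$ for $\la>3/2$.

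The principal obstacle is the $\frac{e^{4U^{1}_{\de}}}{\rho^{4}}|\om(\Psi^{1}_{\de})|^{2}$ term, because $\om$ is nonlinear in $(v,\chi,\psi)$ and so $\om(\Psi^{1}_{\de})\neq\om_{0}+\vp^{1}_{\de}(\om-\om_{0})$. The plan is to expand
$$\om(\Psi^{1}_{\de})-\om_{0}=\vp^{1}_{\de}\bigl(D(\dv)+\chi_{0}D(\dps)-\psi_{0}D(\dch)+\dch D\psi_{0}-\dps D\chi_{0}\bigr)+(D\vp^{1}_{\de})\bigl(\dv+\chi_{0}\dps-\psi_{0}\dch\bigr)+\mathcal{Q},$$
where $\mathcal{Q}$ collects the genuinely quadratic corrections of the form $(\vp^{1}_{\de})^{2}\dch D(\dps)$, $(\vp^{1}_{\de})(D\vp^{1}_{\de})\dch\dps$, and their mirror images. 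Each linear piece, multiplied by $\rho^{-4}$ and integrated over the annulus, produces a power of $\de$ of order $\de^{2(\la-3/2)}\to 0$ after invoking (\ref{decay of (om, chi, psi)})--(\ref{decay of Dv}) for the difference quantities and (\ref{rate of (om0, chi0, psi0)}) for the Kerr--Newman background; this is exactly where the sharp hypothesis $\la>3/2$ is used. The $\mathcal{Q}$ pieces are strictly higher order in $\de$ and so vanish faster. Combining the three term--by--term estimates gives $I_{2}\to 0$ and hence the lemma.
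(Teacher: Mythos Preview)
Your strategy is exactly the paper's: the same three--region split, the same DCT handling of the inner and outer pieces, and the same focus on the $\om$--term in the transition annulus; the paper simply says the $|DU|^{2}$ and $e^{2U}\rho^{-2}(|D\chi|^{2}+|D\psi|^{2})$ terms go through as in Lemma~\ref{approx lemma1} and concentrates on $\int_{1/\de<r<2/\de}\frac{e^{4U^{1}_{\de}}}{\rho^{4}}|\om^{1}_{\de}|^{2}\to 0$.

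The one point that deserves care is your organization of the $\om$--expansion. The paper uses the algebraically equivalent identity
\[
\om^{1}_{\de}=\vp^{1}_{\de}\,\om+(1-\vp^{1}_{\de})\,\om_{0}+D\vp^{1}_{\de}\,(v-v_{0})+D\vp^{1}_{\de}\,(\chi_{0}\psi-\psi_{0}\chi)+\vp^{1}_{\de}(1-\vp^{1}_{\de})\bigl(\dps\,D\dch-\dch\,D\dps\bigr),
\]
so that the leading pieces are $\om$ and $\om_{0}$ themselves, for which the $\rho^{2}$ axis decay (\ref{decay of (om, chi, psi)}), (\ref{rate of (om0, chi0, psi0)}) is directly available, and the remaining correction terms are estimated one at a time. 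In your version the first bracket $\vp^{1}_{\de}\bigl(D\dv+\chi_{0}D\dps-\psi_{0}D\dch+\dch D\psi_{0}-\dps D\chi_{0}\bigr)$ is precisely $\vp^{1}_{\de}\bigl[(\om-\om_{0})-(\dch\,D\dps-\dps\,D\dch)\bigr]$, which indeed has the needed $\rho^{2}$ behavior \emph{as a sum}; but several of its individual summands --- notably $D\dv$ and $\psi_{0}D\dch$ --- carry only a single power of $\rho$ (cf.\ (\ref{decay of Dv}) and $\psi_{0}=O(1)$), so $\rho^{-4}|\cdot|^{2}\sim\rho^{-2}$ is not integrable near the axis and they cannot be bounded separately. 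Your phrase ``each linear piece'' together with the citation of (\ref{decay of Dv}) reads as a term--by--term estimate; if that is what you intend, it fails. If instead you treat the first bracket as a whole via the identity above (or equivalently rewrite in the paper's convex--combination form), the estimate goes through and yields exactly the $\de^{2\la-3}$ you claim.
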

\begin{proof}
By comparing to the proof of lemma \ref{approx lemma1}, the only difference from that case is to show
$$\int_{1/\de<r<2/\de}\frac{e^{4U^{1}_{\de}}}{\rho^{4}}|\om^{1}_{\de}|^{2}dx\rightarrow 0,$$
where (by (2.16) in \cite{Co})
\begin{displaymath}
\begin{split}
\om^{1}_{\de} &=\vp^{1}_{\de}\om+(1-\vp^{1}_{\de})\om_{0}+\underbrace{D\vp^{1}_{\de}(v-v_{0})}_{\sim \de\rho^{2} r^{-\la+1}} +\underbrace{D\vp^{1}_{\de}(\chi_{0}\psi-\psi_{0}\chi)}_{\sim \de\rho^{2}r^{-\la}}\\
              &+\vp^{1}_{\de}(1-\vp^{1}_{\de})\underbrace{\{(\psi-\psi_{0})D(\chi-\chi_{0})-(\chi-\chi_{0})D(\psi-\psi_{0})\}}_{\sim \rho^{2}r^{-2\la+1}}.
\end{split}
\end{displaymath}
The asymptotic behavior comes from (\ref{decay of Dv})(\ref{decay of chi, psi})(\ref{axis condition})(\ref{decay of (om, chi, psi)}) and those of Extreme-Kerr coming from Appendix A in \cite{Co}. Convergence follows from the asymptotics.
\end{proof}

Now we can assume $\Psi=\Psi_{0}$ outside $B_{2/\de}$. Define
$$(v_{\de}, \chi_{\de}, \psi_{\de})=(v_{0}, \chi_{0}, \psi_{0})+\vp_{\de}(v-v_{0}, \chi-\chi_{0}, \psi-\psi_{0}),$$
where $\vp_{\de}$ is defined in (\ref{cutoff function2}). Then $(v_{\de}, \chi_{\de}, \psi_{\de})=(v_{0}, \chi_{0}, \psi_{0})$ in $B_{\de}$. Let $\Psi_{\de}=(U, v_{\de}, \chi_{\de}, \psi_{\de})$.

\begin{lemma} We have
$\lim_{\de\rightarrow 0}\I(\Psi_{\de})=\I(\Psi)$.
\end{lemma}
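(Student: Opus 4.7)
The plan is to mirror Lemma \ref{approx lemma2}, decomposing $\R^{3}$ into $B_{\de}$, the transition annulus $\{\de<r<2\de\}$, and $\{r\ge 2\de\}$, and handling each region separately. Write
\begin{equation*}
\I(\Psi_{\de}) = \underbrace{\int_{r\le\de}}_{I_1} + \underbrace{\int_{\de<r<2\de}}_{I_2} + \underbrace{\int_{r\ge2\de}}_{I_3}\Big[|DU|^{2} + \frac{e^{4U}}{\rho^{4}}|\om_{\de}|^{2} + \frac{e^{2U}}{\rho^{2}}\bigl(|D\chi_{\de}|^{2} + |D\psi_{\de}|^{2}\bigr)\Big]\,dx.
\end{equation*}
On $\{r\ge 2\de\}$ the triple $(v_{\de},\chi_{\de},\psi_{\de})$ agrees with $(v,\chi,\psi)$, so $I_3\to\I(\Psi)$ by dominated convergence. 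On $B_{\de}$, $(v_{\de},\chi_{\de},\psi_{\de})\equiv(v_{0},\chi_{0},\psi_{0})$, and using $e^{2U}\sim r^{4}$ from (\ref{rate of U at 0}) together with the extreme Kerr-Newman rates (\ref{decay of (om0, chi0, psi0)}) one finds $\frac{e^{4U}}{\rho^{4}}|\om_{0}|^{2}\sim r^{2}$ and $\frac{e^{2U}}{\rho^{2}}(|D\chi_{0}|^{2}+|D\psi_{0}|^{2})\sim 1$ near the origin, whose integrals over $B_{\de}$ are $O(\de^{5})$ and $O(\de^{3})$; the $|DU|^{2}$ piece vanishes by DCT. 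Hence $I_1\to 0$.

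The main work is the annular term $I_2$. A direct Leibniz-rule expansion, parallel to the one behind $\om^{1}_{\de}$ in the previous lemma, yields
\begin{equation*}
\om_{\de} = \vp_{\de}\om + (1-\vp_{\de})\om_{0} + D\vp_{\de}\bigl[(v-v_{0})+(\chi_{0}\psi-\psi_{0}\chi)\bigr] + \vp_{\de}(1-\vp_{\de})\bigl[(\psi-\psi_{0})D(\chi-\chi_{0})-(\chi-\chi_{0})D(\psi-\psi_{0})\bigr],
\end{equation*}
together with $D\chi_{\de}=\vp_{\de}D\chi+(1-\vp_{\de})D\chi_{0}+D\vp_{\de}(\chi-\chi_{0})$ and the analogous formula for $D\psi_{\de}$. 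I would plug these expansions into $I_{2}$, open each square, and estimate the summands one at a time. The pieces \emph{without} $D\vp_{\de}$ are dominated by $|\om|^{2}+|\om_{0}|^{2}+|(\psi-\psi_{0})D(\chi-\chi_{0})|^{2}+|(\chi-\chi_{0})D(\psi-\psi_{0})|^{2}$ and by $|D\chi|^{2}+|D\chi_{0}|^{2}$, $|D\psi|^{2}+|D\psi_{0}|^{2}$, whose weighted integrals over the shrinking annulus vanish in the limit by DCT in view of the asymptotic integrability checks of Section \ref{asymptotic}.

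The main obstacle is controlling the error terms carrying the $D\vp_{\de}$ prefactor, since $|D\vp_{\de}|\le 2/\de$ introduces a large inverse power of $\de$ on an annulus of volume $\sim\de^{3}$. Smallness comes from the axial vanishing (\ref{axis condition}) combined with integration of (\ref{rate of Dv}), (\ref{rate of (om, chi, psi)}) and the extreme Kerr-Newman rates (\ref{decay of (om0, chi0, psi0)}) outward from $\rho=0$, which yields $|v-v_{0}|\lesssim \rho^{2}r^{\la-5}$ and $|\chi-\chi_{0}|,|\psi-\psi_{0}|\lesssim\rho^{2}r^{\la-4}$, and consequently $|\chi_{0}\psi-\psi_{0}\chi|\lesssim\rho^{2}r^{\la-4}$, near the origin. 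The archetypal estimate is
\begin{equation*}
\int_{\de<r<2\de}\frac{e^{4U}}{\rho^{4}}|D\vp_{\de}|^{2}(v-v_{0})^{2}\,dx \lesssim \int_{\de<r<2\de}\frac{r^{8}}{\rho^{4}}\cdot\frac{1}{\de^{2}}\cdot\rho^{4}r^{2\la-10}\,dx \lesssim \de^{2\la-1},
\end{equation*}
and the $D\vp_{\de}(\chi_{0}\psi-\psi_{0}\chi)$ term in $|\om_{\de}|^{2}$ contributes $O(\de^{2\la+1})$ by the same mechanism, while the $D\vp_{\de}(\chi-\chi_{0})$ and $D\vp_{\de}(\psi-\psi_{0})$ pieces of $D\chi_{\de}$, $D\psi_{\de}$ (now weighted by $e^{2U}/\rho^{2}\sim r^{4}/\rho^{2}$) give $O(\de^{2\la-1})$. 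Since $\la>3/2$, every such exponent of $\de$ is positive, so $I_{2}\to 0$ and $\I(\Psi_{\de})\to\I(\Psi)$.
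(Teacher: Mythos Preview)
Your proof is correct and follows essentially the same approach as the paper: the same three-region decomposition, the same Leibniz expansion of $\om_{\de}$, and the same use of $e^{4U}\sim r^{8}$ together with the near-origin asymptotics to control the $D\vp_{\de}$ error terms, yielding the decisive $\de^{2\la-1}$ bound. The only difference is that you spell out the $D\chi_{\de}$, $D\psi_{\de}$ and $I_{1}$ estimates explicitly, whereas the paper handles those by pointing back to Lemma~\ref{approx lemma2} and treating only the genuinely new $\frac{e^{4U}}{\rho^{4}}|\om_{\de}|^{2}$ term in detail.
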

\begin{proof}
By comparing to the proof of lemma \ref{approx lemma2}, the different term we need to handle is,
$$\int_{\de<r<2\de}\frac{e^{4U}}{\rho^{4}}|\om_{\de}|^{2}dx\rightarrow 0,$$
while
\begin{displaymath}
\begin{split}
\om_{\de} &=\vp_{\de}\om+(1-\vp_{\de})\om_{0}+\underbrace{D\vp_{\de}(v-v_{0})}_{\sim (1/\de)\rho^{2} r^{\la-5}} +\underbrace{D\vp_{\de}(\chi_{0}\psi-\psi_{0}\chi)}_{\sim (1/\de)\rho^{2}r^{\la-4}}\\
              &+\vp_{\de}(1-\vp_{\de})\underbrace{\{(\psi-\psi_{0})D(\chi-\chi_{0})-(\chi-\chi_{0})D(\psi-\psi_{0})\}}_{\sim \rho^{2}r^{2\la-7}},
\end{split}
\end{displaymath}
where the asymptotics come from (\ref{rate of Dv})(\ref{rate of chi, psi})(\ref{rate of (om, chi, psi)})(\ref{axis condition}). Convergence follows from the asymptotics and the fact that $\frac{e^{4U}}{\rho^{4}}\sim\frac{r^{8}}{\rho^{4}}$.
\end{proof}
\begin{remark}
The reason we can improve to $\la>\frac{3}{2}$ (weaker than \cite{ChrusCo}, \cite{Co}) is that $e^{4U}\sim r^{8}$ by (\ref{rate of U at 0}) is faster than $e^{4U_{0}}\sim r^{4}$ by (\ref{rate of U0 Dw0}), while we did not cut $U$ off near $0$.
\end{remark}

Now we can assume furthermore that $(v, \chi, \psi)=(v_{0}, \chi_{0}, \psi_{0})$ in $B_{\de}$. Define
$$(v_{\ep}, \chi_{\ep}, \psi_{\ep})=(v_{0}, \chi_{0}, \psi_{0})+\phi_{\ep}(v-v_{0}, \chi-\chi_{0}, \psi-\psi_{0}),$$
with $\phi_{\ep}$ defined in (\ref{cutoff function3}). Now $(v_{\ep}, \chi_{\ep}, \psi_{\ep})=(v_{0}, \chi_{0}, \psi_{0})$ in $\mC_{\de, \ep}\cup B_{\de}$. Denote $\Psi_{\ep}=(U, v_{\ep}, \chi_{\ep}, \psi_{\ep})$.

\begin{lemma} We have 
$\lim_{\ep\rightarrow 0}\I(\Psi_{\ep})=\I(\Psi)$.
\end{lemma}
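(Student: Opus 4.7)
The plan is to follow the pattern of Lemma \ref{approx lemma3} from the vacuum case, splitting $\R^3$ into the three regions $\mC_{\de,\ep}$, $\mW_{\de,\ep}$, and $\R^3\setminus(\mC_{\de,\ep}\cup\mW_{\de,\ep})$. On the complement, $\Psi_\ep\equiv\Psi$ by construction, so that contribution converges to $\I(\Psi)$ by DCT. On $\mC_{\de,\ep}$ the components $(v_\ep,\chi_\ep,\psi_\ep)$ coincide with $(v_0,\chi_0,\psi_0)$, so the integrand there is dominated by $|DU|^2 + \frac{e^{4U}}{\rho^4}|\om_0|^2 + \frac{e^{2U}}{\rho^2}(|D\chi_0|^2+|D\psi_0|^2)$. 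Using the axis asymptotics $|\om_0|=O(\rho^2)$, $|D\chi_0|,|D\psi_0|=O(\rho)$ from (\ref{rate of (om0, chi0, psi0)})--(\ref{decay of (om0, chi0, psi0)}) together with the boundedness of $U$ on the middle region $\de\le r\le 2/\de$, these integrands are bounded by an $L^1$ function, and since $|\mC_{\de,\ep}|\to 0$ as $\ep\to 0$ (with $\de$ fixed), this piece vanishes.

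The main work is on $\mW_{\de,\ep}$, where the cutoff derivative $D\phi_\ep$ enters. As in Lemma \ref{approx lemma3}, I expand
\begin{equation*}
\om_\ep = \phi_\ep\om + (1-\phi_\ep)\om_0 + D\phi_\ep(\dv + \chi_0\dps - \psi_0\dch) - \phi_\ep(1-\phi_\ep)(\dch D\dps - \dps D\dch),
\end{equation*}
and similarly $D\chi_\ep = \phi_\ep D\chi + (1-\phi_\ep)D\chi_0 + \dch\, D\phi_\ep$ (likewise for $\psi$). The axis condition (\ref{axis condition}) gives $\dv,\dch,\dps$ vanishing on $\Ga$, and integrating $|D\dv|=O(\rho)$ (from (\ref{axis decay of Dv})), $|D\dch|,|D\dps|=O(\rho)$ (from (\ref{decay at axis}) and $\Psi_0$-asymptotics) along radial lines perpendicular to $\Ga$ yields $|\dv|,|\dch|,|\dps|\lesssim \rho^2$ in the middle region. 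Combined with $|\chi_0|=O(\rho^2)$ and $|\psi_0|=O(1)$, all of $\dv$, $\chi_0\dps$, $\psi_0\dch$ are $O(\rho^2)$.

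With these bounds, the dangerous $D\phi_\ep$-terms satisfy $|D\phi_\ep(\dv+\chi_0\dps-\psi_0\dch)|^2 \lesssim \rho^4\cdot|D\phi_\ep|^2 \sim \rho^2/|\ln\ep|^2$, and similarly for $D\phi_\ep\cdot\dch$ and $D\phi_\ep\cdot\dps$. Multiplying by $e^{4U}/\rho^4$ or $e^{2U}/\rho^2$ (with $U$ bounded on $\mW_{\de,\ep}$) and integrating, one uses
\begin{equation*}
\int_{\mW_{\de,\ep}} \frac{dx}{\rho^2} \sim \int_\de^{2/\de}dz\int_\ep^{\sqrt{\ep}}\frac{d\rho}{\rho} \sim \frac{|\ln\ep|}{\de},
\end{equation*}
which is swallowed by the $1/|\ln\ep|^2$ prefactor, giving $O(1/|\ln\ep|)\to 0$. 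The remaining (non-cutoff-derivative) pieces, including $\phi_\ep(1-\phi_\ep)(\dch D\dps-\dps D\dch)$, are bounded by an $L^1$ function and vanish by DCT since $|\mW_{\de,\ep}|\to 0$. Summing the three regions gives the limit $\I(\Psi)$. The main obstacle compared to the vacuum case is the proliferation of cross terms in the expansion of $\om_\ep$ and $D\chi_\ep$, $D\psi_\ep$, but each is tamed by the same combination: axis vanishing of $\dv,\dch,\dps$ produces an extra $\rho^2$ factor that absorbs the singular $1/\rho^4$ weight and the $1/\rho^2$ growth of $|D\phi_\ep|^2$.
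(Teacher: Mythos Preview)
Your proposal is correct and follows essentially the same approach as the paper: the three-region decomposition, the expansion of $\om_\ep$ (your formula is algebraically identical to the paper's once one notes $\chi_0\psi-\psi_0\chi=\chi_0\dps-\psi_0\dch$), and the key estimate that the $D\phi_\ep$-terms pick up an extra $\rho^2$ from the axis vanishing of $\dv,\dch,\dps$, yielding the $O(1/|\ln\ep|)$ bound. You are in fact more explicit than the paper, which handles the $|D\chi_\ep|^2,|D\psi_\ep|^2$ terms only by pointing back to Lemma~\ref{approx lemma3}.
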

\begin{proof}
By comparing to the proof of lemma \ref{approx lemma3}, the additional term we need to handle is,
$$\int_{\mW_{\de, \ep}}\frac{e^{4U}}{\rho^{4}}|\om_{\ep}|^{2}dx\rightarrow 0,$$
while
\begin{displaymath}
\begin{split}
\om_{\ep} &=\phi_{\ep}\om+(1-\phi_{\ep})\om_{0}+\underbrace{D\phi_{\ep}(v-v_{0})}_{\sim (1/(\rho\ln\ep))\rho^{2}} +\underbrace{D\phi_{\ep}(\chi_{0}\psi-\psi_{0}\chi)}_{\sim (1/(\rho\ln\ep))\rho^{2}}\\
          &+\phi_{\ep}(1-\phi_{\ep})\underbrace{\{(\psi-\psi_{0})D(\chi-\chi_{0})-(\chi-\chi_{0})D(\psi-\psi_{0})\}}_{\sim \rho^{3}},
\end{split}
\end{displaymath}
where the asymptotics come from (\ref{axis decay of Dv})(\ref{axis decay of chi, psi})(\ref{decay at axis})(\ref{axis condition}). Convergence follows from these asymptotics.
\end{proof}
Combining the above three lemmas, we have proven Proposition \ref{approx prop2}.


\subsection{Convexity and gap inequality}

The proof of Theorem \ref{main theorem4} is very similar to that in Section \ref{convexity and gap inequality}. We will point out the main differences here. By Proposition \ref{approx prop2}, we can first take $(\dU, \dv, \dch, \dps)$ in (\ref{dU dv dch dps}) to satisfy: $(1)$ $\dU$ is compactly supported in $B_{2/\de}$; $(2)$ $(\dv, \dch, \dps)$ are compactly supported in $\Om_{\de, \ep}$, which is defined in (\ref{Omega de ep}).

Now we can connect $\ti{\Psi}=(u=U-\log\rho, v, \chi, \psi)$ to $\ti{\Psi}_{0}=(u_{0}=U_{0}-\log\rho, v_{0}, \chi_{0}, \psi_{0})$ by a geodesic family $\ti{\Psi}_{t}=(u_{t}, v_{t}, \chi_{t}, \psi_{t})$ on $(\CH^{2}, ds^{2}_{\CH})$. Denote $U_{t}=u_{t}+\log\rho$. We know that $\Psi_{t}\equiv\Psi_{0}$ outside $B_{2/\de}$. Then $(v_{t}, \chi_{t}, \psi_{t})\equiv(v_{0}, \chi_{0}, \psi_{0})$ in a neighborhood of $\mA_{\de, \ep}$ (defined in (\ref{mA de ep})). So $U_{t}=U_{0}+t\dU$ in a neighborhood of $\mA_{\de, \ep}$ as in Section \ref{convexity}. As in Lemma \ref{convexity2}, we have
\begin{lemma}\label{convexity3} The following inequality holds
\begin{equation}
\frac{d^{2}}{dt^{2}}\I(\Psi_{t})\geq 2\int_{\R^{3}}|D\big(d_{\CH}(\Psi, \Psi_{0})\big)|^{2}dx.
\end{equation}
\end{lemma}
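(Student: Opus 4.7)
The plan is to mirror the proof of Lemma~\ref{convexity2}, using the relation (\ref{I and E}) between $\I$ and the full Dirichlet energy $E$ of maps to $\CH^{2}$. Since $\Psi_{t}\equiv\Psi_{0}$ outside $B_{2/\de}$, the integral over $\R^{3}$ is really an integral over $B_{2/\de}=\Om_{\de,\ep}\cup\mA_{\de,\ep}$, so I would split
\begin{equation*}
\frac{d^{2}}{dt^{2}}\I(\Psi_{t}) = \frac{d^{2}}{dt^{2}}\I_{\Om_{\de,\ep}}(\Psi_{t}) + \frac{d^{2}}{dt^{2}}\I_{\mA_{\de,\ep}}(\Psi_{t})
\end{equation*}
and bound each piece below by $2\int|D d_{\CH}(\Psi,\Psi_{0})|^{2}$ over the corresponding region.

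For the $\Om_{\de,\ep}$ piece, I would apply (\ref{I and E}) to write $\I_{\Om_{\de,\ep}}(\Psi_{t}) = E_{\Om_{\de,\ep}}(\ti\Psi_{t}) + \int_{\partial\Om_{\de,\ep}} \tfrac{\partial\log\rho}{\partial n}(2U_{t}+\log\rho)\, d\si$. In the boundary integral only $U_{t}=U_{0}+t\dU$ depends on $t$, and it does so linearly, so the second $t$-derivative of the boundary contribution vanishes, exactly as in (\ref{first part of second variation of M}). Since $\CH^{2}$ has non-positive sectional curvature and $\ti\Psi_{t}$ is a geodesic deformation, the refined second variation formula (\ref{refined second variation for E}) applied to $E_{\Om_{\de,\ep}}(\ti\Psi_{t})$ yields
\begin{equation*}
\frac{d^{2}}{dt^{2}}\I_{\Om_{\de,\ep}}(\Psi_{t}) \geq 2\int_{\Om_{\de,\ep}} |\nabla d_{\CH}(\Psi,\Psi_{0})|^{2}\, dx.
\end{equation*}

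For the $\mA_{\de,\ep}$ piece, the key observation parallels (\ref{geodesic near axis})--(\ref{geodesic for x}): in a neighborhood of $\mA_{\de,\ep}$ we have $(v,\chi,\psi)\equiv(v_{0},\chi_{0},\psi_{0})$, so the $\CH^{2}$-geodesic from $\ti\Psi_{0}$ to $\ti\Psi$ lies in the slice $\{v=v_{0},\chi=\chi_{0},\psi=\psi_{0}\}$. This slice is totally geodesic because $g_{uu}\equiv 1$ and $g_{u\bullet}\equiv 0$ force $\Ga^{v}_{uu}=\Ga^{\chi}_{uu}=\Ga^{\psi}_{uu}=\Ga^{u}_{uu}=0$. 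Hence on $\mA_{\de,\ep}$, $u_{t}=u_{0}+t\dU$, the other coordinates are $t$-independent, and $d_{\CH}(\Psi,\Psi_{0})=|\dU|$. A direct differentiation of
\begin{equation*}
\I_{\mA_{\de,\ep}}(\Psi_{t}) = \int_{\mA_{\de,\ep}} |DU_{t}|^{2} + \frac{e^{4U_{t}}}{\rho^{4}}|\om_{0}|^{2} + \frac{e^{2U_{t}}}{\rho^{2}}\bigl(|D\chi_{0}|^{2}+|D\psi_{0}|^{2}\bigr)\, dx
\end{equation*}
then produces, after two $t$-derivatives, $2|D\dU|^{2}$ plus two manifestly non-negative curvature-type terms $16(\dU)^{2}\tfrac{e^{4U_{t}}}{\rho^{4}}|\om_{0}|^{2}$ and $4(\dU)^{2}\tfrac{e^{2U_{t}}}{\rho^{2}}(|D\chi_{0}|^{2}+|D\psi_{0}|^{2})$. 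Dropping the last two gives the required bound $\geq 2\int_{\mA_{\de,\ep}}|D\dU|^{2}\, dx = 2\int_{\mA_{\de,\ep}}|D d_{\CH}(\Psi,\Psi_{0})|^{2}\, dx$.

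The main technical point will be justifying the interchange of $\frac{d^{2}}{dt^{2}}$ with the integrals, in particular over $\mA_{\de,\ep}=B_{\de}\cup\mC_{\de,\ep}$, where the integrands contain the blow-up factors $\rho^{-4},\rho^{-2}$. As in the verification after (\ref{second part of second variation of M}), one checks uniform-in-$t$ integrability using the asymptotic estimates in Section~\ref{asymptotic}: on $\mC_{\de,\ep}$ the bounds $|\om_{0}|\sim\rho^{2}$ and $|D\chi_{0}|,|D\psi_{0}|\sim\rho$ kill the negative powers of $\rho$; on $B_{\de}$ the decay $e^{4U_{t}}\sim r^{4(1+t)}$ and $e^{2U_{t}}\sim r^{2(1+t)}$ together with $|\om_{0}|_{\de}\leq C\rho^{2}r^{-3}$, $|D\chi_{0}|,|D\psi_{0}|\leq C\rho r^{-2}$ from (\ref{decay of (om0, chi0, psi0)}) leave only mild logarithmic singularities that are uniformly integrable for $t\in[0,1]$. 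Once these exchanges are granted, summing the two pieces and extending by zero outside $B_{2/\de}$ (where $\Psi_{t}\equiv\Psi_{0}$ and both sides vanish) yields the stated inequality.
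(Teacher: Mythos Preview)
Your proposal is correct and follows essentially the same approach as the paper: split $B_{2/\de}$ into $\Om_{\de,\ep}$ and $\mA_{\de,\ep}$, handle the first piece via (\ref{I and E}) and the refined second variation formula (\ref{refined second variation for E}), handle the second piece by direct computation using $U_{t}=U_{0}+t\dU$ and $d_{\CH}(\Psi,\Psi_{0})=|\dU|$, and justify the interchange of $\tfrac{d^{2}}{dt^{2}}$ with the integral via the asymptotics (\ref{decay of (om0, chi0, psi0)}). Your explicit Christoffel-symbol argument for why the $u$-slice is totally geodesic and your second-derivative coefficients $16$ and $4$ (the paper writes $8$ and $2$, but yours are the correct ones; it is immaterial since these nonnegative terms are discarded) are the only cosmetic differences.
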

\begin{proof}
\begin{equation}
\begin{split}
\frac{d^{2}}{dt^{2}}\I(\Psi_{t}) &=\frac{d^{2}}{dt^{2}}\I_{B_{2/\de}}(\Psi_{t})\\
                                 &=\underbrace{\frac{d^{2}}{dt^{2}}\I_{\Om_{\de, \ep}}(\Psi_{t})}_{I_{1}} +\underbrace{\frac{d^{2}}{dt^{2}}\I_{\mA_{\de, \ep}}(\Psi_{t})}_{I_{2}}.
\end{split}
\end{equation}
Using formula (\ref{I and E}), the fact that $(\CH^{2}, ds_{\CH}^{2})$ is negatively curved and (\ref{refined second variation for E}), the first part is calculated as:
\begin{equation}
\begin{split}
I_{1} & =\frac{d^{2}}{dt^{2}}E_{\Om_{\de, \ep}}(\ti{\Psi}_{t})+\frac{d^{2}}{dt^{2}}\int_{\partial\Om_{\de, \ep}\cap\partial\mA_{\de, \ep}}\frac{\partial\log\rho}{\partial n}(2(U_{0}+t\dU)+\log\rho)d\si\\
      & \geq 2\int_{\Om_{\de, \ep}}|D\big(d_{\CH}(\Psi, \Psi_{0})\big)|^{2}dx.
\end{split}
\end{equation}
Since $d_{\CH}(\Psi, \Psi_{0})=|\dU|$ on $\mA_{\de, \ep}$, the second part is calculated as:
\begin{equation}
\begin{split}
I_{2} & =\frac{d^{2}}{dt^{2}}\int_{\mA_{\de, \ep}}|D(U_{0}+t\dU)|^{2}+\frac{e^{4(U_{0}+t\dU)}}{\rho^{4}}|\om_{0}|^{2}+\frac{e^{2(U_{0}+t\dU)}}{\rho^{2}}(|D\chi_{0}|^{2}+|D\psi_{0}|^{2})dx\\
      & =2\int_{\mA_{\de, \ep}}|D\dU|^{2}+8(\dU)^{2}\frac{e^{4(U_{0}+t\dU)}}{\rho^{4}}|\om_{0}|^{2}+2(\dU)^{2}\frac{e^{2(U_{0}+t\dU)}}{\rho^{2}}(|D\chi_{0}|^{2}+|D\psi_{0}|^{2})dx\\
      & \geq 2\int_{\mA_{\de, \ep}}|D\big(d_{\CH}(\Psi, \Psi_{0})\big)|^{2}dx.
\end{split}
\end{equation}
Now the reason that we can take $\frac{d^{2}}{dt^{2}}$ into the integral in the second $``="$ follows from the same idea as in the proof of \ref{convexity2}, making use of (\ref{decay of (om0, chi0, psi0)}). For example, $\underbrace{(\dU)^{2}\frac{e^{4(U_{0}+t\dU)}}{\rho^{4}}|\om_{0}|^{2}}_{\sim(\log r)^{2}\frac{r^{4(1+t)}}{\rho^{4}}\rho^{4} r^{-6}}\sim(\log r)^{2}r^{-2}$ is uniformly integrable near $0$. Other terms follow similarly. We have proven the lemma.
\end{proof}

Using the idea in Lemma \ref{vanishing of first variation2}, while using the fact that $\Psi_{0}$ satisfies the Euler-Lagrange equation for $\I$, we can easily get the following result. We omit the proof here since it is almost the same as Lemma \ref{vanishing of first variation2}.
\begin{lemma}\label{vanishing of first variation3} At $t=0$ we have
$\frac{d}{dt}\big{|}_{t=0}\I(\Psi_{t})=0$.
\end{lemma}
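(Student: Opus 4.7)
The plan is to follow the template of Lemma \ref{vanishing of first variation2} verbatim, with bookkeeping extended from two to four components. Denoting the initial geodesic velocity at $t=0$ by $(U_0', v_0', \chi_0', \psi_0')$, the first variation of $\I$ reads
\begin{align*}
\frac{d}{dt}\Big|_{t=0}\I(\Psi_t) = \int_{\R^3} \Big[& 2\lan DU_0, DU_0'\ran + 4U_0'\tfrac{e^{4U_0}}{\rho^4}|\om_0|^2 + 2\tfrac{e^{4U_0}}{\rho^4}\lan\om_0,\om_0'\ran \\
&+ 2U_0'\tfrac{e^{2U_0}}{\rho^2}(|D\chi_0|^2 + |D\psi_0|^2) \\
&+ 2\tfrac{e^{2U_0}}{\rho^2}(\lan D\chi_0,D\chi_0'\ran + \lan D\psi_0,D\psi_0'\ran)\Big]\,dx.
\end{align*}
By Proposition \ref{approx prop2} I may assume $(v,\chi,\psi) \equiv (v_0,\chi_0,\psi_0)$ in a neighborhood of $\mA_{\de,\ep}$. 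There the geodesic $\Psi_t$ reduces to $U_t = U_0 + t\,\dU$ with $(v_t,\chi_t,\psi_t)\equiv(v_0,\chi_0,\psi_0)$, so $v_0' = \chi_0' = \psi_0' = \om_0' \equiv 0$ and $U_0' = \dU$ in a neighborhood of $\mA_{\la,\mu}$ for $\la\ll\de$, $\mu\ll\ep$.

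I then split $\R^3 = \Om_{\la,\mu}\cup\mA_{\la,\mu}$. On $\Om_{\la,\mu}$ everything is smooth; integration by parts combined with the Euler-Lagrange equations for $\I$ at $\Psi_0$ (the extreme Kerr-Newman map is a critical point of $\I$; see \cite{We}) makes the bulk integrand vanish and leaves a boundary integral on $\partial\Om_{\la,\mu}\cap\partial\mA_{\la,\mu}$. Since $v_0', \chi_0', \psi_0'$ vanish there, only the $2(\partial U_0/\partial n)\,\dU$ contribution survives, exactly as in Lemma \ref{vanishing of first variation2}. The volume integral on $\mA_{\la,\mu} = B_\la \cup \mC_{\la,\mu}$ is the corresponding piece of the original integrand, evaluated under $U_0' = \dU$ and $v_0'=\chi_0'=\psi_0'=0$.

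Finally I send $\mu\to 0$, then $\la\to 0$. The $|DU_0|$ contribution and the $(e^{4U_0}/\rho^4)|\om_0|^2$ contribution are handled verbatim as the terms $I_1,\ldots,I_4$ in the proof of Lemma \ref{vanishing of first variation2}, using (\ref{rate of U at 0}) and (\ref{decay of (om0, chi0, psi0)}). The new ingredient is the charge contribution $(e^{2U_0}/\rho^2)(|D\chi_0|^2+|D\psi_0|^2)\cdot\dU$ on $B_\la$. Using $e^{2U_0}\sim r^2$ and $|D\chi_0|,|D\psi_0| \sim \rho\,r^{-2}$ near $0$ from (\ref{decay of (om0, chi0, psi0)}), together with $|\dU|\leq C|\log r|$ from (\ref{rate of U at 0}), this integrand is dominated by $C|\log r|\,r^{-2}$, which is uniformly integrable near the origin, so the $B_\la$ contribution tends to $0$. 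The main technical obstacle is the bookkeeping of the boundary contributions at $\partial B_\la$ and $\partial\mC_{\la,\mu}$ and exhibiting uniform integrable majorants that permit the interchange of $d/dt$ and the integral; no conceptually new idea is needed beyond the vacuum computation.
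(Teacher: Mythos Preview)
Your proposal is correct and follows exactly the approach the paper indicates: the paper itself omits the proof of this lemma, stating only that it uses the same idea as Lemma~\ref{vanishing of first variation2} together with the fact that $\Psi_{0}$ satisfies the Euler--Lagrange equation for $\I$. Your write-up faithfully carries out that template, including the correct handling of the additional charge term $(e^{2U_0}/\rho^2)(|D\chi_0|^2+|D\psi_0|^2)\,\dU$ via the $|\log r|\,r^{-2}$ bound near the origin.
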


\noindent\textbf{Proof of Theorem \ref{main theorem4}:}
The proof follows exactly the same idea as the proof of Theorem \ref{main theorem2} by using Proposition \ref{approx prop2}, Lemma \ref{convexity3}, and Lemma \ref{vanishing of first variation3}. We leave details to the reader.


\parindent 0ex
Department of Mathematics, Stanford University, building 380\\
Stanford, California 94305\\
E-mail: schoen$@$math.stanford.edu, xzhou08$@$math.stanford.edu.


\begin{thebibliography}{99}
\addtolength{\itemsep}{-0.7em}
\small

\bibitem{Brill} D. Brill, On the positive definite mass of the Bondi-Weber-Wheeler
time-symmetric gravitational waves, Ann. Phys. 7, (1959), 466-483.
\bibitem{Chrus1} P. T. Chru\'sciel, Mass and Angular-Momentum Inequalities for Axi-Symmetric Initial Data Sets. I. Positivity of Mass, Ann. Phys. 323 (2008)2566-2590.
\bibitem{ChrusCo} P. T. Chru\'sciel and J. L. Costa, Mass, Angular-Momentum and Charge Inequalities for Axisymmetric Initial Data. Class. Quant. Grav. 26(2009)235013.
\bibitem{ChrusLiWe} P. T. Chru\'sciel, Y. Li and G. Weinstein, Mass and Angular-Momentum Inequalities for Axi-Symmetric Initial Data Sets. II. Angular Momentum. Ann. Phys. 323 (2008)2591-2613.
\bibitem{ChrusN} P. T. Chru\'sciel and L. Nguyen, A Uniqueness Theorem for Degenerate Kerr-Newman Black Holes. Ann. Henri Poincar\'e 11(2010), 585-609.
\bibitem{Co} J. L. Costa, Proof of a Dain Inequality with Charge. J. Phys. A: Math. Theor. 43(2010)285202.
\bibitem{D} S. Dain, Proof of the Angular Momentum-Mass Inequality for Axisymmetric Black Hole. J. Differential Geom. 79(2008)33-67.
\bibitem{D2} S. Dain, A Variational Principle for Stationary, axisymmetric solutions of Einstein's euqations. Class. Quant. Grav. 23(2006)6857-6871.
\bibitem{E} L. Evans, Partial Differential Equations. Graduate Studies in Mathematics, volume 19, American Mathematical Society, 1998.
\bibitem{S} R. Schoen, Analytic Aspect of Harmonic Maps, Seminar on Nonlinear PDE, MSRI
Publication, edited by S. S. Chern, Springer-Verlag, 1984, 321--358.
\bibitem{We} G. Weinstein, N-black hole stationary and axially symmetric solutions of the Einstein/Maxwell equations. Commun. Part. Diff. Eqs. 21(1996)1389-1430.


\end{thebibliography}
\end{document}